\tikzstyle{vertex}=[circle, draw, fill=black!50,
\tikzset{->-/.style={decoration={
			markings,
			mark=at position .5 with {\arrow{>}}},postaction={decorate}}}
\tikzstyle{bigblue}=[color=blue, very thick, >=stealth]
\tikzstyle{lightblue}=[color=blue, thin, >=stealth]
\tikzstyle{bigred}=[color=red, very thick, >=stealth]
\tikzstyle{lightred}=[color=red, thin, >=stealth]
\tikzstyle{biggreen}=[color=black!30!green, very thick, >=stealth]
\tikzstyle{lightgreen}=[color=black!30!green,  thin, >=stealth]
\tikzstyle{vertex}=[circle, draw, fill=black!50,
\tikzset{->-/.style={decoration={
			markings,
			mark=at position .5 with {\arrow{>}}},postaction={decorate}}}
\tikzstyle{bigblue}=[color=blue, very thick, >=stealth]
\tikzstyle{lightblue}=[color=blue, thin, >=stealth]
\tikzstyle{bigred}=[color=red, very thick, >=stealth]
\tikzstyle{lightred}=[color=red, thin, >=stealth]
\tikzstyle{biggreen}=[color=black!30!green, very thick, >=stealth]
\tikzstyle{lightgreen}=[color=black!30!green,  thin, >=stealth]
\newtheorem{thm}{Theorem}
\newtheorem{lem}{Lemma}
\newtheorem{ob}{Observation}
\newtheorem{claim}{Claim}
\newtheorem{Constrution}{Construction}
\newtheorem{Pro}{Proposition}
\newtheorem{cor}{Corollary}
\title{\bf The degree and codegree threshold for generalized triangle and some trees covering}
\author{
	\small  Ran Gu$^1$,  
	Shuaichao Wang$^2$\\
	\small $^1$College of Science, Hohai University,\\
	\small Nanjing, Jiangsu Province 210098,
	P.R. China\\
	\small $^2$Center for Combinatorics and LPMC \\
	\small Nankai University, Tianjin 300071, China \\
	\small Emails: rangu@hhu.edu.cn;wsc17746316863@163.com
	\\
	\date{}}
\newcommand{\Rmnum}[1]{\expandafter@slowromancap\romannumeral #1@}
\begin{document}
	\maketitle
	\begin{abstract}
		Given two $k$-uniform hypergraphs $F$ and $G$, we say that $G$ has an $F$-covering if for every vertex in $G$ there is a  copy of $F$ covering it. For $1\leq i\leq k-1$, the minimum $i$-degree $\delta_i(G)$ of $G$ is the minimum integer such that every $i$ vertices are contained in at least $\delta_i(G)$ edges. Let $c_i(n,F)$ be the largest minimum $i$-degree among all $n$-vertex $k$-uniform hypergraphs that  have no $F$-covering. In this paper, we consider the $F$-covering problem in $3$-uniform hypergraphs when $F$ is the generalized triangle $T$, where $T$ is a $3$-uniform hypergraph with the vertex set $\{v_1,v_2,v_3,v_4,v_5\}$ and the edge set $\{\{v_{1}v_{2}v_{3}\},\{v_{1}v_{2}v_{4}\},\{v_{3}v_{4}v_{5}\}\}$. We give the exact value of $c_2(n,T)$ and asymptotically determine $c_1(n,T)$. We also consider the $F$-covering problem in $3$-uniform hypergraphs when $F$ are some trees, such as the linear $k$-path $P_k$ and the star $S_k$. Especially, we provide bounds of $c_i(n,P_k)$ and $c_i(n,S_k)$ for $k\geq 3$, where $i=1,2$.
		\\[2mm]

		\noindent\textbf{Keywords:}  $3$-graphs; Covering; Extremal;\\
		[2mm] \textbf{AMS Subject Classification (2020):} 05C35, 05C22
	\end{abstract}

	\section{Introduction}
	Let $k$ be  an integer with $k\geq 2$. We say a $k$-uniform hypergraph, or a $k$-graph, is a pair $G=(V,E)$, where $V$ is a set of vertices and $E$ is a collection of $k$-subsets of $V$. When $k=2$, the $k$-graph is the simple graph. We simply denote $2$-graph by graph. Let $G=(V,E)$ be a $k$-graph. For any $S\subset V(G)$, let the neighborhood $N_G(S)$ of $S$ be $\{T\subset V(G)\setminus S:T\cup S\in E(G)\}$ and the degree $d_G$ of $S$ be $|N_G(S)|$. For $1\leq i \leq k-1$, we denote the \emph{minimum $i$-degree} of $G$ by $\delta_i(G)$,  which is the minimum of $d_G(S)$ over all $S\in \binom{V(G)}{i}$. We call $\delta_1(G)$ the \emph{minimum degree} of $G$ and $\delta_{k-1}(G)$ the \emph{minimum codegree} of $G$. When $|S|=k-1$, we also call the vertex in $N_G(S)$ the co-neighbor of $S$. For a vertex $x$ in $V$, we define the link graph $G_x$ to be a $(k-1)$-graph with the vertex set $V(G)\setminus \{x\}$ and the edge set $N_G(\{x\})$.\par 
	Given a $k$-graph $F$, we say a $k$-graph $G$ has an $F$-covering if for any vertex of $G$, we can find a 
	copy of $F$ containing it. For $1\leq i \leq k-1$, define
	$$c_i(n,F)=\max\{\delta_i(G): \text{$G$ is a $k$-graph on $n$ vertices with no $F$-covering}\}.$$
	and call $c_1(n,F)$ the $F$-covering degree-threshold and $c_{k-1}(n,F)$ the $F$-covering codegree-threshold.\par 
	For graphs $F$, 
	Han, Zang and Zhao introduced the $F$-covering problem in \cite{ZM} and showed that $c_1(n,F)=(\frac{\chi(F)-2}{\chi(F)-1}+o(1))n$ where $\chi(F)$ is the chromatic number of $F$. Falgas-Ravry and Zhao \cite{FZC} initiated the study of the $F$-covering problem in 3-graphs. For $n\geq k$, let $K_n^k$ denote the complete $k$-graph on $n$ vertices and $K_n^{k-}$ denote the $k$-graph by removing one edge from $K_n^k$. In \cite{FZC}, Falgas-Ravry and Zhao determined the exact value of $c_2(n,K_4^3)$ for $n> 98$ and gave bounds of $c_2(n,F)$ when $F$ is $K_4^{3-}$, $K_5^{3}$ or the tight cycle $C_5^3$ on $5$ vertices. Yu, Hou, Ma and Liu in \cite{YHMLE} gave the exact value of $c_2(n,K_4^{3-})$, $c_2(n,K_5^{3-})$ and showed that $c_2(n,K_4^{3-})=\lfloor \frac{n}{3}\rfloor$, $c_2(n,K_5^{3-})=\lfloor \frac{2n-2}{3}\rfloor$. Soon after that, Falgas-Ravry, Markstr\"{o}m, and Zhao in  \cite{FMZT} gave near optimal bounds of $c_1(n,K_4^3)$ and asymptotically determined  $c_1(n,K_4^{3-})$. Recently, Tang, Ma and Hou in \cite{TMH} determined the exact value of $c_2(n,C_6^3)$ and an asymptotic optimal value of $c_1(n,C_6^3)$. There are some other related results in literature, for example in \cite{FTDR},\cite{FZS}.\par 
	In this paper, we also focus on  $3$-graphs.  Let the generalized triangle $T$ be a $3$-graph with the vertex set $\{v_1,v_2,v_3,v_4,v_5\}$ and the edge set $\{\{v_1v_2v_3\},\{v_1v_2v_4\},\{v_3\\v_4v_5\}\}$. We determine the exact value of $c_2(n,T)$ and give the bounds of $c_1(n,T)$. What's more, let $G$ be a graph and fix a vertex $u$ in $V(G)$. If $u$ is covered by a generalized triangle, then there are three possible positions for $u$ to have, see Figure \ref{fig:Tcover}. We denote these three ways by $T^1$, $T^2$ and $T^3$. We give the upper bound of $\delta_1(G)$ guaranteeing that every vertex in $V(G)$ is contained in $T^1$, $T^2$ and $T^3$. The main results on generalized triangle are as follows.

	\begin{figure}[!htbp]
		\centering
		\begin{subfigure}[!htbp]{.32\textwidth}
			\centering
			\begin{tikzpicture}[scale=0.6]
				
				\draw [rotate around={0:(5,0)},line width=1pt] (5,0) ellipse (3.3cm and 0.7cm);
				\draw [rotate around={37:(6,-1.5)},line width=1pt] (6,-1.5) ellipse (2.7cm and 0.7cm);
				\draw [rotate around={-37:(3.9,-1.4)},line width=1pt] (3.9,-1.4) ellipse (2.7cm and 0.7cm);
				
				\begin{scriptsize}
					\draw [fill=blue] (8,0)  circle (3.5pt);
					\draw [fill=blue] (2,0) circle (3.5pt);
					\draw [fill=red] (5,0) node [above] {\large $u$} circle (3.5pt);
					\draw [fill=blue] (4.6,-2.4) circle (3.5pt);
					\draw [fill=blue] (5.4,-2.4) circle (3.5pt);
				\end{scriptsize}
			\end{tikzpicture}
			\caption{$u$ is contained in $T^1$}
		\end{subfigure}	
		\begin{subfigure}[!htbp]{.32\textwidth}
			\centering
			\begin{tikzpicture}[scale=0.6]
				
				\draw [rotate around={0:(5,0)},line width=1pt] (5,0) ellipse (3.3cm and 0.7cm);
				\draw [rotate around={37:(6,-1.5)},line width=1pt] (6,-1.5) ellipse (2.7cm and 0.7cm);
				\draw [rotate around={-37:(3.9,-1.4)},line width=1pt] (3.9,-1.4) ellipse (2.7cm and 0.7cm);
				
				\begin{scriptsize}
					\draw [fill=red] (8,0)node [above] {\large $u$}  circle (3.5pt);
					\draw [fill=blue] (2,0) circle (3.5pt);
					\draw [fill=blue] (5,0)  circle (3.5pt);
					\draw [fill=blue] (4.6,-2.4) circle (3.5pt);
					\draw [fill=blue] (5.4,-2.4) circle (3.5pt);
				\end{scriptsize}
			\end{tikzpicture}
			\caption{$u$ is contained in $T^2$}
		\end{subfigure}	
		\begin{subfigure}[!htbp]{.32\textwidth}
			\centering
			\begin{tikzpicture}[scale=0.6]
				
				\draw [rotate around={0:(5,0)},line width=1pt] (5,0) ellipse (3.3cm and 0.7cm);
				\draw [rotate around={37:(6,-1.5)},line width=1pt] (6,-1.5) ellipse (2.7cm and 0.7cm);
				\draw [rotate around={-37:(3.9,-1.4)},line width=1pt] (3.9,-1.4) ellipse (2.7cm and 0.7cm);
				
				\begin{scriptsize}
					\draw [fill=blue] (8,0)  circle (3.5pt);
					\draw [fill=blue] (2,0) circle (3.5pt);
					\draw [fill=blue] (5,0)  circle (3.5pt);
					\draw [fill=red] (4.6,-2.4) node [above] {\large $u$} circle (3.5pt);
					\draw [fill=blue] (5.4,-2.4) circle (3.5pt);
				\end{scriptsize}
			\end{tikzpicture}
			\caption{$u$ is contained in $T^3$ }
		\end{subfigure}	
		\caption{Different positions of $u$ in a  generalized triangle }\label{fig:Tcover}
	\end{figure}
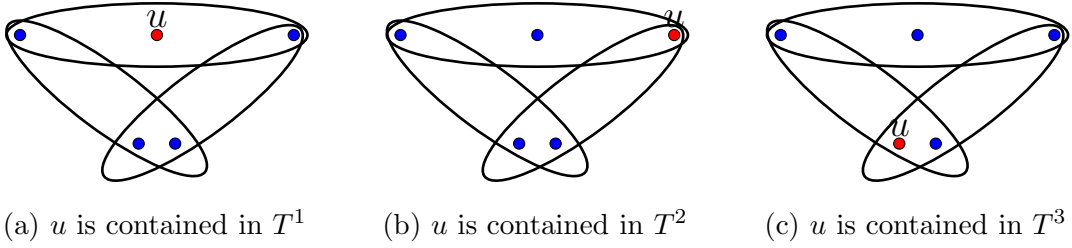
	
	\begin{thm}\label{ttt2}
		For $n \geq 5$, we have:
		$$c_2(n,T)=
		\begin{cases}
			1, \text{when}~ n\in [5,10]\\
			2, \text{when}~ n\geq 11~ and~ n-1 \equiv 0 \pmod {3}\\
			1, \text{when}~n\geq 11~ and~  n-1 \equiv 1,2 \pmod {3}\\
		\end{cases}.$$
	\end{thm}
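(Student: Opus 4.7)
My plan is to establish matching upper and lower bounds on $c_2(n,T)$ via a local forcing argument around an uncovered vertex and explicit constructions. I begin with the easy universal bound $c_2(n,T) \leq 2$. Assuming $\delta_2(G) \geq 3$, any vertex $u$ is contained in a copy of $T$: pick any $v_2 \neq u$, choose two co-neighbors $v_3, v_4$ of $\{u, v_2\}$, and then some co-neighbor $v_5$ of $\{v_3, v_4\}$ avoids $\{u, v_2\}$, so $\{u v_2 v_3,\, u v_2 v_4,\, v_3 v_4 v_5\}$ is a copy of $T$ with $u$ at position $v_1$. To sharpen to $c_2(n, T) \leq 1$ in the stated cases, I assume $\delta_2(G) \geq 2$ and that some $u$ lies in no copy of $T$. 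The same forcing, now with $\delta_2 \geq 2$, yields for every $v_2 \neq u$ and every pair $\{v_3, v_4\} \subseteq N_G(\{u, v_2\})$ the equality $N_G(\{v_3, v_4\}) = \{u, v_2\}$, so $\{u, v_2, v_3, v_4\}$ spans $K_4^3$ in $G$. A symmetric application of the same forcing (taking $v_3, v_4, v_4' \in N_G(\{u, v_2\})$ and re-applying with $v_3$ in the role of $v_2$ to get $N_G(\{v_4, v_4'\}) \subseteq \{u, v_3\}$, contradicting $N_G(\{v_4, v_4'\}) = \{u, v_2\}$) forces $|N_G(\{u, v_2\})| = 2$ for every $v_2$, so the $K_4^3$-cliques through $u$ partition $V(G) \setminus \{u\}$ into triples $A_1, \ldots, A_k$. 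Hence $n - 1 = 3k$, so whenever $n - 1 \not\equiv 0 \pmod 3$ we obtain a contradiction, giving $c_2(n, T) \leq 1$.

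For the boundary cases $n \in \{7, 10\}$, where the partition is numerically possible, I would further analyse which extra edges $G$ can carry. No new edge through $u$ is allowed (it would force $|N_G(\{u,v_2\})| \geq 3$), so additional edges lie in $V(G) \setminus \{u\}$. Any type-$(2, 1)$ cross-edge $\{v_1, v_2, v_4\}$ with $v_1, v_2 \in A_i$ and $v_4 \in A_j$ produces a copy of $T$ with $u$ at position $v_3$ via $\{v_1 v_2 u,\, v_1 v_2 v_4,\, u v_4 v_5\}$ (using any third vertex of $A_j$), and any two type-$(1, 1, 1)$ cross-edges sharing a pair with third vertices in the same $A_l$ produce a copy of $T$ with $u$ at position $v_5$. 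Hence every cross-pair's co-neighbors must be distinct across the $k - 2$ remaining parts, which is incompatible with $\delta_2(G) \geq 2$ when $k \in \{2, 3\}$; this completes $c_2(n, T) \leq 1$ for $n \in [5, 10]$.

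For the lower bounds, I observe that for any $n \geq 5$ the ``star'' $3$-graph $G^* = \{\{a\} \cup p : p \in \binom{V \setminus \{a\}}{2}\}$ centred at any vertex $a$ has $\delta_2(G^*) = 1$ and contains no copy of $T$ at all, since $T$ has a vertex of degree one (namely $v_5$), so no single vertex can lie in all three edges of a $T$-copy. This gives $c_2(n, T) \geq 1$. For $n \geq 13$ with $n - 1 \equiv 0 \pmod 3$, I would realise the upper-bound forcing picture as a construction: fix $u$, partition $V \setminus \{u\}$ into triples $A_1, \ldots, A_k$ with $k = (n-1)/3 \geq 4$, take all $3$-subsets of each $\{u\} \cup A_i$ (making each such $4$-set a $K_4^3$), and add a family of type-$(1, 1, 1)$ cross-edges drawn from a suitable transversal design so that every cross-pair has exactly two co-neighbors spread across two distinct third parts. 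Routine checks give $\delta_2(G^*) = 2$, and the upper-bound analysis run in reverse shows that $u$ lies in no copy of $T$.

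\textbf{Main obstacle.} The principal difficulty is the structural forcing: squeezing from ``$u$ is not $T$-covered'' and $\delta_2(G) \geq 2$ the rigid $K_4^3$-partition of $V(G) \setminus \{u\}$, and then trading off the cross-pair codegree against the number of parts $k$ to nail the small-$n$ boundary at $n \in \{7, 10\}$. Once these are in hand, the transversal-design construction for the lower bound is elementary combinatorics, and the lower bound $c_2(n, T) \geq 1$ via the star construction is immediate.
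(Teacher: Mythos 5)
Your proposal is correct, and while its skeleton (the $\delta_2\geq 3$ greedy embedding, the forcing $N_G(\{v_3,v_4\})=\{u,v_2\}$ that turns $\{u,v_2\}\cup N_G(\{u,v_2\})$ into a $K_4^3$, and the two extremal constructions) coincides with the paper's, you close the range $n\in[5,10]$ by a genuinely different route. The paper only pushes the $K_4^3$-partition of $V(G)\setminus\{u\}$ to completion for $n\geq 11$; for $n\in[5,10]$ it instead takes one forced $K_4^3$ on $\{u,y,p,q\}$, restricts the link graphs of $y,p,q$ to the remaining $n-4$ vertices, and uses the pigeonhole bound $3(n-4)>\binom{n-4}{2}$ (valid exactly for $n\leq 10$) to find a common link edge and hence a copy of $T$. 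You instead run the partition argument uniformly for all $n$, conclude $3\mid n-1$ (which immediately kills $n\in\{5,6,8,9\}$ as well as the $n\geq 11$, $n-1\equiv 1,2$ cases in one stroke), and then finish $n\in\{7,10\}$ by showing that with only $k\in\{2,3\}$ parts a cross-pair cannot have two legal co-neighbors; your verifications of the forbidden type-$(2,1)$ and repeated-third-part configurations are sound. Your version buys a more unified structural picture and explains \emph{why} the threshold jumps exactly when $3\mid n-1$ and $k\geq 4$; the paper's counting step is shorter but opaque about the boundary. Two minor points to repair in a write-up: your justification that the star construction contains no $T$ should be that the three edges of $T$ have empty common intersection (every vertex of $T$ misses some edge), not merely that $T$ has a degree-one vertex; and the lower-bound construction for $n\geq 13$ should be made explicit (the paper uses a Latin-square-type arrangement on each triple of parts giving each cross-pair exactly one co-neighbour in every other part), since the existence of your ``suitable transversal design'' is asserted rather than exhibited.
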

	
	\begin{thm} \label{ccc}
		For $n \geq 5$, we have:
		\begin{enumerate}
			\item[(i)]
			$\frac{n^2}{9}\leq  c_1(n,T) \leq \frac{n^2}{6}+\frac{5}{6}n-3. $
			\item[(ii)]
			If $G$ is  an $n$-vertex 3-graph  satisfying that $\delta_1(G)> \frac{n^2}{6}+\frac{5}{6}n-3 $, then for any vertex $u$ in $G$, there is a generalized triangle $T^1$ or $T^2$ covering $u$. 
			\item[(iii)]
			If $G$ is  an $n$-vertex 3-graph  satisfying that  $\delta_1(G)> \frac{n^2}{4}+\frac{1}{4}n-2$, then for any vertex $u$ in $G$, there are generalized triangles $T^1$ and $T^2$ covering $u$. 
			\item[(iv)]
			If  $G$ is  an $n$-vertex 3-graph  satisfying that  $\delta_1(G)> \frac{\sqrt{5}-1}{4}n^2+O(n)  $, then for any vertex $u$ in $G$, there are generalized triangles $T^1$, $T^2$ and $T^3$ covering $u$. 
		\end{enumerate}
	\end{thm}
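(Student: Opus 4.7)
The lower bound in (i) comes from the standard $3$-partite construction: partition $V = A_1 \cup A_2 \cup A_3$ into parts of sizes $\lfloor n/3 \rfloor$ or $\lceil n/3 \rceil$ and take all triples meeting each part once. Every vertex has degree at least $\lfloor n/3 \rfloor \lfloor (n-1)/3 \rfloor \geq n^2/9$, and no copy of $T$ can arise: any two edges $\{v_1,v_2,v_3\}$ and $\{v_1,v_2,v_4\}$ sharing a pair must have $v_1, v_2$ in two different parts, forcing both $v_3$ and $v_4$ into the remaining part, which precludes the third edge $\{v_3,v_4,v_5\}$. The upper bound in (i) is immediate from (ii).

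For (ii)--(iv), the common strategy fixes a vertex $u$, works with its link graph $L = G_u$ (a graph on $n-1$ vertices with $d(u)$ edges), and translates each $T^i$ avoidance condition into a structural restriction on $L$ together with the non-$u$ edges of $G$. In (ii), assuming for contradiction that $u$ is in no $T^1$ or $T^2$, I extract two facts. No $T^2$: whenever $\{v_1, v_2\} \in E(L)$ and $w \in N_G(\{v_1,v_2\}) \setminus \{u\}$, we must have $N_L(w) \subseteq \{v_1, v_2\}$; in particular, $vv_1v_2 \notin E(G)$ whenever $\{v_1,v_2\} \in E(L)$ and $d_L(v) \geq 3$. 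No $T^1$: if vertices $v_3, v_4$ share a common $L$-neighbor $v_2$, then $N_G(\{v_3,v_4\}) \subseteq \{u, v_2\}$, and this further shrinks to $\{u\}$ as soon as $v_3, v_4$ have two or more common $L$-neighbors. I would then partition $V(L)$ into low- and high-$L$-degree classes, bound the $L$-edges incident to each class using the first restriction, and use the second to control pair codegrees via a Kruskal--Katona-type shadow estimate, deducing the contradiction $|E(L)| \leq n^2/6 + 5n/6 - 3$.

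For (iii), I plan to run two analogous single-position arguments in parallel, once for $T^1$ and once for $T^2$. Once $|E(L)| > (n-1)^2/4 + O(n)$, Mantel's theorem produces a triangle in $L$ and hence many cherries; the global minimum-degree hypothesis then supplies the third edge needed to close a $T^1$ around $u$, and symmetrically a $T^2$. For (iv), I additionally need an $L$-edge $\{v_3, v_4\}$ whose two endpoints have two common co-neighbors in $G-u$; such a configuration immediately yields a $T^3$. The plan is to double-count ordered pairs $(e, f)$ with $e = \{v_3, v_4\} \in E(L)$ and $f = \{v_1, v_2\}$ a pair of vertices satisfying $v_1v_2v_3, v_1v_2v_4 \in E(G)$. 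Parameterising the extremal link by $|E(L)| = \alpha n^2$ reduces the question to a quadratic inequality in $\alpha$ whose positive root is $\alpha = (\sqrt{5}-1)/4$, which is the source of the golden-ratio constant.

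The central obstacle is the layered case analysis: each of $T^1, T^2, T^3$ contributes a different structural restriction, and the extremal link graph changes depending on which subset of positions one insists on covering. Matching the counting to the correct extremal configuration in each case---especially carrying out the quadratic optimisation yielding the $\sqrt{5}$ bound in (iv) and controlling the lower-order terms to get the explicit $5n/6 - 3$ error in (ii)---is where the bulk of the work lies.
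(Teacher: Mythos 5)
There is a genuine gap: your write-up is a plan whose decisive steps are deferred, and where they are specified they do not match the configurations actually needed. For (ii) and (iii) the paper's proofs are short pigeonhole arguments, not structural classifications: pick an edge $\{u,x,y\}$ through $u$, restrict the three link graphs $G_u$, $G_x$, $G_y$ to the common $n-3$ outside vertices, and note that if $\delta_1(G)>\frac{n^2}{6}+\frac{5}{6}n-3$ the three restricted links together have more than $\binom{n-3}{2}$ edges, so two of them share a pair $\{s,t\}$; a pair shared by $G_x$ and $G_y$ closes a $T^1$, and a pair shared by $G_u$ and one of $G_x,G_y$ closes a $T^2$ (for (iii) one runs the same count on each of the two relevant pairs of links, whence the $n^2/4$ threshold). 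Your proposed route for (ii) via degree-class partitions and a Kruskal--Katona shadow estimate is never carried out, and your route for (iii) via Mantel's theorem in the link $L=G_u$ cannot by itself produce a $T^1$: in the paper's convention $T^1$ covers $u$ as the degree-one vertex, so one needs two edges \emph{disjoint from} $u$ sharing a pair, information that the link of $u$ does not contain. You would have to bring in the links of the other two vertices of an edge through $u$, which is exactly the paper's mechanism and is absent from your sketch.

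For (iv) you have identified the wrong configuration. In the paper's labelling, $T^3$ covers $u$ as a vertex of the \emph{shared pair}, so the needed structure is a cherry in $G_u$ (a vertex $x$ with two link-neighbours $y,z$, i.e.\ $uxy,uxz\in E(G)$) together with an edge $\{y,z,f\}$ with $f\notin\{u,x\}$. The configuration you count --- an $L$-edge $\{v_3,v_4\}$ together with a pair $\{v_1,v_2\}$ such that $v_1v_2v_3,v_1v_2v_4\in E(G)$ --- is a $T^1$ covering $u$, already handled at the $n^2/6$ threshold, so your double count targets the wrong object and there is no reason the quadratic it produces has root $(\sqrt{5}-1)/4$. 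The paper instead counts, over all cherries of $G_u$, the triples $\{y,z,f\}$ that are forced to be non-edges, bounds this multiset from below via Cauchy--Schwarz on the link degrees and from above by $\binom{n}{3}-\delta_1(G)n/3$, and the clash of the two bounds yields the golden-ratio constant. Finally, a small quantitative point on (i): the balanced complete $3$-partite $3$-graph has minimum degree roughly $\lfloor n/3\rfloor^2$, which falls below $n^2/9$ when $3\nmid n$; the paper's construction $V=\{u\}\cup A_1\cup A_2\cup B$ with edge classes $\{u\}\vee A_1\vee A_2$, $A_1\vee A_2\vee B$ and $\binom{B}{3}$ (and $|A_1|=|A_2|=\lceil n/3\rceil$) is what achieves $\delta_1\geq n^2/9$ exactly.
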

	\par 
	Now we pay attention to some trees covering problems. For $k\geq 2$, let the $3$-graph $S_k$ be the $k$-star with the vertex set $\{v_0,v_1,v_2...,v_{2k-1},v_{2k}\}$ and the edge set $\{\{v_0,v_1,v_2\},\{v_0,v_3,v_4\},...,\{v_0,v_{2k-1},v_{2k}\}\}$. Let $v_0$ be the center of $S_k$. For $k\geq 2$, let the $3$-graph $P_k$ be the the linear $k$-path with the vertex set $\{v_1,v_2...,v_{2k},v_{2k+1}\}$ and the edge set $\{\{v_1,v_2,v_3\}\,\{v_3,v_4,v_5\},...,\{v_{2k-1},v_{2k},v_{2k+1}\}\}$. In this paper, we consider the $F$-covering problem when $F$ is the $k$-star $S_k$ or the linear $k$-path $P_k$. \par  
	
	When $k=2$, the $2$-star $S_2$ is the linear $2$-path $P_2$. 
	Figure \ref{fig:P_2} is an example of the linear $2$-path $P_2$. We determine the exact value of $c_2(n,P_2)$ and $c_1(n,P_2)$. The results  on the linear $2$-path covering or the $2$-star covering are as follows. \par

	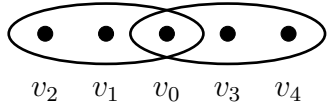
\begin{figure}[!htbp]
		\centering
		\begin{tikzpicture}[scale=0.8]
			\draw [line width=1.pt] (1.,0.) ellipse (1.6cm and 0.5cm);
			\draw [line width=1.pt] (3.,0.) ellipse (1.6cm and 0.5cm);
			\draw[color=black,fill=black] (0.,0.) node[below=0.5] {$v_2$}circle (3.5pt);
			\draw[color=black,fill=black] (1.,0.) node[below=0.5] {$v_1$}circle (3.5pt);
			\draw[color=black,fill=black] (2.,0.) node[below=0.5] {$v_0$}circle (3.5pt);
			
			\draw[color=black,fill=black] (3.,0.) node[below=0.5] {$v_3$}circle (3.5pt);
			\draw[color=black,fill=black] (4.,0.) node[below=0.5] {$v_4$}circle (3.5pt);
			
		\end{tikzpicture}
		\caption{Linear 2-path $P_2$ }\label{fig:P_2}
	\end{figure}

	\begin{thm}\label{s22}
		For $n\geq 5$, we have $c_2(n,P_2)=0$.
	\end{thm}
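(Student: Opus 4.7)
The plan is to establish both directions of $c_2(n,P_2)=0$. The lower bound $c_2(n,P_2)\geq 0$ is witnessed by the empty $3$-graph on $n$ vertices, which has $\delta_2=0$ and contains no copy of $P_2$ at all, so in particular no $P_2$-covering.

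For the upper bound I would fix a $3$-graph $G$ on $n\geq 5$ vertices with $\delta_2(G)\geq 1$ and an arbitrary vertex $u\in V(G)$, and produce a copy of $P_2$ through $u$. The main tool is the link graph $G_u$: since $\delta_2(G)\geq 1$, for every $v\in V(G)\setminus\{u\}$ the pair $\{u,v\}$ lies in some edge, so $v$ has a neighbor in $G_u$. Thus $G_u$ is a graph on $n-1\geq 4$ vertices with no isolated vertex. I would then split on whether $G_u$ contains a matching of size $2$. If it does, say $\{v_1w_1,v_2w_2\}$, then $\{u,v_1,w_1\}$ and $\{u,v_2,w_2\}$ are two edges of $G$ meeting only at $u$, which form a $P_2$ with $u$ at the center.

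Otherwise $G_u$ has no $2$-matching and no isolated vertex on at least $4$ vertices, and a short structural observation forces $G_u=K_{1,n-2}$ with some center $c$; equivalently, every edge of $G$ through $u$ has the form $\{u,c,\cdot\}$. I would then sub-split according to whether $G$ contains an edge avoiding $u$. If some edge $e=\{a,b,d\}$ of $G$ does not contain $u$: when $c\notin e$, the edge $\{u,c,a\}$ exists and meets $e$ only in $a$; when $c\in e$, say $c=a$, I would use $n\geq 5$ to pick $e'\in V\setminus\{u,c,b,d\}$ and observe that $\{u,c,e'\}$ meets $e$ only in $c$. Either configuration yields a $P_2$ covering $u$ (as a leaf). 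If on the other hand every edge of $G$ contains $u$, then every pair in $V\setminus\{u\}$ is forced into an edge through $u$, so $G_u=K_{n-1}$, contradicting $G_u=K_{1,n-2}$ once $n\geq 5$.

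The only mildly delicate ingredient is the structural claim that a graph on at least $4$ vertices with no isolated vertex and no matching of size $2$ must be a star; I would record this as a one-line in-line observation, noting that any two edges must share a vertex and that the triangle alternative is ruled out by the presence of a fourth non-isolated vertex. Beyond that the argument is a routine case analysis and I do not anticipate any substantial obstacle.
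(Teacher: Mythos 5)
Your proposal is correct and rests on the same two ideas as the paper's proof: the absence of a $P_2$ centred at $u$ means the link graph $G_u$ has no $2$-matching, and the codegree condition applied to a pair avoiding $u$ then produces an edge that meets an edge through $u$ in exactly one vertex, giving a $P_2$ with $u$ as a leaf. The only difference is organizational: you first pin down $G_u$ as the star $K_{1,n-2}$ (ruling out the triangle via the fourth non-isolated vertex) and then split on whether some edge of $G$ avoids $u$, whereas the paper runs a direct case analysis on $d_G(\{u,v_3\})$ and shows that a specific pair such as $\{v_1,v_4\}$ or $\{v_3,v_5\}$ would be forced to have empty neighborhood; your version is somewhat cleaner but not a genuinely different route.
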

	\begin{thm}\label{s21}
		For $n\geq 8$, we have $c_1(n,P_2)= 3$. 
	\end{thm}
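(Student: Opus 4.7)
The equality $c_1(n,P_2)=3$ has two sides: constructing an $n$-vertex $3$-graph with $\delta_1=3$ and an uncovered vertex, and showing that $\delta_1(G)\geq 4$ forces a $P_2$-covering. A useful reformulation I would record first: a vertex $u$ is covered by a copy of $P_2$ if and only if $G$ contains two edges $e,e'$ with $|e\cap e'|=1$ and $u\in e\cup e'$.

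For the lower bound I would take four distinguished vertices $u,a,b,c$ and place all four $3$-subsets of $\{u,a,b,c\}$ as edges (a copy of $K_4^3$), together with a vertex-disjoint $K_{n-4}^3$ on the remaining $n-4\geq 4$ vertices. Any two edges of the $K_4^3$ meet in two vertices and edges across the two pieces are disjoint, so by the reformulation $u$ is in no copy of $P_2$. Each of $u,a,b,c$ has degree $3$, and for $n\geq 8$ every other vertex has degree $\binom{n-5}{2}\geq 3$, giving $\delta_1(G)=3$.

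For the upper bound suppose $\delta_1(G)\geq 4$ but some vertex $u$ is uncovered. Let $e_1,\dots,e_d$, with $d\geq 4$, be the edges through $u$. By the reformulation $|e_i\cap e_j|\neq 1$ for all $i\neq j$, and since $u\in e_i\cap e_j$ this forces $|e_i\cap e_j|\geq 2$. Hence the $2$-sets $f_i:=e_i\setminus\{u\}$ pairwise intersect. Using the fact that any intersecting family of $2$-sets is either contained in a star or equals the triangle $K_3$, and that a triangle has only $3$ edges, we are in the star case: there is a common vertex $a$ with $f_i=\{a,x_i\}$ for pairwise distinct $x_i$, i.e.\ $e_i=\{u,a,x_i\}$.

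Finally, let $e'$ be any edge of $G$ with $u\notin e'$; the non-covering condition gives $|e'\cap e_i|\in\{0,2,3\}$ for every $i$. I would split on whether $a\in e'$: if $a\in e'$, then $|e'\cap e_i|\geq 1$, so $|e'\cap e_i|\geq 2$, forcing $x_i\in e'$ for every $i$, which is impossible since $|e'|=3$ and $d\geq 4$; if $a\notin e'$, then $|e'\cap e_i|=|e'\cap\{x_i\}|\leq 1$, forcing $|e'\cap e_i|=0$, so $e'\cap\{x_1,\dots,x_d\}=\emptyset$. In either case no edge outside $\{e_1,\dots,e_d\}$ contains $x_1$, giving $d_G(x_1)=1<4$ and contradicting $\delta_1(G)\geq 4$. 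The main subtlety is the structural step identifying the star structure of the link (which would fail for $d=3$, corresponding to the triangle case); once that is established the rest is a short case analysis, and the hypothesis $n\geq 8$ enters only to make $K_{n-4}^3$ have minimum degree at least $3$.
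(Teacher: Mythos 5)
Your proof is correct and takes essentially the same route as the paper: the same extremal construction $K_4^3\cup K_{n-4}^3$ for the lower bound, and for the upper bound the same reduction to the link graph of the uncovered vertex $u$, which must be an intersecting family of $2$-sets (equivalently, have no $2$-matching) and hence, having at least $4$ edges, a star, after which both arguments exhibit a neighbour of $u$ whose degree is too small. The only cosmetic difference is that the paper works with a path $v_1v_2v_3$ inside that star and bounds $d_G(v_1)\leq 3$, whereas you identify the full star $\{a,x_i\}$ and show $d_G(x_1)=1$; the content is identical.
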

What's more, we determine the codegree threshold for the property of a $3$-graph $G$ that for any vertex $u\in V(G)$ we can find a linear $2$-path $P_2$ with the center $u$.
\begin{thm}\label{pp2}
	If $G$ is an $n$-vertex $3$-graph satisfying that $n\geq 5$ and $\delta_2(G)\geq 2$, then for any vertex $u\in V(G)$, we can find $4$ vertices $p,q,s,t$ where $\{u,p,q\}$ and $\{u,s,t\}$ form a linear $2$-path $P_2$ covering $u$. 
\end{thm}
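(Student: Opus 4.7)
The plan is to reduce the statement to a statement about the link graph $G_u$ of the chosen vertex $u$. A pair of edges $\{u,p,q\}$ and $\{u,s,t\}$ of $G$ with $\{p,q\}\cap\{s,t\}=\emptyset$ forms a linear $2$-path centered at $u$, and such a pair corresponds exactly to a matching of size $2$ in $G_u$. So the theorem is equivalent to showing that $G_u$ contains two vertex-disjoint edges.

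First I would translate the hypothesis into the link graph: for every vertex $v\in V(G)\setminus\{u\}$, the degree of $v$ in $G_u$ equals $d_G(\{u,v\})\geq \delta_2(G)\geq 2$, so $\delta(G_u)\geq 2$. Moreover $|V(G_u)|=n-1\geq 4$. Thus the problem becomes: every graph $H$ on at least $4$ vertices with $\delta(H)\geq 2$ contains a matching of size $2$.

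To verify this, I would argue by contradiction: assume the maximum matching of $G_u$ has size at most $1$. Since $\delta(G_u)\geq 2>0$, $G_u$ has some edge $\{a,b\}$, and by maximality every other edge of $G_u$ meets $\{a,b\}$. Consequently every vertex $v\in V(G_u)\setminus\{a,b\}$ has all of its neighbors in $\{a,b\}$; combined with $d_{G_u}(v)\geq 2$, this forces $v$ to be adjacent to both $a$ and $b$. Because $n-1\geq 4$ there are two distinct vertices $v,w\in V(G_u)\setminus\{a,b\}$, and then $\{a,v\}$ and $\{b,w\}$ are two vertex-disjoint edges of $G_u$, contradicting the assumption.

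The only mild obstacle is the small structural claim at the end; apart from that the proof is essentially an immediate translation of the codegree condition into a minimum-degree condition on the link graph, and the sharpness of $n\geq 5$ is visible from the fact that the argument uses $|V(G_u)\setminus\{a,b\}|\geq 2$.
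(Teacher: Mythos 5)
Your proof is correct and follows essentially the same route as the paper: both reduce the statement to finding a $2$-matching in the link graph $G_u$, which has minimum degree at least $2$ and at least $4$ vertices. The only difference is the final combinatorial step -- the paper argues that a $2$-matching-free $G_u$ would have to be connected and hence a star, whose leaves contradict $\delta(G_u)\geq 2$, while you argue directly from a single maximal edge $\{a,b\}$; both verifications are valid and equally elementary.
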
 
 Through exploring the structure of graphs without some specific matchings in Theorem \ref{sgBt}, we obtain the following result on the $3$-star $S_3$-covering.
	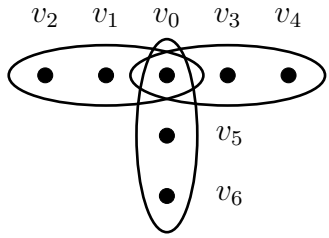
\begin{figure}[!htbp]
	\centering
	\begin{tikzpicture}[scale=0.8]
		\draw [line width=1.pt] (1.,0.) ellipse (1.6cm and 0.5cm);
		\draw [line width=1.pt] (3.,0.) ellipse (1.6cm and 0.5cm);
		\draw [line width=1.pt] (2.,-1.) ellipse (0.5cm and 1.6cm);
		
		\draw[color=black,fill=black] (0.,0.) node[above=0.5] {$v_2$}circle (3.5pt);
		\draw[color=black,fill=black] (1.,0.) node[above=0.5] {$v_1$}circle (3.5pt);
		\draw[color=black,fill=black] (2.,0.) node[above=0.5] {$v_0$}circle (3.5pt);
		
		\draw[color=black,fill=black] (3.,0.) node[above=0.5] {$v_3$}circle (3.5pt);
		\draw[color=black,fill=black] (4.,0.) node[above=0.5] {$v_4$}circle (3.5pt);
		\draw[color=black,fill=black] (2.,-1.) node[right=0.5] {$v_5$}circle (3.5pt);
		\draw[color=black,fill=black] (2.,-2.) node[right=0.5] {$v_6$}circle (3.5pt);
		
	\end{tikzpicture}
	\caption{The $3$-star $S_3$ }\label{fig:S_3}
\end{figure}

\begin{thm}\label{s32}
	If $H$ is an $n$-vertex 3-graph  satisfying $n\geq 7$ and $\delta_2(H)\geq 2 $, then for any vertex $u\in V(H)$, there is a $3$-star $S_3$ covering $u$.
\end{thm}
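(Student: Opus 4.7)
The plan is to fix any vertex $u \in V(H)$ and work in the link graph $H_u$, which has $n-1 \geq 6$ vertices and, thanks to $\delta_2(H) \geq 2$, minimum degree $\delta(H_u) \geq 2$. Since a copy of $S_3$ centered at $u$ in $H$ is in bijection with a matching of size three in $H_u$, the natural dichotomy is on the matching number $\nu(H_u)$.

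If $\nu(H_u) \geq 3$, pick a matching $\{e_1, e_2, e_3\}$ in $H_u$; the three hyperedges $\{u\} \cup e_1$, $\{u\} \cup e_2$, $\{u\} \cup e_3$ pairwise intersect only at $u$ and hence form an $S_3$ centered at $u$ covering $u$. A quick warm-up shows $\nu(H_u) \geq 2$ is automatic: otherwise $H_u$ is either a star or a triangle, each of which violates $\delta(H_u) \geq 2$ on at least six vertices.

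The delicate case is $\nu(H_u) = 2$, and here I would invoke Theorem \ref{sgBt} on graphs of small matching number and minimum degree at least $2$, which pins $H_u$ to a very restricted shape: essentially concentrated on four ``core'' vertices $V_0 = \{a,b,c,d\}$ supporting the maximum matching, with the remaining $n-5 \geq 2$ vertices pairwise non-adjacent, each attached to at least two vertices of $V_0$, and certain bipartite augmenting configurations forbidden (else a third matching edge would appear). The plan then shifts from centering the $S_3$ at $u$ to realizing $u$ as a \emph{leaf} of an $S_3$ centered at some other vertex $w$: one chooses $w$ among the structural vertices forced by the shape of $H_u$, verifies that $\{u, v, w\} \in E(H)$ for some $v$, and then uses $\delta(H_w) \geq 2$ (itself a consequence of $\delta_2(H) \geq 2$) together with the codegree information on pairs disjoint from $u$ to exhibit two further leaf-pairs in $H_w$ that are disjoint from $\{u,v\}$ and from each other.

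The main obstacle is precisely this case analysis: Theorem \ref{sgBt} yields only a short list of possible isomorphism types for $H_u$, but for each type one must verify that the global codegree information --- not already consumed describing the degenerate link of $u$ --- still suffices to supply the required disjoint pairs inside $H_w$. Because $\nu(H_u) \leq 2$ confines the interesting structure to a bounded-size set of core vertices, each subcase reduces to a finite check; the challenge is organising the checks uniformly so that in every leftover shape of $H_u$ a suitable centre $w$ and matching of size two in $H_w \setminus \{u,v\}$ can be produced from the hypothesis $n \geq 7$ and $\delta_2(H) \geq 2$.
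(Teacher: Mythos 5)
Your reduction is the same as the paper's: pass to the link graph $H_u$, note $\delta(H_u)\ge 2$, observe that an $S_3$ centred at $u$ is a $3$-matching in $H_u$, and in the bad case invoke Theorem \ref{sgBt} to pin down $H_u$, then try to cover $u$ as a leaf of an $S_3$ centred elsewhere. However, the proposal stops exactly where the real work begins: the case $\nu(H_u)=2$ is announced as ``the main obstacle'' and a ``finite check,'' but it is never carried out, and the sketch you give for it is not sufficient as stated. First, your description of the structure is off: Theorem \ref{sgBt} forces $H_u$ to be the book $B_{n-3}$ or $B_{n-3}^-$, whose edges are covered by a \emph{two}-vertex spine $\{b_1,b_2\}$ with all $n-3$ page vertices adjacent to both spine vertices --- not a four-vertex core with the remaining vertices attached to it. Second, the step ``choose a centre $w$ with $\{u,v,w\}\in E(H)$ and use $\delta(H_w)\ge 2$ to exhibit two further disjoint leaf-pairs in $H_w\setminus\{u,v\}$'' does not follow from minimum degree alone: $H_w$ could itself be a book whose spine contains $u$ or $v$, in which case every edge of $H_w$ meets $\{u,v\}$ and no such pairs exist.

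The paper closes this gap with a co-degree dichotomy you would need to reproduce. Writing $A$ for the page vertices, one first finds $a_0\in A$ with $\{b_1,b_2,a_0\}\in E(H)$ (using $\delta_2(H)\ge 2$). If some pair $\{a_1,a_2\}\subseteq A\setminus\{a_0\}$ has a co-neighbour in $\{b_1,b_2\}$, say $b_1$, then $\{b_1b_2a_0\},\{b_1a_1a_2\},\{b_1ua_3\}$ is an $S_3$ centred at $b_1$ covering $u$ (here $\{b_1,u,a_3\}\in E(H)$ because $a_3$ is a page of $H_u$, and $n\ge 7$ guarantees a third page $a_3$). Otherwise every pair in $A\setminus\{a_0\}$ has all its co-neighbours inside $A$, so $\delta_2\bigl(H[A\setminus\{a_0\}]\bigr)\ge 1$, and Theorem \ref{s22} supplies a $P_2$ centred at some $a_1\in A\setminus\{a_0\}$; adding the edge $\{a_1,b_1,u\}$ yields an $S_3$ centred at $a_1$ covering $u$. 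Without an argument of this kind --- in particular without the fallback of centring the star at a page vertex via the induced subgraph on $A$ --- the $\nu(H_u)=2$ case remains open, so the proposal as written has a genuine gap.
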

By Theorem \ref{s32}, we can directly get the following corollary.
\begin{cor}
	For $n\geq 7$, $c_2(n,S_3)\leq 1.$ 
\end{cor}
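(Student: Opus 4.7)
\vspace{2mm}
\noindent\textbf{Proof plan.} Fix any vertex $u\in V(H)$ and let $L:=H_u$ denote the link graph of $u$. Since $d_H(\{u,v\})\ge \delta_2(H)\ge 2$ for every $v\neq u$, every vertex of $L$ has degree at least $2$, and $|V(L)|=n-1\ge 6$. The plan is to split the argument according to the matching number of $L$.

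If $L$ admits a matching of size $3$, say $\{a_1b_1,a_2b_2,a_3b_3\}$, then the three hyperedges $\{u,a_i,b_i\}$ pairwise meet only in $u$ and together form a copy of $S_3$ centred at $u$; this realises $u$ as the centre of a covering $S_3$, so we are done.

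Suppose instead that the matching number of $L$ is at most $2$. A short argument rules out matching number $\le 1$: with $\delta(L)\ge 2$ on at least $6$ vertices, the whole edge set of $L$ cannot be a star (leaves would have degree $1$) nor a triangle (only $3$ vertices), so $L$ must have a maximum matching $M=\{a_1b_1,a_2b_2\}$. Writing $W:=\{a_1,b_1,a_2,b_2\}$, every vertex $x$ outside $W$ has all of its at least $2$ neighbours inside $W$, since otherwise $M$ would be extendable. In this regime I aim to cover $u$ as a \emph{leaf} of an $S_3$. Taking $v:=a_1$ as a candidate centre, the edge $\{u,a_1,b_1\}$ already belongs to $H$, so it suffices to exhibit a matching of size $2$ in the subgraph of the link $H_{a_1}$ induced on $V(H)\setminus\{a_1,u,b_1\}$, a vertex set of size $n-3\ge 4$ in which every vertex still has at least $2$ neighbours in $H_{a_1}$ overall.

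The main obstacle is exactly this final sub-case: when the induced subgraph above happens to have matching number at most $1$, the choice $v=a_1$ fails and we must relocate the centre. The plan here is to switch the candidate centre to one of $b_1,a_2,b_2$, or to a vertex of $V(H)\setminus W$ that still forms a codegree-$2$ pair with $u$, and to show that at least one of these options admits the needed matching of size $2$ once $n\ge 7$. The ``few possible obstructions'' to a matching of size $2$ in a graph with $\delta\ge 2$ are precisely what Theorem \ref{sgBt} pins down, so invoking it reduces the task to a finite case check across the short list of extremal shapes of $L$. I expect this case analysis, and in particular the bookkeeping needed to cover an obstruction at one candidate centre by switching to another, to be the most delicate part of the argument; the rest is a direct translation between matchings in link graphs and $S_3$-copies in $H$.
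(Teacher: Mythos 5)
Your overall strategy is the same as the paper's: pass to the link graph $L=H_u$, note $\delta(L)\ge 2$, win immediately if $L$ has a $3$-matching (then $u$ is the centre of an $S_3$), and otherwise use Theorem \ref{sgBt} to pin down $L$ and cover $u$ as a \emph{leaf} of an $S_3$ centred elsewhere. The reduction to matching number exactly $2$ is fine. But the decisive sub-case is left as a plan rather than proved, and it is exactly where all the work lies. Your proposed step ``exhibit a $2$-matching in the subgraph of $H_{a_1}$ induced on $V(H)\setminus\{a_1,u,b_1\}$'' does not follow from what you have established: the degree condition $\delta_2(H)\ge 2$ only guarantees each vertex two neighbours in the \emph{full} link $H_{a_1}$, and those two neighbours may both lie in the deleted set $\{u,b_1\}$, so the induced subgraph can have isolated vertices and no $2$-matching. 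You then say you will ``switch the candidate centre'' and ``expect'' a finite case check to close this, but you never show that some centre works; that assertion is the theorem, not a remark.

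For comparison, the paper closes this gap by exploiting the explicit book structure $B_{n-3}$ or $B_{n-3}^-$ of $L$: writing $b_1,b_2$ for the spine and $A$ for the pages, it first uses $d_H(\{b_1,b_2\})\ge 2$ to produce an edge $\{b_1,b_2,a_0\}$ with $a_0\in A$; then for a page-pair $\{a_1,a_2\}\subseteq A\setminus\{a_0\}$ it splits according to whether some spine vertex is a co-neighbour (yielding an $S_3$ centred at that spine vertex via $\{b_1b_2a_0\},\{b_1a_1a_2\},\{b_1ua_3\}$, which is why $n\ge 7$ is needed to supply $a_3$) or all co-neighbours lie in $A$ (in which case the sub-hypergraph on $A\setminus\{a_0\}$ still has positive codegree and contains a $P_2$ centred at some $a_1$, which together with $\{a_1,b_1,u\}$ gives an $S_3$ centred at $a_1$). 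Some concrete argument of this kind — using the codegree hypothesis on pairs \emph{not} containing $u$ — is unavoidable, because the structure of $L$ alone does not determine which third edge through a candidate centre exists. As written, your proposal has a genuine gap at precisely this point.
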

What's more, we determine the codegree threshold for the property of a $3$-graph $G$ that for any vertex $u\in V(G)$ we can find a $3$-star $S_3$ with the center $u$.
\begin{thm}\label{S3 center cover}
	If $H$ is an $n$-vertex $3$-graph with $n\geq 7$ and $\delta_2(H)\geq 3$, then for any vertex $u\in V(H)$ we can find a $S_3$ with the center $u$.  
\end{thm}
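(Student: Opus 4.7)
}
The plan is to translate the problem into a matching-finding problem in the link graph $H_u$ and then establish a small graph-theoretic lemma about matchings under a minimum degree condition. Recall that $H_u$ is the graph on $V(H)\setminus\{u\}$ whose edges are precisely the pairs $\{v,w\}$ with $\{u,v,w\}\in E(H)$. A copy of $S_3$ with center $u$ is, by definition, a collection of three edges $\{u,a_i,b_i\}$ for $i=1,2,3$ which pairwise meet only in $u$. Equivalently, such an $S_3$ exists in $H$ if and only if $H_u$ contains a matching of size $3$. So the whole theorem reduces to finding a $3$-matching in $H_u$.

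The first step will be to translate the codegree hypothesis into a minimum degree hypothesis on $H_u$. For any $v\in V(H)\setminus\{u\}$, the neighbours of $v$ in $H_u$ are exactly the co-neighbours of the pair $\{u,v\}$ in $H$, so $\deg_{H_u}(v)=d_H(\{u,v\})\ge\delta_2(H)\ge 3$. Combined with $|V(H_u)|=n-1\ge 6$, this reduces the theorem to the following graph lemma: \emph{every graph $G$ on at least $6$ vertices with $\delta(G)\ge 3$ has a matching of size $3$.}

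The second step is to prove this lemma by contradiction. Suppose $\nu(G)\le 2$. One quickly rules out $\nu(G)\le 1$ (the graph would be a subgraph of a star or a triangle, forcing a vertex of degree $\le 1$ or $|V(G)|\le 3$). So assume there is a maximum matching $M=\{e_1,e_2\}=\{\{a,b\},\{c,d\}\}$, write $I=V(G)\setminus\{a,b,c,d\}$, and note that $|I|\ge 2$ and $I$ is independent. Each $v\in I$ has $N_G(v)\subseteq\{a,b,c,d\}$ of size $\ge 3$. The key observation is that whenever we can pick distinct $v,v'\in I$ and a matching edge $xy\in M$ with $v\sim x$ and $v'\sim y$, the alternating path $v\,x\,y\,v'$ is $M$-augmenting, producing a $3$-matching. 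A short case analysis on the $3$- or $4$-element sets $N(v_1),N(v_2)\subseteq\{a,b,c,d\}$ for two fixed vertices $v_1,v_2\in I$ shows that such a pair $(xy,v,v')$ always exists: since $|N(v_1)|+|N(v_2)|\ge 6$ and $|\{a,b,c,d\}|=4$, the two neighbourhoods force such a configuration in every case, and the remaining matching edge of $M$ provides the third edge of the matching when it is not used for augmentation.

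The main obstacle is the case analysis in the last step: one has to be slightly careful about configurations where $N(v_1)$ and $N(v_2)$ overlap heavily (e.g.\ both equal to $\{a,b,c,d\}$), since then neither matching edge of $M$ can be left intact simultaneously. In those cases, however, one can always recover a $3$-matching of the form $\{v_1,x\},\{v_2,y\},e_j$ for a suitable choice of $x\in e_i$, $y\in e_i$, and $e_j\ne e_i$, which resolves the analysis. Once the graph lemma is established, applying it to $H_u$ produces a matching $\{a_1,b_1\},\{a_2,b_2\},\{a_3,b_3\}$ in $H_u$, and the three hyperedges $\{u,a_i,b_i\}$ form the desired $S_3$ centred at $u$.
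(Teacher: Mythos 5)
Your proposal is correct, and the reduction in your first step is exactly the paper's: a copy of $S_3$ centred at $u$ is the same as a $3$-matching in the link graph $H_u$, which has $n-1\ge 6$ vertices and minimum degree at least $\delta_2(H)\ge 3$. Where you diverge is in how you extract the $3$-matching. The paper argues structurally: it first shows $H_u$ must be connected, invokes the fact that minimum degree $\delta$ forces a cycle of length at least $\delta+1$ to obtain a $4$-cycle, proves the longest cycle is exactly a $4$-cycle, and then runs a fairly long case analysis on how vertices outside that cycle attach to it, reaching a contradiction in each case. You instead take a maximum matching $M=\{ab,cd\}$, observe that $I=V\setminus\{a,b,c,d\}$ is independent with $|I|\ge 2$ and every $v\in I$ has at least three neighbours inside $\{a,b,c,d\}$, and find an augmenting configuration; the case analysis you defer really is short (each $v\in I$ misses at most one of $a,b,c,d$, so for $v_1,v_2\in I$ either one can be matched into $\{a,b\}$ and the other into $\{a,b\}$ as well, leaving $cd$ intact, or symmetrically with $\{c,d\}$ and $ab$ intact), and your worry about heavily overlapping neighbourhoods dissolves since $N(v_1)=N(v_2)=\{a,b,c,d\}$ immediately yields $\{v_1a,v_2b,cd\}$. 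Your route is shorter and self-contained, avoiding both the connectivity argument and the longest-cycle machinery; it also isolates a clean reusable lemma (every graph on at least $6$ vertices with minimum degree $3$ has a $3$-matching), which is slightly stronger than what the paper's Theorem~\ref{sgBt} provides (that classification assumes $n\ge 7$ and minimum degree $2$, and so does not directly cover the $6$-vertex link graph arising when $n=7$). The paper's approach, on the other hand, yields more structural information about the near-extremal graphs, which it reuses elsewhere.
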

 Using the similar technique in the proof of Theorem \ref{s32} we also give bounds of $c_2(n,S_k)$ and  $c_1(n,S_k)$ for $k\geq 3$.
\par 
\begin{Pro}\label{sk2}
	Let $k$ be an integer with $k\geq 3$. Let $H$ be an $n$-vertex 3-graph  with $n\geq 2k+1$. We have:
	\begin{enumerate}
		\item [(i)] $c_2(n,S_k)\leq  \max \{\frac{4k^2-6k+2}{n-1},k-2-\frac{k^2-nk}{n-1}\}$. 
	\item [(ii)] $c_1(n,S_k)\leq \max \{\binom{2k-1}{2},\binom{n-1}{2}-\binom{n-k}{2}\}.$
	\end{enumerate} 

\end{Pro}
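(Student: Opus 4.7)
\textbf{Proof proposal for Proposition \ref{sk2}.} My plan is to reduce both parts to the Erd\H{o}s--Gallai matching theorem applied to the link graph $H_u$ of each vertex $u\in V(H)$. The key observation is that $u$ is covered by a copy of $S_k$ as its center if and only if $H_u$ contains a matching of size $k$; hence it suffices to guarantee such a matching at every vertex $u$.

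The tool I would invoke is the Erd\H{o}s--Gallai matching bound: a graph on $m$ vertices with more than $\max\{\binom{2k-1}{2},\binom{k-1}{2}+(k-1)(m-k+1)\}$ edges contains a matching of size $k$. I would apply this to $H_u$, which has $m=n-1\ge 2k$ vertices thanks to the hypothesis $n\ge 2k+1$.

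For part (ii) the argument is immediate: $e(H_u)=d_H(u)\ge\delta_1(H)$, and a one-line algebraic identity gives $\binom{k-1}{2}+(k-1)(n-k)=\binom{n-1}{2}-\binom{n-k}{2}$. So if $\delta_1(H)>\max\{\binom{2k-1}{2},\binom{n-1}{2}-\binom{n-k}{2}\}$, then $H_u$ has strictly more edges than the Erd\H{o}s--Gallai threshold on $n-1$ vertices, yielding a size-$k$ matching in $H_u$ and hence an $S_k$ centered at $u$.

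For part (i), the codegree hypothesis $\delta_2(H)\ge d$ forces every vertex of $H_u$ to have degree at least $d$ in $H_u$, so $e(H_u)\ge(n-1)d/2$. The routine algebraic step is to verify that
\[
\frac{n-1}{2}\cdot\frac{4k^2-6k+2}{n-1}=\binom{2k-1}{2},\qquad \frac{n-1}{2}\cdot\left(k-2-\frac{k^2-nk}{n-1}\right)=\binom{k-1}{2}+(k-1)(n-k).
\]
Once this is in hand, whenever $\delta_2(H)$ exceeds the maximum of the two expressions in the statement, $e(H_u)\ge(n-1)\delta_2(H)/2$ strictly exceeds both Erd\H{o}s--Gallai thresholds on $n-1$ vertices, giving a matching of size $k$ in $H_u$ and thus a copy of $S_k$ centered at $u$. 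There is no substantive obstacle beyond this arithmetic verification that the stated bounds translate precisely into the Erd\H{o}s--Gallai thresholds on the link graph.
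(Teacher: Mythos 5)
Your proposal is correct and follows essentially the same route as the paper: both parts apply the Erd\H{o}s--Gallai matching theorem to the link graph $H_u$, with part (i) passing from the codegree condition to $e(H_u)\ge (n-1)\delta_2(H)/2$ via handshaking and part (ii) using $e(H_u)=d_H(u)$ directly. The arithmetic identities you flag are exactly the verifications implicit in the paper's argument.
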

 Figure \ref{fig:P_3} is an example of the linear $3$-path $P_3$. We determine the exact value of $c_2(n,P_3)$ and asymptotically determine $c_1(n,P_3)$ as follows.

	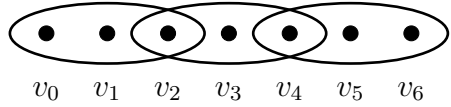
\begin{figure}[!htbp]
	\centering
	\begin{tikzpicture}[scale=0.8]
		\draw [line width=1.pt] (1.,0.) ellipse (1.6cm and 0.5cm);
		\draw [line width=1.pt] (3.,0.) ellipse (1.6cm and 0.5cm);
		\draw [line width=1.pt] (5.,0.) ellipse (1.6cm and 0.5cm);
		
		\draw[color=black,fill=black] (0.,0.) node[below=0.5] {$v_0$}circle (3.5pt);
		\draw[color=black,fill=black] (1.,0.) node[below=0.5] {$v_1$}circle (3.5pt);
		\draw[color=black,fill=black] (2.,0.) node[below=0.5] {$v_2$}circle (3.5pt);
		
		\draw[color=black,fill=black] (3.,0.) node[below=0.5] {$v_3$}circle (3.5pt);
		\draw[color=black,fill=black] (4.,0.) node[below=0.5] {$v_4$}circle (3.5pt);
		\draw[color=black,fill=black] (5.,0.) node[below=0.5] {$v_5$}circle (3.5pt);
		\draw[color=black,fill=black] (6.,0.) node[below=0.5] {$v_6$}circle (3.5pt);
		
	\end{tikzpicture}
	\caption{Linear 3-path $P_3$ }\label{fig:P_3}
\end{figure}

\begin{thm}\label{p32 exact}
	For $n\geq 8$, we have $c_2(n,P_3)=1$.
\end{thm}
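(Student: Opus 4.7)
The plan is to prove both directions: the lower bound $c_2(n,P_3)\geq 1$ by an explicit construction, and the upper bound $c_2(n,P_3)\leq 1$ by showing $\delta_2(G)\geq 2$ forces a $P_3$-covering. For the lower bound, I would take the ``book'' 3-graph on $V(G)=\{u,w\}\cup X$ with $|X|=n-2$, with edge set $\{\{u,w,x\}:x\in X\}\cup\{\{w,x,y\}:x,y\in X,\ x\neq y\}$. A quick check shows $\delta_2(G)=1$ (every pair lies in some edge). Any $P_3$ through $u$ must use an edge at $u$; but every edge at $u$ contains $w$, and every edge of $G$ contains $w$, so two consecutive edges of any candidate $P_3$ through $u$ would both contain $w$, forcing them to share at least the vertex $w$, which violates the required structure $|e_1\cap e_2|=|e_2\cap e_3|=1$ together with $e_1\cap e_3=\emptyset$. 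Hence $u$ is not $P_3$-covered.

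For the upper bound, fix $u$ and apply Theorem \ref{pp2} to obtain a $P_2$ centered at $u$: edges $E_1=\{u,a,b\}$ and $E_2=\{u,c,d\}$ with $a,b,c,d$ distinct. Let $S=\{u,a,b,c,d\}$ and $U=V\setminus S$, so $|U|=n-5\geq 3$. The natural extension step is to find an edge $\{v,x,y\}$ with $v\in\{a,b,c,d\}$ and $x,y\in U$; together with $E_1,E_2$ (with the edge through $v$ taken as the middle edge of $P_3$), this immediately yields a $P_3$ covering $u$ at position $v_3$. Since $\delta_2(G)\geq 2$ gives $\delta(G_v)\geq 2$ and hence $|E(G_v)|\geq n-1$, such an extension edge should typically exist; the effort lies in the exceptional case.

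The main obstacle is handling the case where the extension fails for every $v\in\{a,b,c,d\}$, i.e., every edge incident to $v$ has another vertex in $F_v:=S\setminus v$. In this regime I would exploit the codegree hypothesis on the $4(n-5)$ pairs $\{v,x\}$ with $v\in\{a,b,c,d\}$ and $x\in U$: each such pair has both co-neighbors in the 4-element set $F_v$, so by pigeonhole some pair $\{v,y\}$ with $y\in F_v$ has many ($\geq (n-5)/2$) co-neighbors inside $U$. I would then use the second co-neighbor of $\{c,d\}$ (guaranteed by $\delta_2\geq 2$, since $u$ is already one co-neighbor) and case-analyze where it lies, together with the concentrated pair $\{v,y\}$, to assemble an alternative $P_3$. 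The typical useful configuration is two disjoint edges (e.g.\ $\{a,b,x_1\}$ obtained from concentration and $\{u,c,d\}$ from $E_2$) plus a connector edge $\{v_3,v_4,v_5\}$ found in $U\setminus\{x_1\}$, which is nonempty because $n\geq 8$.

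The hypothesis $n\geq 8$ enters crucially in two places: it guarantees that $|U|\geq 3$ (so the connector has a fresh middle vertex) and that $G_u$ on $n-1\geq 7$ vertices with minimum degree $\geq 2$ cannot be a star, so a matching of size two in $G_u$ (and hence the initial $P_2$ at $u$) exists. I would expect the case analysis in the exceptional branch to be the longest part of the write-up, with the remaining ``generic'' extension step being short.
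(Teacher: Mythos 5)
Your lower bound is fine: the ``book'' construction has $\delta_2=1$ and in fact contains no $P_3$ at all (the two end edges of a linear $3$-path must be disjoint, yet every edge of your graph contains $w$), so it is a valid alternative to the paper's construction, in which all edges pass through $u$ itself. Your generic upper-bound step --- extending the $P_2$ from Theorem \ref{pp2} by an edge $\{v,x,y\}$ with $v\in\{a,b,c,d\}$ and $x,y\in U$ --- is also correct and coincides with the paper's first observation.

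The gap is in the exceptional case. Your pigeonhole produces a pair $\{v,z\}\subseteq S$ with many co-neighbors in $U$, but you never say how this yields a $P_3$, and the one concrete configuration you describe is structurally wrong: if the end edges are the disjoint triples $\{a,b,x_1\}$ and $\{u,c,d\}$, the middle edge of a linear $3$-path must meet each of them in exactly one vertex, so it cannot be ``found in $U\setminus\{x_1\}$'' (any edge inside $U\setminus\{x_1\}$ is disjoint from both end edges); the existence of a genuine connector is precisely what would need proving. Worse, the concentrated pair may well be $\{a,b\}$ itself, and then $\{a,b,x_1\}$ meets $E_1$ in two vertices and cannot be linearly attached to it. The exceptional case actually closes without any concentration argument, and this is essentially what the paper does: once no edge has exactly one vertex in $\{a,b,c,d\}$ and two in $U$, (1) every pair inside $U$ has all its co-neighbors in $U\cup\{u\}$, hence (as $\delta_2\geq 2$ and $|U|=n-5\geq 3$) a co-neighbor in $U$, so there is an edge of $H$ inside $U$ through any prescribed $x\in U$; and (2) for any $x\in U$ the pair $\{b,x\}$ has all its at least two co-neighbors in $\{u,a,c,d\}$, hence at least one in $\{u,c,d\}$. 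Each resulting edge $\{u,b,x\}$, $\{b,c,x\}$ or $\{b,d,x\}$ combines with one of $E_1,E_2$ and an edge of $U$ through $x$ to give a linear $3$-path covering $u$ (for instance $\{u,c,d\},\{u,b,x\},\{x,y,z\}$). You should replace the concentration step by such an explicit case analysis; as written, the argument does not close.
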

\begin{thm}\label{p31}
	For $n\geq 8$, we have  $n-2 \leq c_1(n,P_3)\leq n+4$.
\end{thm}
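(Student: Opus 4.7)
The lower bound $c_1(n,P_3)\ge n-2$ follows from an explicit construction. Take $V(G)=\{u,v\}\sqcup W$ with $|W|=n-2$, and set
\[ E(G)=\{\{u,v,w\}:w\in W\}\cup\{\{v,w_1,w_2\}:w_1\ne w_2\in W\}. \]
Then $d_G(u)=n-2$, $d_G(w)=n-2$ for each $w\in W$, and $d_G(v)=(n-2)+\binom{n-2}{2}$, so $\delta_1(G)=n-2$. Every edge of $G$ contains $v$, but in any copy of the linear $3$-path $P_3=e_1e_2e_3$ the intersection $e_1\cap e_3$ is empty, so $v$ cannot simultaneously lie in all three edges. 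Hence $G$ contains no copy of $P_3$ at all, so in particular $u$ is uncovered, giving $c_1(n,P_3)\ge n-2$.

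For the upper bound $c_1(n,P_3)\le n+4$, suppose $G$ is an $n$-vertex $3$-graph with $\delta_1(G)\ge n+5$ and fix an arbitrary vertex $u$. The link $G_u$ is a graph on $n-1$ vertices with at least $n+5$ edges. Since a graph with matching number at most one is contained in a star (plus isolated vertices) and has at most $n-2$ edges, $G_u$ admits a matching $\{x_1y_1,x_2y_2\}$ of size two. The hyperedges $\{u,x_1,y_1\}$ and $\{u,x_2,y_2\}$ overlap only in $u$, so they play the roles of $e_1,e_2$ in a prospective $P_3$ with $u$ at the junction position $v_3$. The task reduces to finding an edge $e_3\in E(G)$ that meets $\{x_2,y_2\}$ in exactly one vertex with its remaining two vertices lying in $V':=V\setminus\{u,x_1,y_1,x_2,y_2\}$.

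The plan for this step is as follows. If either $G_{x_2}$ or $G_{y_2}$ already contains an edge inside $V'$, the desired $e_3$ is immediate. Otherwise every edge of $G_{y_2}$ meets the $4$-set $\{u,x_1,y_1,x_2\}$ and every edge of $G_{x_2}$ meets $\{u,x_1,y_1,y_2\}$. To escape this degenerate configuration I exploit the abundance of matchings of size two available in $G_u$ (the excess $e(G_u)-(n-2)\ge 7$ edges beyond any star structure yields many disjoint pairs) and rerun the analysis for each candidate matching. If every such choice ends in the pathological case, then the links of all vertices playing the role of $x_2$ or $y_2$ must simultaneously have vertex cover on a common $3$-set consisting of $u$ together with two ``apex'' vertices dominating $G_u$. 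Aggregating these structural constraints with $\delta_1(G)\ge n+5$ applied to vertices outside this core yields a numerical contradiction for $n\ge 8$. For the residual degenerate configurations (e.g.\ when $G_u$ itself has matching number exactly two with apex structure), I switch $u$'s position in the target $P_3$ to the middle vertex $v_4$ or an end vertex $v_1$, and use the degree lower bound at an exterior vertex to supply the extending edge.

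The main obstacle is the case analysis for the degenerate regime, in which the links of the core vertices all have small vertex covers tied to $u$ and the apex of $G_u$. The slack between the lower bound $n-2$ and the upper bound $n+4$ arises precisely from this regime: a careful bookkeeping of the pairwise codegrees among the core vertices and the $\delta_1$ budget supplied by vertices of $V\setminus\{u\}$ closes the argument and accounts for the additive $+4$.
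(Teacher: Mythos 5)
Your lower bound is correct and is essentially the paper's: both constructions put all edges through a single apex vertex and use the fact that the two end edges of a linear $3$-path are disjoint, so no $P_3$ exists at all; the paper simply takes $E=\{u\}\vee\binom{V\setminus\{u\}}{2}$, which already has $\delta_1=n-2$. Your opening move for the upper bound also matches the paper: a $2$-matching in the link $G_u$ (guaranteed because a graph with no $2$-matching has at most $\max\{3,n-2\}<n+5$ edges) yields a $P_2$ centered at $u$, and the problem reduces to appending a third edge.

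The extension step, however, is where your argument has a genuine gap: everything after ``The plan for this step is as follows'' is a sketch, and the sketch as stated does not close. In the degenerate case you reach the conclusion that every edge of $G_{y_2}$ meets the $4$-set $\{u,x_1,y_1,x_2\}$, i.e.\ $G_{y_2}$ has a vertex cover of size $4$. But such a graph on $n-1$ vertices can have roughly $4n$ edges, so this is perfectly compatible with $\delta_1(G)\geq n+5$; no ``numerical contradiction'' follows from the degree hypothesis alone, and ``rerunning the analysis for each candidate matching'' and ``aggregating structural constraints'' are not carried out. The missing idea, which the paper supplies, is a structural claim made \emph{before} any counting: writing the $P_2$ as $\{uv_1v_2\},\{uv_3v_4\}$ and $A=V\setminus\{u,v_1,v_2,v_3,v_4\}$, no pair of vertices of $A$ can have a co-neighbor in $\{v_1,v_2,v_3,v_4\}$, since such an edge would immediately complete a $P_3$ through $u$. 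This forces every cross edge from $A$ to $\{u,v_1,\dots,v_4\}$ in the link of a vertex of $A$ to pass through $u$, cutting the cross-edge count from about $4n$ down to at most $n-6$; only then does $e(H_{v_5})\geq n+5>(n-6)+\binom{5}{2}$ force an edge inside $A$ and produce the third edge of the path. Without this claim (or an equivalent substitute), your counting in the degenerate regime does not terminate, and the remaining subcase (when no pair $\{u,v_i\}$ has a co-neighbor in $A$) still requires the separate explicit analysis the paper performs with the links $H_{v_1}$ and $H_{v_8}$.
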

What's more, we determine the codegree threshold for the property of a $3$-graph $G$ that for any vertex $u\in V(G)$ we can find a linear $3$-path with the vertex set $\{u,v_1,v_2,v_3,v_4,v_5,v_6\}$ and the edge set $\{\{v_1v_2u\},\{uv_3v_4\},\{v_4v_5v_6\}\}$ covering $u$.
\begin{thm}\label{p32 position 2}
	If $H$ is an $n$-vertex $3$-graph with $n\geq 8$ and $\delta_2(H)\geq 3$, then for any vertex $u\in V(H)$ we can find a $P_3$ with the vertex set $\{u,v_1,v_2,v_3,v_4,v_5,v_6\}$ and the edge set $\{\{uv_1v_2\},\{uv_3v_4\},\{v_4v_5v_6\}\}$ covering $u$.
\end{thm}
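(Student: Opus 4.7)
The plan is to argue by contradiction, relying on Theorem~\ref{S3 center cover} as a black box. Since $\delta_2(H)\geq 3$ and $n\geq 8\geq 7$, applying that theorem at $u$ produces three edges $f_j=\{u,a_j,b_j\}\in E(H)$ for $j=1,2,3$ whose pairwise intersection is $\{u\}$. Set $m_j=\{a_j,b_j\}$, so $m_1,m_2,m_3$ are pairwise disjoint $2$-sets in $V(H)\setminus\{u\}$. I will also use that $\delta_2(H)\geq 3$ means every link graph $H_x$ has minimum degree at least $3$.

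For any $j\in\{1,2,3\}$ and any $w\in m_j$, I would attempt to build the desired $P_3$ by setting $v_4:=w$, taking $v_3$ to be the other vertex of $m_j$, and $\{v_1,v_2\}:=m_i$ for some $i\in\{1,2,3\}\setminus\{j\}$. Then $\{u,v_1,v_2\}=f_i$ and $\{u,v_3,v_4\}=f_j$ are already edges of $H$, and the only remaining task is to find an edge $\{v_4,v_5,v_6\}\in E(H)$, equivalently an edge $\{v_5,v_6\}$ of the link graph $H_w$ avoiding the $4$-set $F_i:=\{u\}\cup m_i\cup(m_j\setminus\{w\})$. If no valid covering of $u$ existed, then for both choices $i\in\{1,2,3\}\setminus\{j\}$, every edge of $H_w$ would have to meet $F_i$.

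A short case analysis converts this double-hitting condition into a strong structural restriction: writing $\{i_1,i_2\}=\{1,2,3\}\setminus\{j\}$, any edge $e$ of $H_w$ that avoids the common intersection $F_{i_1}\cap F_{i_2}=\{u\}\cup(m_j\setminus\{w\})$ must have one vertex in $F_{i_1}\setminus F_{i_2}=m_{i_1}$ and one in $F_{i_2}\setminus F_{i_1}=m_{i_2}$, while any edge meeting this intersection trivially lies in $F_{i_1}\cup F_{i_2}$. In either case $e\subseteq F_{i_1}\cup F_{i_2}=\{u,a_1,b_1,a_2,b_2,a_3,b_3\}\setminus\{w\}$, a set of $6$ vertices. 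But $V(H_w)$ has $n-1\geq 7$ vertices, so some $z\in V(H_w)$ lies outside this $6$-set and would then have degree $0$ in $H_w$, contradicting $\delta(H_w)\geq 3$. The only genuinely delicate point is recognising that two simultaneous hitting constraints collapse the support of $H_w$ onto this fixed $6$-set, which is exactly too small once $n\geq 8$; no quantitative estimate beyond this pigeonhole is needed.
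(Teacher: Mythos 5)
Your overall strategy matches the paper's first move exactly: both proofs invoke Theorem~\ref{S3 center cover} to get a $3$-star $\{u,a_1,b_1\},\{u,a_2,b_2\},\{u,a_3,b_3\}$ centred at $u$ and then try to hang a third edge off one leaf. The paper finishes differently (and more quickly): it picks a seventh vertex $v_7$ outside the star, uses $d_H(\{b_3,v_7\})\geq 3$ to find a co-neighbour $v_8\notin\{u,a_3\}$, and then selects whichever of the two remaining star edges avoids $v_8$ as the first edge of the path. Your route via a double hitting-set condition on the link graph $H_w$ is a legitimate alternative, but it contains one false step.

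The error is the clause ``any edge meeting this intersection trivially lies in $F_{i_1}\cup F_{i_2}$,'' and with it the conclusion that every edge of $H_w$ is contained in the $6$-set $F_{i_1}\cup F_{i_2}$, so that an outside vertex $z$ has degree $0$. This is not true: an edge $\{u,z\}$ or $\{v_3,z\}$ of $H_w$ with $z\notin F_{i_1}\cup F_{i_2}$ meets both $F_{i_1}$ and $F_{i_2}$ (at a vertex of $F_{i_1}\cap F_{i_2}$) and is therefore perfectly compatible with your hitting hypothesis, yet it leaves the $6$-set. What the double-hitting condition actually gives is weaker but still sufficient: if $z\notin F_{i_1}\cup F_{i_2}$ and $\{z,y\}\in E(H_w)$, then $y$ must lie in \emph{both} $F_{i_1}$ and $F_{i_2}$, i.e.\ $y\in F_{i_1}\cap F_{i_2}=\{u\}\cup(m_j\setminus\{w\})$, a set of size $2$. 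Hence $d_{H_w}(z)\leq 2$, which contradicts $d_{H_w}(z)=d_H(\{w,z\})\geq\delta_2(H)\geq 3$ (note the contradiction needs the full strength of $\delta_2(H)\geq 3$ here, not just nonzero degree). With that one sentence replaced, your argument closes correctly, since $n-1\geq 7$ guarantees such a $z$ exists.
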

 We also give the bounds of $c_2(n,P_k)$  and $c_1(n,P_k)$ for $k\geq 4$ as follows.
\begin{Pro}\label{pk2}
Let $k$ be an integer with $k\geq 4$. We have:
	\begin{itemize}
		\item [(i)]	For $n\geq 2k+1$,  $k-3\leq c_2(n,P_k)\leq 2k-2.$
		\item [(ii)] For $n\geq 4k$, $\max \{n-2, \binom{2k-1}{2}\}\leq c_1(n,P_k)\leq \binom{n-1}{2}-\binom{n-2k+1}{2}.$
	\end{itemize}

\end{Pro}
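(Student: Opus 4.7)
The plan is to prove the four inequalities in Proposition~\ref{pk2} separately: two upper bounds via a uniform greedy path-extension, and three lower bounds via explicit constructions.

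For the codegree upper bound $c_2(n,P_k)\leq 2k-2$, I assume $\delta_2(G)\geq 2k-1$, fix an arbitrary vertex $u$, set $v_1=u$, and greedily build the edges $e_1,\dots,e_k$ of a copy of $P_k$. After constructing a partial path on $v_1,\dots,v_{2i-1}$ with $1 \leq i\leq k$, I pick any $v_{2i+1}\in V\setminus\{v_1,\dots,v_{2i-1}\}$ (possible since $n\geq 2k+1$), and then select $v_{2i}\in N_G(\{v_{2i-1},v_{2i+1}\})$ outside the partial path. Since the common neighborhood has at least $2k-1$ vertices and at most $2i-2$ are already used, at least $(2k-1)-(2i-2)=2k-2i+1\geq 1$ valid choices for $v_{2i}$ remain. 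For the degree upper bound $c_1(n,P_k)\leq \binom{n-1}{2}-\binom{n-2k+1}{2}$ I run the same greedy but count through link graphs: at step $i$, the link $G_{v_{2i-1}}$ has more than $\binom{n-1}{2}-\binom{n-2k+1}{2}$ edges on $n-1$ vertices, whereas the number of pairs of $V\setminus\{v_{2i-1}\}$ meeting the $(2i-2)$-set $\{v_1,\dots,v_{2i-2}\}$ is only $\binom{n-1}{2}-\binom{n-2i+1}{2}\leq \binom{n-1}{2}-\binom{n-2k+1}{2}$, so a link-edge $\{v_{2i},v_{2i+1}\}$ disjoint from the partial path always remains.

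The two lower bounds on $c_1(n,P_k)$ come from explicit $P_k$-free 3-graphs. For $c_1\geq n-2$, take the complete star at a vertex $w$, namely $E(G)=\{\{w,x,y\}: x,y\in V\setminus\{w\},\, x\neq y\}$; then $\delta_1(G)=n-2$ and $G$ contains no copy of $P_k$ for $k\geq 3$, because the disjoint edges $e_1,e_3$ of $P_k$ could not both contain $w$. For $c_1\geq\binom{2k-1}{2}$, take $G=K_{2k}^{3}\sqcup K_{n-2k}^{3}$; since $n\geq 4k$ forces both components to have at least $2k$ vertices, we obtain $\delta_1(G)=\binom{2k-1}{2}$, and the $2k$-vertex component cannot host a $P_k$ because $P_k$ has $2k+1$ distinct vertices.

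The main obstacle will be the lower bound $c_2(n,P_k)\geq k-3$. The natural first attempt is a star-union construction in which $E(G)$ consists of all triples meeting a fixed set $W\subseteq V$ with $|W|=m$: this gives $\delta_2(G)=m$, and because each vertex of $P_k$ belongs to at most two of its $k$ edges, $P_k$ is blocked in $G$ whenever $m\leq\lceil k/2\rceil-1$. This recovers the required lower bound $k-3$ cleanly for $k\in\{4,5\}$, so the hard part will be closing the pigeonhole slack for $k\geq 6$: one approach is to iterate the star-union together with an auxiliary $P_{k-1}$-blocking 3-graph on $V\setminus W$; another is to add carefully chosen edges inside $V\setminus W$ so that the codegrees of pairs there reach $k-3$ while a distinguished vertex still admits no $P_k$ through it.
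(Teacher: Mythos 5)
Your two upper bounds and both lower bounds for $c_1(n,P_k)$ are correct and essentially identical to the paper's arguments: the codegree upper bound is the same greedy embedding (the paper fixes the free vertex of each new edge before invoking the codegree condition rather than after, but the count $(2k-1)-(2k-2)\geq 1$ is the same), the degree upper bound is the same link-graph extension with the same threshold $\binom{n-1}{2}-\binom{n-2k+1}{2}$, and your two constructions are exactly the paper's $G_7=\{w\}\vee\binom{V\setminus\{w\}}{2}$ (Observation \ref{p2lower bound}) and $G_9=K_{2k}^3\cup K_{n-2k}^3$ (Observation \ref{k2kob}).

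The genuine gap is the lower bound $c_2(n,P_k)\geq k-3$ in part (i). Your construction --- all triples meeting a fixed $m$-set $W$ --- indeed has $\delta_2=m$, and it excludes $P_k$ only because each vertex of a linear path lies in at most two of its $k$ edges, forcing $|W\cap V(P_k)|\geq\lceil k/2\rceil$; so it works only for $m\leq\lceil k/2\rceil-1$, which is strictly smaller than $k-3$ for every $k\geq 6$. As you acknowledge, this proves the claimed bound only for $k\in\{4,5\}$, and neither of your sketched repairs is carried out; note in particular that any edges you add inside $V\setminus W$ to raise the codegree of pairs there are edges disjoint from $W$, which destroys the very property (every edge meets $W$) on which your pigeonhole rests, so the ``iterated star-union'' idea needs a genuinely new argument for why some vertex remains uncovered. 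The paper takes a different route here: a complete bipartite $3$-graph $G_8=(A,B)$ with $|A|=k-2$ in which every edge meets both parts, with the minimum codegree attained by pairs inside $B$ and the absence of $P_k$ argued by comparing $|A|$ with the part sizes of $P_k$ viewed as a bipartite $3$-graph. You should, however, examine that construction critically before adopting it: for $k=4$ and $|A|=\{a_1,a_2\}$, the linear $4$-path with connector vertices $v_3=a_1$, $v_7=a_2$ and all seven remaining vertices in $B$ has every edge meeting both parts, so under the natural reading of ``complete bipartite $3$-graph'' it embeds in $G_8$; the lower bound of part (i) for general $k$ therefore appears to be the genuinely delicate point of the proposition, and as it stands your proposal does not establish it for $k\geq 6$.
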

\par
	The rest of the paper is arranged as follows. In Section 2, we give some extremal constructions and proofs of theorems for generalized triangle covering. And in Sections 3 we give some extremal constructions and proofs of theorems for some trees covering.
	
	\section{Results on generalized triangle covering}
	\subsection{Construction}
		\begin{Constrution} \label{mmm}
		Let $V_1$ be a vertex set. Fix $u\in V_1$, let $V'=V_1\setminus\{u\}$, $E_1=\{u\}\vee \binom{V'}{2}$ which means $E_1$ is a 3-set family and every 3-set from $E_1$ contains $u$ and two other vertices form $V'$. Let $G_1=(V_1,E_1)$ be a 3-graph. 
	\end{Constrution}
	The following observation can be checked directly.
	\begin{ob}\label{vvv}
		$G_1$ is a $3$-graph with $\delta_2(G_1)=1$ and there is no generalized triangle $T$ covering $u$.
	\end{ob}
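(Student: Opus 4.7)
The plan is to verify the two assertions directly from the construction, treating them in turn.

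For the codegree claim, I would split pairs $S \in \binom{V_1}{2}$ into two cases depending on whether $u \in S$. If $S = \{u,v\}$ with $v \in V'$, then by the very definition of $E_1 = \{u\} \vee \binom{V'}{2}$, every triple $\{u,v,w\}$ with $w \in V' \setminus \{v\}$ is an edge, so $d_{G_1}(S) = |V'|-1 = n-2$. If instead $S = \{v,w\} \subset V'$, then the only edge of $G_1$ containing both $v$ and $w$ is $\{u,v,w\}$, giving $d_{G_1}(S)=1$. Taking the minimum across both cases yields $\delta_2(G_1)=1$ whenever $n \geq 3$.

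For the covering claim, I would exploit the ``star'' structure that every edge of $G_1$ passes through $u$. Suppose for contradiction that $G_1$ contains a copy of the generalized triangle $T$ covering $u$; then $u$ is a vertex of this copy, and all three of its edges must be edges of $G_1$ and therefore must contain $u$. So under the labelling $V(T)=\{v_1,\ldots,v_5\}$, $u$ would have to be a vertex appearing in all three of the edges $v_1v_2v_3$, $v_1v_2v_4$, $v_3v_4v_5$. However, a direct inspection of this edge list shows that no single label lies in all three edges: $v_1$ and $v_2$ are missing from $v_3v_4v_5$, $v_5$ is missing from the first two edges, and each of $v_3,v_4$ is missing from exactly one of the first two edges. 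This contradiction rules out any copy of $T$ through $u$.

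There is no real obstacle here beyond bookkeeping; the only subtlety is to phrase the covering hypothesis correctly, namely that a copy of $T$ covering $u$ means an embedding whose image uses $u$ as one of its five vertices, so that $u$ is forced to play the role of a vertex common to all three edges of $T$. Once this is made explicit, both parts of the observation follow with no further work.
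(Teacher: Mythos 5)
Your proof is correct and is exactly the direct verification the paper has in mind (the paper states only that the observation ``can be checked directly'' and gives no written proof): the codegree computation by cases on whether the pair contains $u$, and the observation that every edge of $G_1$ contains $u$ while the three edges of $T$ have empty common intersection, together establish both claims.
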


	\begin{Constrution} \label{cg}
		Let $k$ be an integer with $k\geq 4$. Let $G_2=(V_2,E_2)$ be a 3-graph with $V_2$=$\left\{u\right\} \cup \sum_{i=1}^{k}C_{i}$ where $C_{i}$ is a 3-vertex set for $i \in [1,k]$. $E_2$ consists of two types of edges.  For the first type, edges induced in the vertex set $\left\{u\right\} \cup C_{i} $ form a  $K_4^3$ for any $i \in [1,k]$. For the second type, let $C_{a}, C_{b}$ and $C_{c} $ be any three elements in $\left\lbrace   C_{i} : i \in [1,k] \right\rbrace $. Without loss of generality, we assume  $C_{a}$ is $\left\lbrace v_1,v_2,v_3\right\rbrace$,  $C_{b}$ is $\left\lbrace v_4,v_5,v_6\right\rbrace  $ and $C_{c}$ is $\left\lbrace v_7,v_8,v_9\right\rbrace  $. The edges induced in $C_{a}, C_{b}$ and $C_{c} $ are:
		\begin{equation*}
			\left\{
			\begin{array}{c}
				\{v_1,v_4,v_7\},\{v_2,v_4,v_8\},\{v_3,v_4,v_9\};\\
				\{v_1,v_5,v_8\},\{v_2,v_5,v_9\},\{v_3,v_5,v_7\};\\
				\{v_1,v_6,v_9\},\{v_2,v_6,v_7\},\{v_3,v_6,v_8\};
			\end{array}
			\right\}
		\end{equation*}
		
	\end{Constrution}
	
	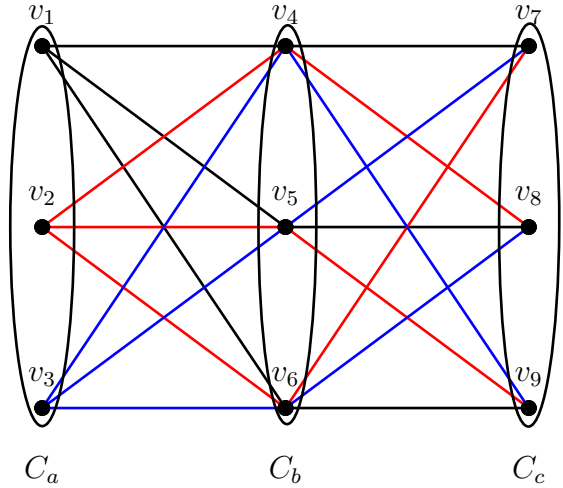
\begin{figure}[!htbp]
		\centering
		\begin{tikzpicture}[scale=0.8]
			\draw [line width=1.pt] (0.,3.)-- (4.,3.);
			\draw [line width=1.pt] (0.,3.)-- (4.,0.);
			\draw [line width=1.pt] (0.,3.)-- (4.,-3.);
			
			\draw [line width=1.pt,color=black] (4.,3.)-- (8.,3.);
			\draw [line width=1.pt] (4.,0.)-- (8.,0.);
			\draw [line width=1.pt] (4.,-3.)-- (8.,-3.);
			
			\draw [line width=1.pt,color=red] (0.,0.)-- (4.,3.);
			\draw [line width=1.pt,color=red] (4.,3.)-- (8.,0.);
			\draw [line width=1.pt,color=red] (0.,0.)-- (4.,0.);
			\draw [line width=1.pt,color=red] (4.,0.)-- (8.,-3.);
			\draw [line width=1.pt,color=red] (0.,0.)-- (4.,-3.);
			\draw [line width=1.pt,color=red] (4.,-3.)-- (8.,3.);
			
			\draw [line width=1.pt,color=blue] (0.,-3.)-- (4.,3.);
			\draw [line width=1.pt,color=blue] (0.,-3.)-- (4.,0.);
			\draw [line width=1.pt,color=blue] (0.,-3.)-- (4.,-3.);
			\draw [line width=1.pt,color=blue] (4.,3.)-- (8.,-3.);
			\draw [line width=1.pt,color=blue] (4.,0.)-- (8.,3.);
			\draw [line width=1.pt,color=blue] (4.,-3.)-- (8.,0.);
			\draw [rotate around={90.:(0.,0.01353479251559942)},line width=1.pt] (0.,0.01353479251559942) ellipse (3.3129244935181337cm and 0.5251939203160844cm);
			\draw [rotate around={89.88896180661943:(4.030770594308117,0.03889161752098947)},line width=1.pt] (4.030770594308117,0.03889161752098947) ellipse (3.305423503513939cm and 0.4755463239593543cm);
			\draw [rotate around={89.780055669578:(8.005452913901342,0.03255241126959792)},line width=1.pt] (8.005452913901342,0.03255241126959792) ellipse (3.339579642760312cm and 0.4946002413591508cm);
			\draw[color=black,fill=black] (0.,0.) node[above] {} circle (3.5pt);
			\draw[color=black,fill=black] (0.,3.) node[above] {} circle (3.5pt);
			\draw[color=black,fill=black] (0.,-3.) node[below=0.5] {$C_a$}circle (3.5pt);
			
			\draw[color=black,fill=black] (4.,0.) node[above] {} circle (3.5pt);
			\draw[color=black,fill=black] (4.,3.) node[above] {} circle (3.5pt);
			\draw[color=black,fill=black] (4.,-3.) node[below=0.5] {$C_b$} circle (3.5pt);
			
			\draw[color=black,fill=black] (8.,0.) node[above] {} circle (3.5pt);
			\draw[color=black,fill=black] (8.,3.) node[above] {} circle (3.5pt);
			\draw[color=black,fill=black] (8.,-3.) node[below=0.5] {$C_c$} circle (3.5pt);
			\draw[color=black,fill=black] (0.,3.) node[above=0.15] {$v_1$}circle (3.5pt);
			\draw[color=black,fill=black] (0.,0.) node[above=0.15] {$v_2$}circle (3.5pt);
			\draw[color=black,fill=black] (0.,-3.) node[above=0.15] {$v_3$}circle (3.5pt);
			
			\draw[color=black,fill=black] (4.,3.) node[above=0.15] {$v_4$}circle (3.5pt);
			\draw[color=black,fill=black] (4.,0.) node[above=0.15] {$v_5$}circle (3.5pt);
			\draw[color=black,fill=black] (4.,-3.) node[above=0.15] {$v_6$}circle (3.5pt);
			
			\draw[color=black,fill=black] (8.,3.) node[above=0.15] {$v_7$}circle (3.5pt);
			\draw[color=black,fill=black] (8.,0.) node[above=0.15] {$v_8$}circle (3.5pt);
			\draw[color=black,fill=black] (8.,-3.) node[above=0.15] {$v_9$}circle (3.5pt);
		\end{tikzpicture}
		\caption{Edges induced in $C_{a}, C_{b}$ and $C_{c} $. } \label{CCC}
	\end{figure}
	
	In Construction \ref{cg}, the subgraph induced in every three elements of $\{C_i\}$ is isomorphic to the 3-graph in Figure \ref{CCC}. And we get the following observation for the Construction \ref{cg}. 
	
	\begin{ob} \label{bbb}
		$G_2$ is a $3$-graph with $\delta_2(G_2)= 2$ and there is no generalized triangle $T$ covering $u$.
	\end{ob}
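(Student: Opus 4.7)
The plan is to verify the two assertions separately, exploiting the two-tier structure of Construction \ref{cg}: Type 1 edges live inside each $K_4^3$ on $\{u\}\cup C_i$, while Type 2 edges (the Latin-square triples) contain exactly one vertex from each of three distinct $C_i$'s and never contain $u$. In particular, every edge through $u$ is Type 1; every Type 2 edge is a transversal of three different clusters; and the $C_i$'s partition $V_2\setminus\{u\}$, so each vertex lies in a unique $C_i$.

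To show $\delta_2(G_2)=2$ I would split on where the pair $\{x,y\}$ sits. If $\{x,y\}=\{u,y\}$ with $y\in C_i$, the only co-neighbors are the two other vertices of $C_i$ (from Type 1), giving degree $2$. If $x,y\in C_i$, the only co-neighbors are $u$ and the third vertex of $C_i$, again $2$. If $x\in C_i$ and $y\in C_j$ with $i\neq j$, then there are no Type 1 edges through $\{x,y\}$; from the Latin-square pattern, for each of the $k-2$ other clusters $C_\ell$ there is exactly one $z\in C_\ell$ with $\{x,y,z\}\in E_2$, so the codegree is $k-2\geq 2$. Hence $\delta_2(G_2)=2$, attained by the pairs of the first two types.

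The main work, and the real obstacle, is to rule out any generalized triangle through $u$. I would suppose some $T$ with vertex set $\{v_1,\dots,v_5\}$ and edges $\{v_1v_2v_3\},\{v_1v_2v_4\},\{v_3v_4v_5\}$ covers $u$, and case-split on which $v_j$ equals $u$. If $u\in\{v_1,v_2\}$, say $u=v_1$, both edges $\{u,v_2,v_3\}$ and $\{u,v_2,v_4\}$ are Type 1, forcing $v_2,v_3,v_4$ all to lie in a common cluster $C_i$; then $\{v_3,v_4,v_5\}$ must also be Type 1 (Type 2 edges are transversal), which would require $v_5\in\{u,v_2\}$, contradicting distinctness. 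The case $u\in\{v_3,v_4\}$ is analogous and collapses in the same way. The interesting case is $u=v_5$: then $\{u,v_3,v_4\}$ Type 1 places $v_3,v_4$ in the same $C_i$, and one checks that the remaining two edges cannot both be Type 1 (this would force $v_3=v_4$) nor mixed (a Type 1 edge not through $u$ equals some $C_j$, again forcing $v_3$ and $v_4$ into the same cluster differently), so both $\{v_1,v_2,v_3\}$ and $\{v_1,v_2,v_4\}$ are Type 2. But the Latin-square property says that for any $v_1,v_2$ in two fixed clusters and any third cluster $C_i$, there is a \emph{unique} $z\in C_i$ with $\{v_1,v_2,z\}$ an edge, which precludes $v_3\neq v_4$. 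The trickiest step is this last sub-case, where one must really use that the $3\times 3$ Type 2 incidences between any two clusters (with the third fixed) form a Latin square, so it deserves to be written out in full; the rest is bookkeeping.
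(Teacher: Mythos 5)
Your proposal is correct and follows essentially the same route as the paper: verify $\delta_2(G_2)=2$ by cases on where the pair sits relative to the clusters, and rule out a copy of $T$ through $u$ by cases on which of the three positions $u$ occupies, using that every edge through $u$ lies inside a single $\{u\}\cup C_i$ while Type 2 edges are transversals of three distinct clusters. If anything, your handling of the case $u=v_5$ (the paper's $T^1$) is more complete than the paper's, which only asserts ``by the definition of $G_2$'' at the point where you correctly isolate the Latin-square uniqueness of the third vertex as the fact that is actually needed.
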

	
	\begin{proof}
		We first check that $G_2$ has no generalized triangle $T$ covering $u$. If $u$ is covered as the first case in Figure 1, then there is an edge $e_0=\{u,v_1,v_2\}$ such that $v_1,v_2\in C_i$ for $i \in [1,k]$. By the definition of $G_2$, we can not find two vertices $v_3,v_4$ from $V_2\setminus\{u,v_1,v_2\}$ making $\{v_1,v_3,v_4\},\{v_2,v_3,v_4\}$  being edges in $E_2$, a contradiction with the fact that there is a $T^1$ covering $u$. If $u$ is covered as the second case in Figure 1, then there are two edges $e_1=\{u,v_5,v_6\}$ and $e_2=\{u,v_7,v_8\}$ such that $v_5,v_6\in C_i$ and $v_3,v_4\in C_j$ for $i\neq j $ and $ i,j\in [1,k]$. However, there is no edge  induced in any two $C_i's$ in $G_2$ , which means we can not find an edge together with $e_1$ and $e_2$ to form a $T^2$ covering $u$, a contradiction. If $u$ is covered as the third case in Figure 1, then there are two edges $e_3=\{u,v_9,v_{10}\}$ and $e_4=\{u,v_9,v_{11}\}$ such that $v_9,v_{10},v_{11}\in C_i$ for some $ i\in [1,k]$. Actually, there is no vertex $v_{12}$ making $\{v_{10},v_{11},v_{12}\}$ being an edge in $G_2$, a contradiction with the fact that there is a $T^3$ covering $u$.
		
		Next we prove  that $\delta_2(G_2)= 2$. Let $s$,$t$ be any two vertices in $V(G_2)$. We have:
		
		\begin{itemize}
			\item If the two vertices $s,t$ belong to different $C_i's$, then  $d_G(\{s,t\})\geq 2.$
			\item If the two vertices $s,t$ belong to any $C_i$, then  $d_G(\{s,t\})=2.$
			\item If the vertex $s$ is $u$ and the vertex $t$ belongs to any $C_i$, then $d_G(\{s,t\})=2.$
		\end{itemize}\par 
		In conclusion, we have $\delta_2(G_2)= 2$ and there is no generalized triangle $T$ covering $u$.
		
	\end{proof}

	\begin{Constrution}
		Let $G_3=(V_3,E_3)$ be an $n$-vertex 3-graph with $V_3$=$\left\{u\right\} \cup  A_1 \cup  A_2\cup  B$ and $E_3=\left( \{u\}\vee\binom{A_1}{1}\vee\binom{A_2}{1}\right)\cup \left( \binom{A_1}{1}\vee\binom{A_2}{1}\vee\binom{B}{1}\right)\cup \binom{B}{3}$, where $|A_1|=|A_2|=\lceil \frac{n}{3} \rceil.$
	\end{Constrution}
	\begin{figure}[!htbp]
		\centering
		\begin{tikzpicture}[scale=0.6]
			\draw [line width=1.pt] (6.,1.)node [left] {\large $A_1$} circle (1.6cm);
			\draw [line width=1.pt] (14.,1.)node [right] {\large $A_2$} circle (1.6cm);
			\draw [line width=1.pt] (10.,-3.)node [left] {\large $B$} circle (1.6);
			\draw [line width=1.pt,color=blue] (10.,-3.)-- (9,-4);
			\draw [line width=1.pt,color=blue] (9,-4)-- (11.,-4.);
			\draw [line width=1.pt,color=blue] (11.,-4.)-- (10.,-3.);
			
			\draw [line width=1.pt,color=blue] (6,0.6)-- (14,0.6);
			\draw [line width=1.pt,color=blue] (14,0.6)-- (10,-2.4);
			\draw [line width=1.pt,color=blue] (10,-2.4)-- (6,0.6);
			
			\draw [line width=1.pt,color=blue] (10.,4.)-- (6,1.3);
			\draw [line width=1.pt,color=blue] (14,1.3)-- (6,1.3);
			\draw [line width=1.pt,color=blue] (10.,4.)-- (14,1.3);
			\draw[color=black,fill=black] (10.,4.) node[above] {$u$} circle (3.5pt);
		\end{tikzpicture}
		\caption{Construction $3$}
	\end{figure}
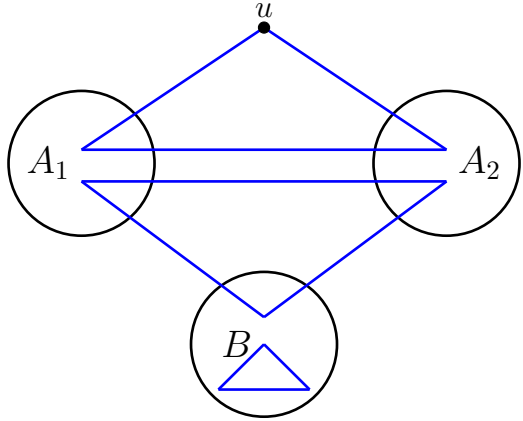
	\begin{ob} \label{nnn}
		$G_3$ is a $3$-graph with $\delta_1(G_3)\geq\frac{n^2}{9}$ and there is no generalized triangle $T$ covering $u$.
	\end{ob}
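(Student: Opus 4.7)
The plan is to verify the two assertions separately: the lower bound on $\delta_1(G_3)$ is a routine counting argument, and the non-existence of a generalized triangle covering $u$ is handled by a short case analysis on the role of $u$ in $T$.

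For the degree bound, I would compute $d_{G_3}(x)$ according to which part of $V_3$ the vertex $x$ lies in. For $x=u$, the edges containing $u$ are precisely the triples $\{u,a_1,a_2\}$ with $a_1\in A_1$ and $a_2\in A_2$, so $d_{G_3}(u)=|A_1|\,|A_2|=\lceil n/3\rceil^2\ge n^2/9$. For $x\in A_1$, the edges containing $x$ come from the first two families and contribute $|A_2|+|A_2|\cdot|B|=|A_2|(|B|+1)$; by symmetry the same holds for $x\in A_2$ with $A_1$ and $A_2$ swapped. For $x\in B$, the edges containing $x$ come from the last two families and contribute $|A_1|\,|A_2|+\binom{|B|-1}{2}$. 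In each case the expression is at least $\lceil n/3\rceil^2\ge n^2/9$, which gives the lower bound on $\delta_1(G_3)$.

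For the covering statement, the main structural features I would record first are: (a) every edge of $G_3$ contains at most one vertex of $A_1$ and at most one vertex of $A_2$; (b) every edge containing $u$ contains exactly one vertex of $A_1$ and one vertex of $A_2$; and (c) every edge not containing $u$ either lies entirely in $B$ or uses one vertex from each of $A_1,A_2,B$. Suppose for contradiction that there is a generalized triangle $T$ on $\{v_1,v_2,v_3,v_4,v_5\}$ with edges $\{v_1v_2v_3\},\{v_1v_2v_4\},\{v_3v_4v_5\}$ containing $u$. Using the three positions from Figure \ref{fig:Tcover}, I would split into cases.

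If $u\in\{v_1,v_2\}$ (position $T^1$), then two edges of $T$ contain $u$ and share the other endpoint $v_{3-i}$, which by (b) must lie in $A_1\cup A_2$; then both $v_3$ and $v_4$ lie in the opposite side, contradicting (a) applied to $\{v_3v_4v_5\}$. If $u\in\{v_3,v_4\}$ (position $T^3$), say $u=v_3$, then $\{v_1v_2u\}$ and $\{uv_4v_5\}$ force, by (b), that one of $v_1,v_2$ lies in $A_1$ and the other in $A_2$, and also $v_4\in A_1\cup A_2$; then the remaining edge $\{v_1v_2v_4\}$ contains two vertices of the same $A_i$, contradicting (a). Finally if $u=v_5$ (position $T^2$), then by (b) one of $v_3,v_4$ is in $A_1$ and the other in $A_2$, say $v_3\in A_1$ and $v_4\in A_2$; the edge $\{v_1v_2v_3\}$ does not contain $u$ and contains the $A_1$-vertex $v_3$, so by (c) one of $v_1,v_2$ is in $A_2$ and the other in $B$, while the edge $\{v_1v_2v_4\}$ similarly forces one of $v_1,v_2$ to be in $A_1$ and the other in $B$; the pair $\{v_1,v_2\}$ cannot simultaneously meet $A_1$ and $A_2$ and have a vertex in $B$ shared by both requirements, giving the final contradiction. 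The main obstacle is just keeping the case analysis for $u=v_5$ straight, since that is the only position that uses all three edge families of $G_3$ and requires (c) rather than only (a)--(b).
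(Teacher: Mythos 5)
Your proof follows essentially the same route as the paper's: the same case-by-case degree count over $u$, $A_1\cup A_2$ and $B$, and the same three-way case analysis on the position of $u$ in a putative copy of $T$ (your covering argument, via the recorded properties (a)--(c), is in fact spelled out more carefully than the paper's). Two remarks. First, your labels are permuted relative to the paper's convention: in the paper $T^1$ is the position where $u$ is the degree-one vertex ($u=v_5$), $T^2$ is $u\in\{v_3,v_4\}$, and $T^3$ is $u\in\{v_1,v_2\}$; this does not affect the observation itself, which only asserts that no copy of $T$ covers $u$, but the names matter elsewhere in the paper. Second, your inequality for vertices of $A_1\cup A_2$, namely $|A_2|(|B|+1)=\lceil n/3\rceil\left(n-2\lceil n/3\rceil\right)\ge \lceil n/3\rceil^2$, holds only when $3\mid n$ (for $n=7$ the left side is $3$ while $n^2/9=49/9$; for $n=3m+1$ it is $m^2-1<n^2/9$); the paper's own proof asserts the very same inequality, so this is an inherited defect rather than one you introduced, but as written the bound $\delta_1(G_3)\ge n^2/9$ is only correct up to an $O(n)$ error term when $3\nmid n$.
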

	\begin{proof}
		We check that $G_3$ has no generalized triangle $T$ covering $u$. If $u$ is covered as the first case in Figure 1, then there is an edge $e_0=\{u,v_1,v_2\}$ such that $v_1,v_2$ are in different $A_i$ for $i=1,2$. By the definition of $G_3$, we can not find two vertices $v_3,v_4$ from $V_3\setminus\{u,v_1,v_2\}$ such that $\{v_1,v_3,v_4\},\{v_2,v_3,v_4\}$ are edges in $E_3$, a contradiction with the fact that there is a $T^1$ covering $u$. If $u$ is covered as the second case in Figure 1, then there are two edges $e_1=\{u,v_5,v_6\}$ and $e_2=\{u,v_7,v_8\}$ such that both $v_5,v_6$ and $v_7,v_8$ are in different $A_i$ for $i=1,2$. However, there is no edge induced in $A_1$ and $A_2$ in $G_3$, which means we can not find an edge together with $e_1$ and $e_2$ to form a $T^2$ covering $u$, a contradiction. If $u$ is covered as the third case in Figure 1, then there are two edges $e_3=\{u,v_9,v_{10}\}$ and $e_4=\{u,v_9,v_{11}\}$ such that  $v_{10},v_{11}$ are in one of $A_i$ and $v_9$ in another one for $i=1,2$. Actually, there is no vertex $v_{12}$ making $\{v_{10},v_{11},v_{12}\}$ being an edge in $G_3$, a contradiction with the fact that there is a $T^3$ covering $u$.
		
		Next we prove that $\delta_1(G_3)\geq\frac{n^2}{9}$. Let $v$ be a vertex from $V(G_3)$.\par
		If $v=u$, then $$d_{G_3}(\{v\})=\lceil \frac{n}{3} \rceil \cdot \lceil \frac{n}{3} \rceil\geq \frac{n^2}{9}.$$\par 
		If $v\in A_1 \cup A_2$, then $$d_{G_3}(\{v\})=\lceil \frac{n}{3} \rceil+\lceil \frac{n}{3} \rceil \cdot (n-1-2\lceil \frac{n}{3} \rceil)\geq\frac{n^2}{9}.$$\par 
		If $v\in B$, then $$d_{G_3}(\{v\})=\lceil \frac{n}{3} \rceil \cdot \lceil \frac{n}{3} \rceil+\binom{n-1-2\lceil \frac{n}{3} \rceil}{2}>\lceil \frac{n}{3} \rceil \cdot \lceil \frac{n}{3} \rceil\geq \frac{n^2}{9}.$$
		Therefore, we have $\delta_1(G_3)\geq\frac{n^2}{9}$.
		
	\end{proof}
	\subsection{The proofs of Theorem \ref{ttt2}}
	We divide the proof of Theorem \ref{ttt2} into two parts according to the value of $n$.
		\subsubsection {When $n\in [5,10]$}
	
	When $n\in [5,10]$, the lower bound of $c_2(n,T)$ can be directly gotten from Observation \ref{vvv}. Therefore, we only need to  prove $c_2(n,T) \leq1$ when $n\in [5,10]$. 
	We assume to the contrary that there is a 3-graph $G$ with $\delta_2(G)\geq 2$ and a vertex $u\in V(G)$ that is not covered by $T$.\par
	Let $y\in V(G)$ be a vertex different from $u$. As $d_G(\{u,y\})\geq 2$, $N_G(\{u,y\})$ has at least two vertices. Considering any two vertices $p$, $q$ from $N_G(\{u,y\})$, we have $N_G(\{p,q\})\subseteq \{u,y\}$. Otherwise, if there is a vertex $t\in N_G(\{p,q\})$ different from $u$ and $y$, then $\{\{p,q,t\},\{u,y,p\},\{u,y,q\}\}$ is a $T$ covering $u$, a contradiction. On the other hand, as $\delta_2(G)\geq 2$, we have $N_G(\{p,q\})= \{u,y\}$. A direct corollary is that $G[\{u,y,p,q\}]=K_4^3$. What's more, any two vertices from $\{u,y,p,q\}$ have codegree 2 in $G$. Otherwise, we can find a $T$ covering $u$, a contradiction.\par 
	Now consider the link graphs of vertices $y,p,q$, we denote them by $G_y,G_p$ and $G_q$, respectively. Let $G_a$ be the 3-graph obtained by deleting the vertices $u,p,q$ (and related edges) from $G_y$, $G_b$ be the 3-graph obtained by deleting the vertices $u,y,q$ (and related edges) from $G_p$, and $G_c$ be the 3-graph obtained by deleting the vertices $u,y,p$ (and related edges) from $G_q$. As $\delta_2(G)\geq 2$ and any two vertices in $\{u,y,p,q\}$ have no co-neighbor out of $\{u,y,p,q\}$, we have $\delta _1(G_a)\geq 2$, $\delta _1(G_b)\geq 2$ and $\delta _1(G_c)\geq 2$. Actually, $G_a, G_b$ and $G_c$ are simple graphs defined on the same vertex set. As $n\in [5,10]$, we have:
	$$ e(G_a)+e(G_b)+e(G_c)\geq \frac{2(n-4)}{2}3 =3(n-4)> \binom{n-4}{2}.$$
	The inequality above implies that there must be at least one edge contained in at least two graphs of $G_a, G_b$ and $G_c$. Without loss of generality, let $\{s,t\}$ be the common edge in $G_a$ and $ G_b$. As a result, we can find a $T=\{\{s,t,y\},\{s,t,p\},\{u,y,p\}\}$ covering $u$, a contradiction.
	
	\subsubsection {When $n\geq 11$}
	We first consider the case for  $n\geq 11$ and $n-1 \equiv 0 \pmod {3}$. By Observation \ref{bbb}, we have $c_2(n,T)\geq 2$ for $n\geq 11$ and $n-1 \equiv 0 \pmod {3}$. Therefore, we only need to  prove $c_2(n,T) \leq2$ for $n\geq 11$ and $n-1 \equiv 0 \pmod {3}$. 
	We suppose to the contrary that there is an $n$-vertex 3-graph $G$ with $\delta_2(G)\geq 3$ for $n\geq 11$ and $n-1 \equiv 0 \pmod {3}$  and a vertex $u\in V(G)$ that is not covered by $T$.\par
	Let $y$ be any other vertex different from $u$ in $G$. As $\delta_2(G)\geq 3$, we have $d_G(\{u,y\})\geq 3$. Therefore, we can find two edges containing $\{u,y\}$. Let the two edges be $\{u,y,p\}$ and $\{u,y,q\}$. Considering $d_G(\{p,q\})\geq 3$, we can find a vertex $o$ different from $u$ and $y$, such that $\{o,p,q\}$ forms an edge in $G$. Therefore, we find a generalized triangle $T$ with the edge set $\{\{u,y,p\},\{u,y,q\},\{o,p,q\}\}$ covering $u$, a contradiction.\par 
	Next, we consider the case for $n\geq 11~ \text{and}~  n-1 \equiv 1,2 \pmod {3}$. By Observation \ref{vvv}, we have $c_2(n,T)\geq 1$ for $n\geq 11$ and $n-1 \equiv 1,2 \pmod {3}$. Therefore, we only need to  prove $c_2(n,T) \leq1$ for $n\geq 11$ and $n-1 \equiv 1,2 \pmod {3}$. We suppose to the contrary that there is an $n$-vertex 3-graph $G$ with $\delta_2(G)\geq 2$ for $n\geq 11$ and $n-1 \equiv 1,2 \pmod {3}$ and a vertex $u\in V(G)$ that is not covered by $T$.\par
	Let $v_1\in V(G)$ be a vertex in $V(G)$ different from $u$. As $\delta_2(G)\geq 2$, we have $d_G(\{u,v_1\})\geq 2$ and $N_G(\{u,v_1\})$ has at least two vertices. Considering any two vertices $v_2$, $v_3$ in $N_G(\{u,v_1\})$, we have $N_G(\{v_2,v_3\})\subseteq \{u,v_1\}$. Otherwise, if there is a vertex $h\in N_G(\{v_2,v_3\})$ different from $u$ and $v_1$, then there is a generalized triangle $T$ with the edge set $\{\{v_2,v_3,h\},\{u,v_1,v_2\},\{u,v_1,v_3\}\}$  covering $u$, a contradiction. On the other hand, as $\delta_2(G)\geq 2$, we have $N_G(\{v_2,v_3\})= \{u,v_1\}$. Actually, we find $G[\{u,v_1,v_2,v_3\}]$ is a complete $3$-graph on $4$ vertices. What's more, any two vertices in $\{u,v_1,v_2,v_3\}$ have codegree 2, which means any two vertices in $\{u,v_1,v_2,v_3\}$ have no co-neighbor out of $\{u,v_1,v_2,v_3\}$.
	Otherwise, we can find a $T$ covering $u$, a contradiction.\par
	Let $v_4$ be a vertex in $V(G)$ different from $v_1,v_2,v_3$ and $u$. As $\delta_2(G)\geq 2$, we have $d_G(\{u,v_4\})\geq 2$ and $N_G(\{u,v_4\})$ has at least two vertices. Let $v_5$ and $v_6$ be any two vertices in $N_G(\{u,v_4\})$. The same as the above analysis, we have $N_G(\{v_5,v_6\})=\{u,v_4\}$ and  $G[\{u,v_4,v_5,v_6\}]$ is a complete $3$-graph on $4$ vertices. Continue to consider other vertices in this way. There must exist a lot of 3-vertex sets: $T_1=\{v_1,v_2,v_3\}$,  $T_2=\{v_4,v_5,v_6\}$,...,$T_l=\{v_{3l-2},v_{3l-1},v_{3l}\}$, such that edges induced in the vertex set $T_i\cup \{u\}$ form a complete $3$-graph on $4$ vertices for $i\in[1,l]$.  Apart from these $3l+1$ vertices, there are one or two vertices left since $n-1 \equiv 1,2 \pmod {3}$.
	\begin{itemize}
		\item If there is exactly one vertex left, let it be $a$. As $\delta_2(G)\geq 2$, we have $d_G(\{u,a\})\geq 2$ and $N_G(\{u,a\})$ has at least two vertices. For the vertex in $N_G(\{u,a\})$, it must be a vertex in a $T_i$ for $i\in[1,l]$. Without loss of generality, let $v_{3i}$ in $T_i$  be a vertex from $N_G(\{u,a\})$, which means $\{v_{3i},u,a\}$ is an edge in $G$. Then we find a generalized triangle $T$ with the edge set $\{\{v_{3i},u,a\},\{v_{3i-2},v_{3i-1},v_{3i}\},\{v_{3i-2},\\v_{3i-1},u\}\}$ covering $u$, a contradiction.
		\item If there are two vertices left, let them be $b$ and $c$. As $\delta_2(G)\geq 2$, we have $d_G(\{u,b\})\geq 2$ and $N_G(\{u,b\})$ has at least two vertices. For the vertex in $N_G(\{u,b\})\setminus\{c\}$, it must be a vertex in a $T_i$ for $i\in[1,l]$. Then through the same analysis as the case before for one vertex left, we can find a $T$ covering $u$, a contradiction.
	\end{itemize}
	\par 
	Therefore, we have $c_2(n,T)=1$ for $n\geq 11~ and~  n-1 \equiv 1,2 \pmod {3}$.
	\subsection{The proof of Theorem \ref{ccc}}
	\subsubsection {The proof of (i)}   We can directly get the lower bound of $c_1(n,T)$ from Observation \ref{nnn}. Therefore, it is sufficient to show that every 3-graph $G$ on $n$ vertices with $\delta_1(G)>\frac{n^2}{6}+\frac{5}{6}n-3$ has a $T$-covering. Suppose to the contrary that there is an $n$-vertex 3-graph $G$ with $\delta_1(G)>\frac{n^2}{6}+\frac{5}{6}n-3$  and a vertex $u\in V(G)$ that is not covered by $T$.\par
	Consider an edge $e=\{u,x,y\}$ containing $u$ in $G$. Let $G_x$, $G_y$ and $G_u$ be the link graphs of $x$, $y$ and $u$, respectively. Let $G_{x}^{'}$ be the 3-graph obtained by deleting the vertices $u,y$ (and related edges) from $G_x$, $G_{y}^{'}$ be the 3-graph obtained by deleting the vertices $u,x$ (and related edges) from $G_y$ and $G_{u}^{'}$ be the 3-graph obtained by deleting the vertices $x,y$ (and related edges) from $G_u$.
	Then $G_{x}^{'}$, $G_{y}^{'}$ and $G_{u}^{'}$ are simple graphs defined on the same $n-3$ vertices. As
	$$\delta_1(G)>\frac{n^2}{6}+\frac{5}{6}n-3,$$
	we have
	$$e(G_{x}^{'})+e(G_{y}^{'})+e(G_{u}^{'})>3\cdot (\frac{n^2}{6}+\frac{5}{6}n-3-2(n-3)-1)>\binom{n-3}{2},$$
	which means that there must be a common edge in at least two of $G_{x}^{'}$, $G_{y}^{'}$ and $G_{u}^{'}$. Without loss of generality, let $\{s,t\}$ be the common edge in both $G_{x}^{'}$ and $ G_{y}^{'}$. As a result, we can find a $T=\{\{s,t,x\},\{s,t,y\},\{u,x,y\}\}$ covering $u$, a contradiction.
	\subsubsection {The proof of (ii)}
	Let $G$ be a 3-graph with $\delta_1(G)> \frac{n^2}{6}+\frac{5}{6}n-3 $. We will prove for any vertex $u$, there is a generalized triangle $T^1$ or $T^2$ covering $u$. \par
	 For any vertex 	$u\in V(G)$, as $d_G(\{u\})>1$, there is an edge $\{u,x,y\}$ containing $u$. Let $G_x$, $G_y$ and $G_u$ be the link graphs of $x$, $y$ and $u$, respectively. Let $G_{x}^{'}$ be the 3-graph obtained by deleting the vertices $u,y$ (and related edges) from $G_x$, $G_{y}^{'}$ be the 3-graph obtained by deleting the vertices $u,x$ (and related edges) from $G_y$, and $G_{u}^{'}$ be the 3-graph obtained by deleting the vertices $x,y$ (and related edges) from $G_u$. Then $G_{x}^{'}$, $G_{y}^{'}$ and $G_{u}^{'}$ are simple graphs defined on the same $n-3$ vertices. As $$\delta_1(G)> \frac{n^2}{6}+\frac{5}{6}n-3, $$
	we have
	$$e(G_{x}^{'})+e(G_{y}^{'})+e(G_{u}^{'})>3\cdot (\frac{n^2}{6}+\frac{5}{6}n-3-2(n-3)-1)>\binom{n-3}{2},$$
	which means that there must be a common edge in at least two of $G_{x}^{'}$, $G_{y}^{'}$ and $G_{u}^{'}$. If there is an edge $\{s,t\}$ in both $G_{x}^{'}$ and $ G_{y}^{'}$, we can find a $T^1=\{\{s,t,x\},\{s,t,y\},\{u,x,\\y\}\}$ covering $u$. If there is an edge $\{p,q\}$ in both $G_{x}^{'}$ and $ G_{u}^{'}$, we can find a $T^2=\{\{p,q,x\},\{p,q,u\},\{x,y,u\}\}$ covering $u$. If there is an edge $\{g,h\}$ in both $G_{y}^{'}$ and $ G_{u}^{'}$, we can find a $T^2=\{\{g,h,y\},\{g,h,u\},\{u,x,y\}\}$ covering $u$.\par 
	In conclusion, if $G$ is  a 3-graph  satisfying that $\delta_1(G)> \frac{n^2}{6}+\frac{5}{6}n-3 $, then for any vertex $u\in V(G)$, we can find a generalized triangle $T^1$ or $T^2$ covering $u$. 

	\subsubsection {The proof of (iii)}
	Let $G$ be a 3-graph with $\delta_1(G)> \frac{n^2}{4}+\frac{1}{4}n-2$. We will prove for any vertex $u\in V(G)$, there are generalized triangles $T^1$ and $T^2$ covering $u$. \par
Since $d_G(\{u\})>1$, there is an edge $\{u,x,y\}$ containing $u$. Let $G_x$, $G_y$ and $G_u$ be the link graphs of $x$, $y$ and $u$, respectively. Let $G_{x}^{'}$ be the 3-graph obtained by deleting the vertices $u,y$ (and related edges) from $G_x$, $G_{y}^{'}$ be the 3-graph obtained by deleting the vertices $u,x$ (and related edges) from $G_y$, and $G_{u}^{'}$ be the 3-graph obtained by deleting the vertices $x,y$ (and related edges) from $G_u$.
	Then $G_{x}^{'}$, $G_{y}^{'}$ and $G_{u}^{'}$ are simple graphs defined on the same $n-3$ vertices. As
	$$\delta_1(G)>\frac{n^2}{4}+\frac{1}{4}n-2,$$
	we have
	$$e(G_{x}^{'})+e(G_{y}^{'})>2\cdot (\frac{n^2}{4}+\frac{1}{4}n-2-2(n-3)-1)>\binom{n-3}{2};$$
	$$e(G_{x}^{'})+e(G_{u}^{'})>2\cdot (\frac{n^2}{4}+\frac{1}{4}n-2-2(n-3)-1)>\binom{n-3}{2};$$
	which means that there must be a common edge in both $G_{x}^{'}$ and $G_{y}^{'}$ and a common edge in both $G_{x}^{'}$ and $G_{u}^{'}$. Without loss of generality, let $\{s,t\}$ be the common edge in $G_{x}^{'}$ and $ G_{y}^{'}$ and $\{p,q\}$ be the common edge in $G_{x}^{'}$ and $ G_{u}^{'}$. 
	As a result, we can find a $T^1=\{\{s,t,x\},\{s,t,y\},\{u,x,y\}\}$ covering $u$ and a $T^2=\{\{p,q,x\},\{p,q,u\},\{x,y,u\}\}$ covering $u$.\par 
	In conclusion, if $G$ is  a 3-graph  satisfying that  $\delta_1(G)> \frac{n^2}{4}+\frac{1}{4}n-2$, then for any vertex $u\in V(G)$, there are generalized triangles $T^1$ and $T^2$ covering $u$.\par
	
	\subsubsection {The proof of (iv)}
	Let $G$ be a 3-graph with $\delta_1(G)> \frac{\sqrt{5}-1}{4}n^2+O(n) $.\par 
	We first prove that for any vertex $u$ in $V(G)$, there is a generalized triangle $T^3$ covering $u$. Suppose to the contrary that there is an $n$-vertex 3-graph $G$ with $\delta_1(G)>\frac{\sqrt{5}-1}{4}n^2+O(n)$  and a vertex $u\in V(G)$ that is not covered by a $T^3$. \par
	Let $G_u$ be the link graph of $u$.  For any vertex $x\in V(G_u)$, let $N=N_{G_u}(\{x\})$ be the neighbor set of $x$ in $G_u$. Considering any vertex $x\in V(G_u)$ with $d_{G_u}(\{x\})\geq 2$, let $y,z$ be any two vertices in $N$. As there is no $T^3$ covering $u$, for any vertex $f\in V(G_u)-\{x,y,z\}$, we have $\{y,z,f\}$ is not an edge in $G$. We call these triples like $\{y,z,f\}$ as the non-edges.
	Considering the fact that every vertex $x\in V(G_u)$ with $d_{G_u}(\{x\})\geq 2$ is contained in no $T^3$, we collect the triples of these non-edges for every vertex $x\in V(G_u)$. And we denote  the multiset of the non-edges by $M$. Actually, every non-edge in $M$ can be repeated at most $3\cdot \Delta_{G_u}$ times, where $\Delta_{G_u}$ denotes the maximum degree of $G_u$.
	As a result, there are at least $m$ non-edges in $G$, in which 
	$m\geq \frac{|M|}{3\cdot \Delta_{G_u}}.$
	Counting the size of $M$, we have
	\begin{align*}
		|M|&=\sum\limits_{x\in V(G_u),d_{G_u}(\{x\})\geq 2} (n-4)\binom{d_{G_u}(\{x\})}{2}\\
		&=\frac{n-4}{2}\cdot	\sum\limits_{x\in V(G_u),d_{G_u}(\{x\})\geq 2} (d_{G_u}^2(\{x\})-d_{G_u}(\{x\}))\\
		&= \frac{n-4}{2}\cdot	\sum\limits_{x\in V(G_u),d_{G_u}(\{x\})\geq 2} ((d_{G_u}(\{x\})-\frac{1}{2})^2-\frac{1}{4})\\
		&\geq \frac{n-4}{2}\cdot	\sum\limits_{x\in V(G_u),d_{G_u}(\{x\})\geq 2} (d_{G_u}(\{x\})-\frac{1}{2})^2-\frac{n\cdot(n-4) }{8}.\\
	\end{align*}
	By handshaking theorem, we have:
	\begin{align*}
		\sum\limits_{v\in V(G_u),d_{G_u}(\{x\})\geq 2} d_{G_u}(\{x\})&\geq 2\cdot e(G_u)-n\\
		&\geq 2\cdot \delta_1(G) -n.
	\end{align*}
	And by H\"{o}lder inequality, we have:
	\begin{align*}
		\sum\limits_{x\in V(G_u),d_{G_u}(\{x\})\geq 2} (d_{G_u}(\{x\})-\frac{1}{2})^2&\geq \frac{(\sum\limits_{x\in V(G_u),d_{G_u}(\{x\})\geq 2} (d_{G_u}(\{x\})-\frac{1}{2}))^2}{\sum\limits_{x\in V(G_u),d_{G_u}(\{x\})\geq 2} 1}\\
		&\geq \frac{(2\cdot \delta_1(G) -n-\frac{1}{2}\cdot n)^2}{n}\\
		&=	\frac{4\cdot \delta_1(G)^2-6\cdot n\cdot \delta_1(G)+\frac{9}{4}\cdot n^2}{n}.
	\end{align*}
	
	Hence,
	\begin{align}
		m&\geq \frac{ \frac{n-4}{2}\cdot	\sum\limits_{x\in V(G_u),d_{G_u}(\{x\})\geq 2} (d_{G_u}(\{x\})-\frac{1}{2})^2-\frac{n\cdot(n-4) }{8}}{3n}\notag\\
		&\geq \frac{n-4}{2\cdot 3n}\cdot \frac{4\cdot \delta_1(G)^2-6\cdot n\cdot \delta_1(G)+\frac{9}{4}\cdot n^2}{n} -\frac{n\cdot(n-4) }{8\cdot 3n}\notag\\
		&\geq \frac{2(n-4)}{3n^2}\delta_1^2-\frac{n-4}{n}\delta_1-\frac{n-4 }{3}\label{ppp}.
	\end{align}
	
	On the other hand, as $e(G_u)=d_1(\{u\})\geq \delta_1(G)$, there are at most $m$ non-edges in $G$, in which
	\begin{equation}\label{ooo}
		m\leq \binom{n}{3}-\frac{\delta_1(G)\cdot n}{3}.
	\end{equation}
	As $\delta_1(G)> \frac{\sqrt{5}-1}{4}n^2+O(n)$, we have a contradiction between (\ref{ppp}) and (\ref{ooo}), which means that every vertex in $G$ can be covered by a $T^3$ if $ \delta_1(G)> \frac{\sqrt{5}-1}{4}n^2+O(n).$ As 
	$$\frac{\sqrt{5}-1}{4}n^2+O(n)>\frac{n^2}{4}+\frac{1}{4}n-2,$$
	combining with the proof of (iii) we have for any vertex in $G$, we can find generalized triangles $T^1$ and $T^2$ covering it if $ \delta_1(G)> \frac{\sqrt{5}-1}{4}n^2+O(n)$.\par 
	In conclusion, if $G$ is a 3-graph  satisfying that $\delta_1(G)> \frac{\sqrt{5}-1}{4}n^2+O(n)  $, then for any vertex $u\in G$, there are generalized triangles $T^1$, $T^2$ and $T^3$ covering $u$. \par

	\section{Results on some trees covering}
	\subsection{Star covering}
	\subsubsection{Construction}
		\begin{Constrution}
			Let $V_4$ be an $n$-vertex set with $n\geq 8$ and $E_4$ be a $3$-element set. Let $A$ be a $4$-vertex subset of $V_4$ and $B$ be the remain vertex set $V\setminus A$. Let $E_4=\binom{A}{3}\cup \binom{B}{3}$. Let $G_4=(V_4,E_4)$ be a 3-graph. Actually, we have  $G_4=K_4^3\cup K_{n-4}^3.$
		\end{Constrution}
		\begin{figure}[!htbp]
			\centering
			\begin{tikzpicture}[scale=0.6]
				\draw [line width=1.pt] (0.,0.)node [below=2] {\large $A$} circle (2cm);
				\draw [line width=1.pt] (6.,0.)node [below=2] {\large $B$} circle (3cm);
				
				\draw [line width=1.pt,color=blue] (-1,-1)-- (1,-1);
				\draw [line width=1.pt,color=blue] (-1,-1)-- (0,1);
				\draw [line width=1.pt,color=blue] (1,-1)-- (0,1);
				
				\draw [line width=1.pt,color=blue] (4,-1.)-- (8,-1);
				\draw [line width=1.pt,color=blue] (4,-1)-- (6,1.5);
				\draw [line width=1.pt,color=blue] (8.,-1.)-- (6,1.5);
			\end{tikzpicture}
			\caption{Construction $4$}
		\end{figure}
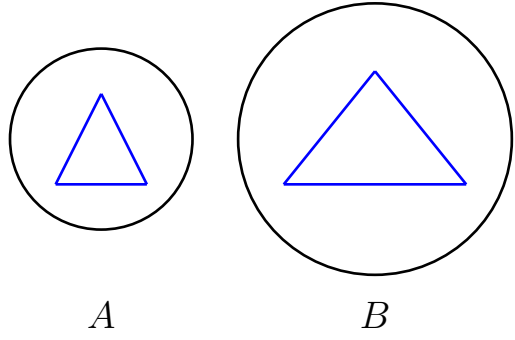
		\begin{ob} \label{k4ob}
			$G_4$ is a $3$-graph with $\delta_1(G_4)=3$ and there is no $P_2$ in $G_4$ covering vertices in $A$.		\end{ob}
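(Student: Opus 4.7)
The proof plan breaks naturally into two independent parts, matching the two conjuncts of the statement.

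For the degree computation, I would simply compute $d_{G_4}(\{v\})$ for each of the two vertex classes using the fact that $G_4 = K_4^3 \sqcup K_{n-4}^3$ is a vertex-disjoint union. A vertex $v \in A$ lies only in the $K_4^3$ component, so $d_{G_4}(\{v\}) = \binom{|A|-1}{2} = \binom{3}{2} = 3$. A vertex $v \in B$ lies only in the $K_{n-4}^3$ component, so $d_{G_4}(\{v\}) = \binom{n-5}{2}$. Since the hypothesis $n \geq 8$ gives $\binom{n-5}{2} \geq \binom{3}{2} = 3$, the minimum degree is realized on $A$ and equals $3$.

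For the non-covering property, the key observation is that every edge of $G_4$ lies either entirely in $A$ or entirely in $B$, and no edge straddles the partition. Suppose for contradiction that some $u \in A$ is covered by a copy of $P_2$ with edges $e_1, e_2$ satisfying $|e_1 \cap e_2| = 1$, and with $u \in e_1 \cup e_2$. Without loss of generality $u \in e_1$, which forces $e_1 \subseteq A$. Now I would split into cases on $e_2$: if $e_2 \subseteq B$, then $e_1 \cap e_2 = \emptyset$, contradicting $|e_1 \cap e_2| = 1$; if instead $e_2 \subseteq A$, then $e_1, e_2$ are two distinct $3$-subsets of the $4$-set $A$, so by inclusion-exclusion $|e_1 \cap e_2| \geq |e_1| + |e_2| - |A| = 3 + 3 - 4 = 2$, again contradicting $|e_1 \cap e_2| = 1$. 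Both cases fail, so no $P_2$ can cover $u$.

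Neither step poses a real obstacle: the degree count is arithmetic, and the non-covering argument is a pigeonhole on the $4$-vertex set $A$. The only point worth stating carefully is the defining property of $P_2$ that its two edges intersect in exactly one vertex, which is precisely what rules out both cases above.
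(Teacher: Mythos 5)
Your proposal is correct and follows essentially the same route as the paper: the degree count is identical, and your case analysis on where $e_2$ lies is just a more explicit rendering of the paper's two-line observation that $A$ and $B$ are disconnected and that $|A|=4$ is too small to contain the five vertices of a $P_2$.
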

			\begin{proof}
				As $|A|=4$, there is no $P_2$ in the induced graph $G[A]$. And $A$ and $B$ are disconnected, so there is no $P_2$ covering the vertices in $A$.\par 
				Now we prove that $\delta_1(G_4)=3$. Let $v$ be a vertex in $V(G_4)$.\par
				If $v\in A$, then $$d_{G_4}(\{v\})= \binom{4-1}{2}=3.$$\par 
				If $v\in B$, as $n\geq 8$ we have $$d_{G_4}(\{v\})=\binom{n-5}{2}\geq 3 .$$\par 
				Therefore, we have $\delta_1(G_4)=3$.
				
			\end{proof}

\begin{Constrution}
	Let $V_5$ be a vertex set. Fix two vertices $u, v_0 \in V_5$, let $V_{5}'=V_5\setminus \{u,v_0\}$ and $E_5=\{u, v_0\}\vee \binom{V_{5}'}{1}\cup \{ v_0\}\vee \binom{V_{5}'}{2}.$ Let $G_5=(V_5,E_5)$ be a $3$-graph. 
\end{Constrution}
\begin{ob}\label{pp22}
	$G_5$ is a $3$-graph with $\delta_2(G_5)= 1$. For the vertex $u$, we can not find $4$ vertices $p,q,s,t$ such that $\{u,p,q\}$ and $\{u,s,t\}$ form a linear $2$-path $P_2$ covering $u$.
\end{ob}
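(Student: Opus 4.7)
The plan is to read off both claims directly from the edge list of $G_5$. Every edge of $G_5$ is of one of two types: type (A) edges of the form $\{u,v_0,x\}$ with $x\in V_5'$, and type (B) edges of the form $\{v_0,x,y\}$ with distinct $x,y\in V_5'$. Everything follows by a short case analysis based on this dichotomy.

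For the codegree claim, I would run through the four possible shapes of a pair $S\in\binom{V_5}{2}$. If $S=\{u,v_0\}$, then every type-(A) edge shows each $x\in V_5'$ is a co-neighbor, so $d_{G_5}(S)=|V_5'|$. If $S=\{u,x\}$ for some $x\in V_5'$, then only a type-(A) edge can contain $S$, and the unique such edge is $\{u,v_0,x\}$, so $d_{G_5}(S)=1$. If $S=\{v_0,x\}$ for some $x\in V_5'$, then the type-(A) edge $\{u,v_0,x\}$ and the type-(B) edges $\{v_0,x,y\}$ for $y\in V_5'\setminus\{x\}$ contribute, giving $d_{G_5}(S)=|V_5'|$. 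Finally, if $S=\{x,y\}\subseteq V_5'$, only a type-(B) edge can contain $S$, and the unique such edge is $\{v_0,x,y\}$, so $d_{G_5}(S)=1$. Taking the minimum over all four cases gives $\delta_2(G_5)=1$.

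For the second claim, the key observation is that type-(B) edges do not contain $u$; hence every edge containing $u$ is of type (A) and in particular also contains $v_0$. Therefore any two edges $\{u,p,q\}$ and $\{u,s,t\}$ through $u$ share at least the pair $\{u,v_0\}$, and so cannot form a linear $2$-path centered at $u$ (which by the definition of $P_2$ would require the two edges to meet in exactly the single vertex $u$). This forbids the existence of $p,q,s,t$ as in the statement. Since the verification of both parts amounts only to listing edges by type, there is no genuine obstacle; the only thing to be careful about is not to miss the case $S=\{v_0,x\}$ when computing codegrees, since that pair has large codegree but does not affect $\delta_2$.
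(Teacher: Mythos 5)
Your proof is correct and follows essentially the same route as the paper: the codegree computation is the same four-way case analysis, and your observation that every edge through $u$ contains $v_0$ is just an unpacking of the paper's remark that the link graph $G_u$ is a star (hence has no $2$-matching). Nothing to add.
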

\begin{proof}
	Considering any two vertices $v_1,v_2\in V(G_5)$, we have:
	\begin{itemize}
		\item If $v_1=u$ and $v_2=v_0$, we have $d_{G_5}(\{u,v_0\})\geq 1$.
		\item If $v_1=u$ and $v_2\in V_5'$, we have $d_{G_5}(\{u,v_2\})=1$.
		\item If $v_1=v_0$ and $v_2\in V_5'$, we have $d_{G_5}(\{u,v_0\})\geq 1$.
		\item If $v_1,v_2\in V_5'$, we have $d_{G_5}(\{u,v_0\})= 1$.
	\end{itemize}
	Hence, we have $\delta_2(G_5)= 1$.
	\par 
	Let $G_u$ be the link graph of $u$. As $G_u$ is a star, we can not find $4$ vertices $p,q,s,t$ where $\{u,p,q\}$ and $\{u,s,t\}$ form a linear $2$-path $P_2$ covering $u$.
\end{proof}
\par
\begin{Constrution}
	Let $V_6$ be a vertex set. Fix three vertices $u,a,b$ in $V_6$, let $V_6'$ be $V_6\setminus \{u,a,b\}$. Let $E_6=\{u,a,b\}\cup \{u,a\}\vee \binom{V_6'}{1}\cup \{u,b\}\vee \binom{V_6'}{1}\cup \binom{V_6\setminus\{u\}}{3}$. Let $G_6=(V_6,E_6)$ be a $3$-graph.
\end{Constrution}
\begin{ob}\label{S3center lower bound}
	$G_6$ is a $3$-graph with $\delta_2(G_6)= 2$ and there is no $S_3$ with the center $u$ covering $u$.
\end{ob}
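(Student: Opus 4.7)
The plan is to verify the two claims of the observation in turn by direct analysis of the construction.

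For the codegree, I would partition the pairs $\{v_1,v_2\}\subset V_6$ according to their intersection with the distinguished set $\{u,a,b\}$. The critical case is $\{u,x\}$ with $x\in V_6'$: the only edges of $G_6$ containing $u$ lie in the three families $\{u,a,b\}$, $\{u,a\}\vee\binom{V_6'}{1}$ and $\{u,b\}\vee\binom{V_6'}{1}$, and among these only $\{u,a,x\}$ and $\{u,b,x\}$ contain the pair $\{u,x\}$, giving codegree exactly $2$. Every other type of pair has codegree at least $n-3$: pairs $\{x,y\}\subset V_6'$ pick up one edge of $\binom{V_6\setminus\{u\}}{3}$ for each third vertex in $V_6\setminus\{u,x,y\}$, and pairs of the form $\{u,a\}$, $\{u,b\}$, $\{a,b\}$, $\{a,x\}$ or $\{b,x\}$ each achieve codegree $n-2$ after combining contributions from the star-type families and the clique $\binom{V_6\setminus\{u\}}{3}$. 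Since $n\geq 7$, the global minimum is attained in the critical case and equals $2$.

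For the second claim, an $S_3$ centred at $u$ in $G_6$ corresponds exactly to a matching of size $3$ in the link graph $G_u$. Unpacking $E_6$, the edges of $G_u$ are precisely $\{a,b\}$ together with $\{a,x\}$ and $\{b,x\}$ for each $x\in V_6'$; the clique $\binom{V_6\setminus\{u\}}{3}$ contributes nothing since it avoids $u$. Hence every edge of $G_u$ meets $\{a,b\}$, so $\{a,b\}$ is a vertex cover of $G_u$ of size $2$, and any matching in $G_u$ has size at most $2$. Thus no three pairwise disjoint $u$-edges exist in $G_6$, ruling out any $S_3$ with centre $u$.

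The whole argument is routine; the only mild nuisance is keeping the codegree case analysis organised and checking that the star-type families and the clique do not double-count edges in the codegrees of pairs such as $\{a,b\}$ or $\{a,x\}$. There is no real obstacle.
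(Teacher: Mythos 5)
Your proof is correct and takes essentially the same approach as the paper's: a case analysis of pair codegrees showing the minimum value $2$ is attained exactly at pairs $\{u,x\}$ with $x\in V_6'$, followed by the observation that the link graph of $u$ is the book graph on $\{a,b\}$ and hence has no $3$-matching. Your vertex-cover phrasing (every edge of $G_u$ meets $\{a,b\}$) is just a clean restatement of the paper's appeal to the book-graph structure.
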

\begin{proof}
	Considering any two vertices $v_1,v_2\in V_6$, we have:
	\begin{itemize}
		\item If $v_1=u$ and $v_2=a$, we have $d_{G_6}(\{u,a\})\geq 2$.
		\item If $v_1=u$ and $v_2=b$, we have $d_{G_6}(\{u,b\})\geq 2$.
		\item If $v_1=u$ and $v_2\in V_6'$, we have $d_{G_6}(\{u,v_2\})=2$.
		\item If $v_1\in V_6\setminus\{u\}$ and $v_2\in V_6\setminus\{u\}$, we have $d_{G_6}(\{v_1,v_2\})\geq 2$.
		
	\end{itemize}
	\par 
	Hence, we have $\delta_2(G_6)= 2$. Now we only need to prove there is no $S_3$ with the center $u$.
	\par
	Let $G_u$ be the link graph of $u$. Then we find $G_u$ is the book graph which has no $3$-matching. Hence there is no $S_3$ with the  center $u$. 
\end{proof}
\subsubsection{The proof of Theorem \ref{s22}}
As $c_2(n,P_2)\geq 0$, we only need to prove $c_2(n,P_2)$ could not be larger than 0. Suppose to the contrary that there is a 3-graph $G$ with $\delta_2(G)\geq 1$ and a vertex $u\in V(G)$ that is not covered by $P_2$.\par
Let $v_1$ be a vertex different from $u$ in $G$. As $d(\{u,v_1\})\geq 1$, there is a vertex $v_2$ making $\{u,v_1,v_2\}$ being an edge in $G$. Consider another vertex $v_3$ in $G$, as $d(\{u,v_3\})\geq 1$ and there is no $P_2$ containing $u$, we have $N_G(\{u,v_3\})\subseteq \{v_1,v_2\}$.\par
If $d(\{u,v_3\})> 1$, we have $N_G(\{u,v_3\})= \{v_1,v_2\}.$ Let $v_4$ be a vertex in $V{(G)}\setminus \{u,v_1,v_2,v_3\}$. As $d(\{v_1,v_4\})\geq 1$ and there is no $P_2$ containing $u$, we have $N_G(\{v_1,v_4\})\\ \subseteq \{u,v_2,v_3\}$. If $u\in N_G(\{v_1,v_4\})$, then we find a $P_2$ with the edge set $\{\{u,v_2,v_3\},\{u,\\v_1,v_4\}\}$ containing $u$, a contradiction.
If $v_2\in N_G(\{v_1,v_4\})$, then we find a $P_2$ with the edge set $\{\{u,v_2,v_3\},\{v_1,v_2,v_4\}\}$ containing $u$, a contradiction.
If $v_3\in N_G(\{v_1,v_4\})$, then we find a $P_2$ with the edge set $\{\{u,v_2,v_3\},\{v_1,v_3,v_4\}\}$ containing $u$, a contradiction. As $N_G(\{v_1,v_4\})\neq\emptyset $, we have a contradiction with $N_G(\{v_1,v_4\})\subseteq \{u,v_2,v_3\}$.
\par
If $d(\{u,v_3\})= 1$, without loss of generality, let $N_G(\{u,v_3\})= \{v_1\}$.  Let $v_5$ be a vertex in $V(G)\setminus \{u,v_1,v_2,v_3\}$. As $d(\{v_3,v_5\})\geq 1$ and there is no $P_2$ containing $u$, we have $N_G(\{v_3,v_5\})\subseteq \{u,v_1,v_2\}$. If $u\in N_G(\{v_3,v_5\})$, then we find a $P_2$ with the edge set $\{\{u,v_3,v_5\},\{u,v_1,v_2\}\}$ containing $u$, a contradiction.
If $v_1\in N_G(\{v_3,v_5\})$, then we find a $P_2$ with the edge set $\{\{u,v_1,v_2\},\{v_1,v_3,v_5\}\}$ containing $u$, a contradiction.
If $v_2\in N_G(\{v_3,v_5\})$, then we find a $P_2$ with the edge set $\{\{u,v_1,v_3\},\{v_2,v_3,v_5\}\}$ containing $u$, a contradiction. As $N_G(\{v_3,v_5\})\neq\emptyset $, we have a contradiction with $N_G(\{v_3,v_5\})\subseteq \{u,v_1,v_2\}$.\par
In conclusion, we have $c_2(n,P_2)=0$.
\subsubsection{The proof of Theorem \ref{s21}}
	The lower bound of $c_2(n,P_2)$ can be directly get from Observation \ref{k4ob}. Therefore, we only need to  prove $c_2(n,T) \leq3$ when $n\geq 8$.
Suppose to the contrary that there is a 3-graph $G$ with $\delta_1(G)\geq 4$ and a vertex $u\in V(G)$ that is not covered by $P_2$. Let $G_u$ be the link graph of $u$.\par
As there is no $P_2$ covering $u$, there is no subgraph $G'$ with the vertex set $\{a,b,c,d\}$ and the edge set $\{\{a,b\},\{c,d\}\}$ in $G_u$. Otherwise, we can find a $P_2$ with the edge set $\{\{a,b,u\},\{c,d,u\}\}$ covering $u$.
Cause $\delta_1(G)\geq 4$, we have there are at least $4$ edges in $G_u$. Therefore, there must be a star S with the vertex set $\{v_1,v_2,v_3\}$ and the edge set $\{\{v_1,v_2\},\{v_2,v_3\}\}$
in $G_u$. We claim that $N_G(\{v_1\})\subseteq \{uv_2,uv_3,v_2v_3\}$. Otherwise, if there are two vertices $x,y$ in $V(G)\setminus\{v_1,v_2,v_3\}$ making $\{xy\}\in N_G(\{v_1\})$, then we find a $P_2$ with the edge set $\{\{u,v_1,v_2\},\{v_1,x,y\}\}$ covering $u$, a contradiction. If there is a vertex $s$ in $V(G)\setminus\{v_1,v_2,v_3\}$ making $\{su\}\in N_G(\{v_1\})$, then we find a $P_2$ with the edge set $\{\{u,v_1,s\},\{u,v_2,v_3\}\}$ covering $u$, a contradiction. If there is a vertex $t$ in $V(G)\setminus\{v_1,v_2,v_3\}$ making $\{tv_2\}\in N_G(\{v_1\})$, then we find a $P_2$ with the edge set $\{\{t,v_1,v_2\},\{u,v_2,v_3\}\}$ covering $u$, a contradiction. If there is a vertex $q$ in $V(G)\setminus\{v_1,v_2,v_3\}$ making $\{qv_3\}\in N_G(\{v_1\})$, then we find a $P_2$ with the edge set $\{\{q,v_1,v_3\},\{u,v_1,v_2\}\}$ covering $u$, a contradiction. Therefore, we have $N_G(\{v_1\})\subseteq \{uv_2,uv_3,v_2v_3\}$. However, that contradicts $\delta_1(G)\geq 4$.\par
 In conclusion, we have  $c_1(n,P_2)= 3$ when $n\geq 8$.

\subsubsection{The proof of Theorem \ref{pp2}}
Suppose to the contrary that there is a $3$-graph $G$ with $\delta_2(G)\geq 2$ and a vertex $u$, such that there is no $4$ vertices $p,q,s,t$ making $\{u,p,q\}$ and $\{u,s,t\}$ forming a linear $2$-path $P_2$ covering $u$. Let $G_u$ be the link graph of $u$.\par 
As $\delta_2(G)\geq 2$, we have $\delta(G_u)\geq 2$. Since there is no $4$ vertices $p,q,s,t$ such that $\{u,p,q\}$ and $\{u,s,t\}$ form a linear $2$-path $P_2$ covering $u$, $G_u$ has no 2-matching. We claim that $G_u$ has only one component. Otherwise, as $\delta(G_u)\geq 2$, there is a 2-matching in $G_u$. As $|V(G_u)|=n-1$ and $n\geq 5 $, we have $G_u$ must be a star. However, the leaves in $G_u$ only have degree $1$, a contradiction with $\delta(G_u)\geq 2$. 
\par
Hence, if $G$ is an $n$-vertex $3$-graph satisfying that $n\geq 5$ and $\delta_2(G)\geq 2$, then for any vertex $u\in V(G)$, we can find $4$ vertices $p,q,s,t$ such that $\{u,p,q\}$ and $\{u,s,t\}$ form a linear $2$-path $P_2$ covering $u$. 
\par 
What's more, by Observation \ref{pp22}, we have the inequality in Theorem \ref{pp2} is sharp.
\subsubsection{The proof of Theorem \ref{s32}}
For a positive integer $t$, the book graph $B_t$ is the graph obtained by the amalgamation of $t$ triangles along the same edge. Let $B_t^-$ be the graph obtained by deleting the common edge from the book graph $B_t$. Before we prove Theorem \ref{s32}, we explore the structure of graphs without some specific matchings and obtain the following result.
\begin{thm}\label{sgBt}
	Let $G$ be an $n$-vertex simple graph with $n\geq 7$. If $G$ has no $3$-matching and  $\delta(G)\geq 2$, then $G$ be the book graph $B_{n-2}$ or the graph $B_{n-2}^-$.
\end{thm}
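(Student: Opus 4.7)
The approach is to exhibit a rigid ``spine and pages'' structure on $G$. First I would observe that $\nu(G) = 2$: since $n \geq 7$ and $\delta(G) \geq 2$, $G$ is neither a star nor a triangle, so $\nu(G) \geq 2$, and by hypothesis $\nu(G) \leq 2$. Pick a maximum matching $M = \{x_1 y_1, x_2 y_2\}$ and set $W = V(G) \setminus V(M)$, so $|W| = n - 4 \geq 3$. By the maximality of $M$, $W$ is an independent set, and every $w \in W$ satisfies $N(w) \subseteq \{x_1, y_1, x_2, y_2\}$ with $|N(w)| \geq 2$.

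The heart of the argument is pinning down the neighbourhoods inside $W$. I would first rule out $N(w_0) = \{x_1, y_1\}$ for any $w_0 \in W$: if some $w_1 \in W \setminus \{w_0\}$ were adjacent to $x_1$ (resp.\ $y_1$), then $\{w_0 y_1, w_1 x_1, x_2 y_2\}$ (resp.\ $\{w_0 x_1, w_1 y_1, x_2 y_2\}$) would be a $3$-matching, so $N(w_1) = \{x_2, y_2\}$; but then any third $w_2 \in W$ (guaranteed by $|W| \geq 3$) would by the same reasoning need $N(w_2) \subseteq \{x_2, y_2\}$ from $w_0$ and $N(w_2) \subseteq \{x_1, y_1\}$ from $w_1$, violating $\delta(G) \geq 2$. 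The symmetric argument rules out $N(w_0) = \{x_2, y_2\}$. Hence each $w \in W$ has at least one neighbour in both $\{x_1, y_1\}$ and $\{x_2, y_2\}$. Next, if $w, w' \in W$ satisfied $w \sim y_1 \not\sim x_1$ and $w' \sim x_1 \not\sim y_1$, then $\{w y_1, w' x_1, x_2 y_2\}$ would be a $3$-matching; so after relabelling I may assume $x_1 \in N(w)$ for every $w \in W$, and analogously $x_2 \in N(w)$. Finally, if some $w_0$ were still adjacent to $y_1$, picking distinct $w_1, w_2 \in W \setminus \{w_0\}$ (using $|W| \geq 3$) yields the $3$-matching $\{w_0 y_1, w_1 x_2, w_2 x_1\}$; the same argument works for $y_2$. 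Therefore $N(w) = \{x_1, x_2\}$ for every $w \in W$.

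It remains to handle $y_1$ and $y_2$. Since $W$ is independent and no $w \in W$ is adjacent to $y_1$ or $y_2$, the only candidate neighbours of $y_1$ besides $x_1$ are $x_2$ and $y_2$. If $y_1 y_2 \in E(G)$, then $\{y_1 y_2, w x_1, w' x_2\}$ for distinct $w, w' \in W$ is a $3$-matching, contradiction; so $y_1 \not\sim y_2$, and $\delta(G) \geq 2$ then forces $y_1 \sim x_2$ and, symmetrically, $y_2 \sim x_1$. Consequently every vertex of $G$ outside $\{x_1, x_2\}$ has neighbourhood exactly $\{x_1, x_2\}$, and the only remaining freedom is whether $x_1 x_2 \in E(G)$; the two possibilities yield $G = B_{n-2}$ and $G = B_{n-2}^-$ respectively. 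The main obstacle is the case analysis in the middle paragraph: each of the possible neighbourhood types for $w \in W$ must be eliminated by producing an explicit $3$-matching, and the hypothesis $n \geq 7$ (equivalently $|W| \geq 3$) enters crucially whenever three distinct auxiliary vertices in $W$ are needed.
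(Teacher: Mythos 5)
Your proof is correct, but it follows a genuinely different route from the paper's. The paper first reduces to the connected case by a component analysis, then argues that $G$ contains a cycle, classifies the longest cycle (ruling out lengths $3$ and $\geq 5$ so that the longest cycle is a $4$-cycle $v_1v_2v_3v_4$), and finally grows the book structure outward from that $4$-cycle, showing every outside vertex has neighbourhood exactly $\{v_1,v_3\}$ and that $v_2v_4\notin E(G)$. You instead start directly from the hypothesis in its matching form: $\nu(G)=2$, so a maximum matching $\{x_1y_1,x_2y_2\}$ leaves an independent set $W$ of size $n-4\geq 3$ whose vertices have all their ($\geq 2$) neighbours on the four matched vertices, and a short case analysis pins every such neighbourhood down to $\{x_1,x_2\}$ and then forces $N(y_1)=N(y_2)=\{x_1,x_2\}$ as well. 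Your argument buys a more economical proof: it needs no separate connectivity step, no existence-of-a-cycle lemma, and no longest-cycle classification, and it makes transparent exactly where $n\geq 7$ (i.e., $|W|\geq 3$) is used, namely to produce the third auxiliary vertex in the elimination of the neighbourhood types $\{x_1,y_1\}$ and $\{x_2,y_2\}$ and of the edges to $y_1,y_2$. The paper's cycle-based approach is longer but identifies the spine $\{v_1,v_3\}$ geometrically as the two opposite vertices of the unique longest cycle; both proofs are elementary and both correctly isolate $B_{n-2}$ and $B_{n-2}^-$ as the only extremal configurations.
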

\begin{proof}[Proof of Theorem \ref{sgBt}]
	Consider the components of $G$.
	\par 
	If $G$ has more than $2$ components, as $\delta(G)\geq 2$, every component has no isolated vertices. Hence we can choose one edge in each component and then we can find at least three vertex-disjoint edges, a contradiction with $G$ has no $3$-matching. 
	\par  
	If $G$ has $2$ components, let them be $G_1,G_2$. As $\delta(G)\geq 2$, every component has no isolated vertices. We can choose one edge in $G_1$. As there is no $3$-matching in $G$, there is no 2-matching in $G_2$. Hence $G_2$ must be a $3$-cycle or a star. If $G_2$ is a star, then the leaves of $G_2$ only have degree $1$, a contradiction with $\delta(G)\geq 2$. If $G_2$ be a $3$-cycle, as $n\geq 7$, $G_1$ has at least $4$ vertices. And $G_1$ must be a star because $G_1$ also can not have $2$-matching. Hence the leaves of $G_1$ only have degree $1$, a contradiction with $\delta(G)\geq 2$. 
	\par 
	Now we only need to consider the case that $G$ is a connected graph. We claim that $G$ must have a cycle. Otherwise, let $P$ be the longest path in $G$ with endpoints $u,v$. As $\delta(G)\geq 2$, $u$ must have at least two neighbors. Since $P$ is the longest path, the neighbors of $u$ must in $V(P)$, which makes a cycle in $G$, a contradiction.
	\par
	Now let $C$ be the longest cycle in $G$. Consider the length of $C$.
	\begin{itemize}
		\item If the length of $C$ is more than $5$, then there exists a $3$-matching in $G$, a contradiction with $G$ has no $3$-matching.
		\item  If the longest cycle $C$ is a $5$-cycle, let the vertex set $V(C)$ be $\{v_1,v_2,v_3,v_4,v_5\}$ and the edge set $E(C)$ be $\{\{v_1v_2\},\{v_2v_3\},\{v_3v_4\},\{v_4v_5\},\{v_5v_1\}\}$. As $G$ is a connected $n$-vertex graph and $n\geq 7$, there is at least one vertex in $V(C)$ sending at least one edge to $V(G)\setminus V(C)$. Without loss of generality, let $v_1$ send one edge to $v_6\in V(G)\setminus V(C).$ Then we find a $3$-matching $\{\{v_1v_6\},\{v_2v_3\},\{v_4v_5\}\}$, a contradiction.
		\item If the longest cycle $C$ is a $3$-cycle, let the vertex set $V(C)$ be $\{v_1,v_2,v_3\}$ and the edge set $E(C)$ be $\{\{v_1v_2\},\{v_2v_3\},\{v_3v_1\}\}$. As $G$ is connected, there must be some vertices in $\{v_1,v_2,v_3\}$ sending edges into $V(G)\setminus \{v_1,v_2,v_3\}$. Without loss of generality, let $v_1$ adjacent to $v_4\in V(G)\setminus\{v_1,v_2,v_3\}$. As $\delta(G)\geq 2$, we have $v_4$ has at least $2$ neighbors. If $v_4$ is adjacent to $v_2$ or $v_3$, then there is a $4$-cycle in $G$, a contradiction with the longest cycle is a $3$-cycle. Hence $v_4$ has a neighbor in $V(G)\setminus \{v_1,v_2,v_3,v_4\}$, let it be $v_5$. Since $d(v_5)\geq 2$, $v_5$ has at least one neighbor other than $v_4$. If $v_5$ has a neighbor in $\{v_2,v_3\}$, then there is a $4$-cycle in $G$, a contradiction with the longest cycle is $3$-cycle. If $v_5$ has a neighbor in $V(G)\setminus \{v_1,v_2,v_3,v_4,v_5\}$, let it be $v_6$, then there is a $3$-matching $\{\{v_2v_3\},\{v_1v_4\},\{v_5v_6\}\}$ in $G$, a contradiction with $G$ has no $3$-matching. If $v_1$ is a neighbor of $v_5$, then there must be some vertices in $\{v_1,v_2,v_3,v_4\}$ sending edges into $V(G)\setminus \{v_1,v_2,v_3,v_4,v_5\}$ since $G$ is connected. If $v_1$ is adjacent to $v_7\in V(G)\setminus\{v_1,v_2,v_3,v_4,v_5\}$, then there is a $3$-matching $\{v_1v_7,v_2v_3, v_4v_5\}$, a contradiction. If there is a vertex in $\{v_2,v_3,v_4\}$ adjacent to $v_8\in V(G)\setminus\{v_1,v_2,v_3,v_4,v_5\}$, let $v_2$ be adjacent to $v_8$, then there is a $3$-matching $\{\{v_2v_8\},\{v_1v_3\},\{v_4v_5\}\}$, a contradiction. 
	\end{itemize} 
	\par 
	Hence, the longest cycle $C$ must be a $4$-cycle. Let the vertex set $V(C)$ be
	$\{v_1,v_2,v_3,v_4\}$ and the edge set $E(C)$ be $\{\{v_1v_2\},\{v_2v_3\},\{v_3v_4\},\{v_4v_1\}\}$.
	\par 
	As $G$ is connected, there must be some vertices in $\{v_1,v_2,v_3,v_4\}$ sending edges into $V(G)\setminus\{v_1,v_2,v_3,v_4\}$. Without loss of generality, let $v_5$ be such a vertex that is adjacent to $v_1$. Since $\delta(G)\geq 2$, $v_5$ has at at least one neighbor other than $v_1$. If $v_5$ has a neighbor in $\{v_2,v_4\}$, then there is a $5$ cycle, a contradiction with the longest cycle is a $4$-cycle. If $v_5$ has a neighbor in $V(G)\setminus \{v_1,v_2,v_3,v_4\}$, let $v_5$ be adjacent to $v_6\in V(G)\setminus \{v_1,v_2,v_3,v_4\}$. Then there is a $3$-matching $\{\{v_1v_2\},\{v_3v_4\},\{v_5v_6\}\}$ in $G$, a contradiction. Hence $N_{G}(\{v_5\})\subseteq \{v_1,v_3\}$. As $d(\{v_5\})\geq 2$, we have $N_{G}(\{v_5\})= \{v_1,v_3\}$. As $G$ is connected, there must be some vertices in $\{v_1,v_2,v_3,v_4,v_5\}$ sending edges into $V(G)\setminus \{v_1,v_2,v_3,v_4,v_5\}$. If there are some vertices in $\{v_2,v_4,v_5\}$ sending edges into $V(G)\setminus \{v_1,v_2,v_3,v_4,v_5\}$, without loss of generality, let the vertex $v_2$ be adjacent to $v_6\in V(G)\setminus \{v_1,v_2,v_3,v_4,v_5\}$. Then we find a $3$-matching $\{\{v_1v_4\},\{v_3v_5\},\{v_2v_6\}\}$ in $G$, a contradiction. Hence  $v_1$ or $v_3$ must send edges into $V(G)\setminus \{v_1,v_2,v_3,v_4,v_5\}$. 
	\par 
	Let $M$ be the vertex set satisfying that every vertex in $M$ is adjacent to $v_1$ or $v_3$. We claim that for every vertex $a\in M$ we have $N_G(\{a\})\subseteq \{v_1,v_3\}$. Otherwise, if $a$ is adjacent to $v_1$ and there is a vertex $b\in V(G)\setminus \{v_1,v_2,v_3,v_4,v_5,a\}$ such that $\{ab\}$ is an edge in $G$, then we will find a $3$-matching $\{\{ab\},\{v_1v_2\},\{v_3v_4\}\}$ in $G$, a contradiction. As $\delta(G)\geq 2$, we have $N_G(\{a\})= \{v_1,v_3\}$.
	Actually, we have $M=V(G)\setminus \{v_1,v_2,v_3,v_4,v_5\}$. Otherwise there is a contradiction with the fact that $G$ is connected. 
	\par 
	Now consider the adjacency of $v_2$ and $v_4$. If $\{v_2v_4\}$ is an edge in $G$, then we will find a $5$-cycle with the edge set $\{\{v_2v_1\},\{v_1v_5\},\{v_5v_3\},\{v_3v_4\},\{v_4v_2\}\}$ in $G$, a contradiction with the fact that the longest cycle is a $4$-cycle in $G$. Hence $v_2$ and $v_4$ are not adjacent. Considering the 
	adjacency of $v_1$ and $v_3$, if $\{v_1v_3\}$ is an edge in $G$, then $G$ is the book graph $B_{n-2}$.  If $\{v_1v_3\}$ is not an edge in $G$, then $G$ is the graph $B_{n-2}^-$.
	\par
	In conclusion, if $G$ has no $3$-matching and  $\delta(G)\geq 2$, then $G$ is the book graph $B_{n-2}$ or the graph $B_{n-2}^-$.
\end{proof}
Now we prove Theorem \ref{s32}.
\begin{proof}[Proof of Theorem \ref{s32}]
	Suppose to the contrary that there is an $n$-vertex $3$-graph $H$ with $\delta_2(H)\geq 2$ and a vertex $u$ that is not covered by a $S_3$. Let $G_u$ be the link graph of $u$. As $\delta_2(H)\geq 2$ and there is no $S_3$ covering $u$, then $\delta(G_u)\geq 2$ and there is no $3$-matching in $G_u$. By Theorem \ref{sgBt}, we have $G_u$ is the book graph $B_{n-3}$ or the graph $B_{n-3}^-$. Let $A$ be the set of vertices with degree $2$ in $V(B_{n-3}^-)$. And let $b_1$ and $b_2$ be the remained two vertices in $V(B_{n-3}^-)\setminus A$.
\par 
As $\delta_2(H)\geq 2$, we have $d_{H}(\{b_1,b_2\})\geq 2$ and $\{b_1,b_2\}$ has at least two co-neighbors. Even if $\{b_1b_2u\}$ is an edge in $H$, there still must be at least one co-neighbor of $\{b_1b_2\}$ in $A$. Without loss of generality, let $a_0\in A$ and $\{b_1b_2a_0\}$ be an edge in $H$. Consider the vertex set $A\setminus \{a_0\}$. Let $a_1,a_2$ and $a_3$ be three different vertices in $A\setminus \{a_0\}$. If $\{a_1a_2\}$ has a co-neighbor $b_1$ or $b_2$, then we will find a $S_3$ with the edge set $\{\{b_1b_2a_0\},\{b_1a_1a_2\},\{b_1ua_3\}\}$ or $\{\{b_1b_2a_0\},\{b_2a_1a_2\},\{b_2ua_3\}\}$ covering $u$, a contradiction. Hence $N_{H}(\{a_1,a_2\})\subseteq A$. Consider the induced graph $H[A\setminus \{a_0\}]$, as $\delta_2(H)\geq 2$ we have $\delta_2(H[A\setminus \{a_0\}])\geq 1$. By Theorem \ref{s22}, $H[A\setminus \{a_0\}]$ has a $P_2$ covering. Then there must be a $P_2$ in $H[A\setminus \{a_0\}]$. Without loss of generality, let $P_2$ with the vertex set $\{a_1,a_2,a_3,a_3,a_5\}$ and the edge set $\{\{a_1a_2a_3\},\{a_1a_4a_5\}\}$ be a linear $2$-path in $H[A\setminus \{a_0\}]$. As a result, we find a $S_3$ with the edge set $\{\{a_1a_2a_3\},\{a_1a_4a_5\},\{a_1b_1u\}\}$ covering $u$, a contradiction.
\par
In conclusion, if $H$ is an $n$-vertex 3-graph  satisfying that $n\geq 7$ and $\delta_2(H)\geq 2 $, then for any vertex $u\in V(H)$ there is a $3$-star $S_3$ covering $u$.
\end{proof}

\subsubsection{The proof of Theorem \ref{S3 center cover}}
In the proof of Theorem \ref{S3 center cover}, we need a useful theorem in Graph Theory as follows.

\begin{thm}\label{girth}
	Let $G$ be a simple graph and $\delta$ be the minimum degree of $G$. Then $G$ contains a cycle of length at least $\delta+1$.
\end{thm}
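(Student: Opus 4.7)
The plan is to use the classical longest-path argument, which is the standard textbook proof of this result. The idea is that a longest path is forced to close up into a long cycle because its endpoints cannot escape it.

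First, I would pick a longest path $P = v_0 v_1 v_2 \cdots v_k$ in $G$ (in some connected component containing an edge; note that $\delta \geq 2$ in all applications in this paper, so such a path exists and has length at least one). Second, I would observe that because $P$ cannot be extended at the endpoint $v_0$, every neighbor of $v_0$ in $G$ must already be a vertex of $P$; otherwise, prepending such a neighbor would produce a strictly longer path, contradicting maximality. Hence $N_G(v_0) \subseteq \{v_1, v_2, \ldots, v_k\}$, and in particular $|N_G(v_0)| \geq \delta$ neighbors of $v_0$ lie on the path.

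Third, I would let $j$ be the largest index with $v_j \in N_G(v_0)$. Since all $\delta$ neighbors of $v_0$ lie inside $\{v_1, \ldots, v_j\}$, we have $j \geq \delta$. Finally, concatenating the path segment $v_0 v_1 \cdots v_j$ with the edge $v_j v_0$ produces a cycle
\[
C \;=\; v_0 \, v_1 \, v_2 \, \cdots \, v_j \, v_0
\]
of length $j + 1 \geq \delta + 1$, which is exactly what the theorem asserts.

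There is no real obstacle in this argument; the only subtlety is the implicit assumption that a nontrivial path exists, which is guaranteed whenever $\delta \geq 1$. In the applications of Theorem \ref{girth} in this paper the hypothesis $\delta \geq 2$ is always present, so the resulting cycle is genuine (length at least $3$) and the statement is used in its substantive form.
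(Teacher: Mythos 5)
Your proof is correct: it is the standard longest-path argument (all neighbours of an endpoint of a maximum path lie on the path, so the farthest such neighbour closes a cycle of length at least $\delta+1$). The paper itself offers no proof of this statement --- it is quoted as a known classical fact immediately before the proof of Theorem \ref{S3 center cover} --- so there is nothing to compare against; your write-up simply supplies the missing standard argument. Your remark about the degenerate cases is also well taken: as literally stated the theorem fails for $\delta\le 1$ (e.g.\ $K_2$ has minimum degree $1$ and no cycle), but in every application in the paper one has $\delta\ge 3$, so the result is used only where your argument is fully valid.
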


\begin{proof}[Proof of Theorem \ref{S3 center cover}]
Suppose to the contrary that there is an $n$-vertex $3$-graph $H$ with $\delta_2(H)\geq 3$ and a vertex $u\in V(H)$ such that there is no $S_3$ with center $u$ covering it. Let $H_u$ be the link graph of $u$. Then we have $\delta (H_u)\geq 3$ and there is no $3$-matching in $H_u$.
\par 
We claim that $H_u$ must be a connected graph. Otherwise, if $H_u$ has more than two components, then as $\delta (H_u)\geq 3$ we have there is no isolated vertices. Then selecting one edge in every component  generates a $3$-matching in $H_u$, a contradiction. If $G_u$ has two components, then we claim there must be at least one component containing a $2$-matching. Otherwise, as $\delta (H_u)\geq 3$ we have the two components can not be $3$-cycles. As the two components have no $2$-matching, the two components must be two stars, a contradiction with $\delta (H_u)\geq 3$. Hence selecting a $2$-matching in such a component and an edge in another component will generate a $3$-matching in $H_u$, a contradiction. Therefore, $H_u$ must be a connected graph.   
\par
What's more, we have the following claim.
\par
\begin{claim}
	The longest cycle in $H_u$ is a $4$-cycle.
\end{claim}
\begin{proof}
	As $\delta (H_u)\geq 3$, there is a $4$-cycle in $H_u$ by Theorem \ref{girth}. We only need to prove there is no cycle with length more than $4$ in $H_u$.\par 
	 Firstly, there is no cycle with length more than $5$. Otherwise, such cycle will generate a $3$-matching in $H_u$, a contradiction. Secondly, there is no $5$-cycle in $H_u$. Suppose to the contrary that there is a $5$-cycle in $H_u$ with the vertex set $\{v_1,v_2,v_3,v_4,v_5\}$ and the edge set $\{\{v_1v_2\},\{v_2v_3\},\{v_3v_4\},\{v_4v_5\},\{v_5v_1\}\}$. As $H_u$ is connected, then there must be a vertex in $\{v_1,v_2,v_3,v_4,v_5\}$ send one edge to $V(H_u)\setminus \{v_1,v_2,v_3,v_4,v_5\}$. Without loss of generality, let $v_1$ be adjacent to $v_6\in V(H_u)\setminus \{v_1,v_2,v_3,v_4,v_5\}$. As a result, there is a $3$-matching with the edge set $\{\{v_1v_6\},\{v_2v_3\},\{v_4v_5\}\}$ in $H_u$, a contradiction. \par 
	 Therefore, the longest cycle in $H_u$ is a $4$-cycle.
	
\end{proof}
Let $C_4$ be a $4$-cycle in $H_u$ with the vertex set $\{v_1,v_2,v_3,v_4\}$ and the edge set $\{\{v_1v_2\},\{v_2v_3\},\{v_3v_4\},\{v_4v_1\}\}$.  As $H_u$ is connected, then there must be a vertex in $\{v_1,v_2,v_3,v_4\}$ sending one edge to $V(H_u)\setminus \{v_1,v_2,v_3,v_4\}$. Without loss of generality, let $v_1$ be adjacent to $v_5\in V(H_u)\setminus \{v_1,v_2,v_3,v_4\}$. We find $v_5$ can not have a neighbor in $V(H_u)\setminus \{v_1,v_2,v_3,v_4\}$. Otherwise, let $v_6$ be a neighbor in $V(H_u)\setminus \{v_1,v_2,v_3,v_4\}$. Then we will find a $3$-matching with the edge set $\{\{v_1v_2\},\{v_3v_4\},\{v_5v_6\}\}$ in $H_u$, a contradiction. As $\delta (H_u)\geq 3$, we have $N_{H_u}(v_5)\subset \{v_1,v_2,v_3,v_4\}$.
\par 
If $\delta (H_u)= 4$, then we have $N_{H_u}(v_5)= \{v_1,v_2,v_3,v_4\}$. As $H_u$ is connected, there must be a vertex in $\{v_1,v_2,v_3,v_4,v_5\}$ sending edges to $V(H_u)\setminus \{v_1,v_2,v_3,v_4,v_5\}$. Without loss of generality, let $v_1$ be adjacent to $v_6\in V(H_u)\setminus \{v_1,v_2,v_3,v_4,v_5\}$. Then we find there is a $3$-matching with the edge set $\{\{v_1v_6\},\{v_2v_3\},\{v_4v_5\}\}$ in $H_u$, a contradiction.
\par 
If $\delta (H_u)= 3$, then we have $v_5$ has two neighbors in $\{v_2,v_3,v_4\}$. We first consider $N_{H_u}(v_5)= \{v_1,v_2,v_3\}$. As $H_u$ is connected, there must be a vertex in $\{v_1,v_2,v_3,v_4\}$ sending edges to $V(H_u)\setminus \{v_1,v_2,v_3,v_4,v_5\}$. If $v_1$ is adjacent to $v_6\in V(H_u)\setminus \{v_1,v_2,v_3,v_4,v_5\}$, then we will find a $3$-matching with the edge set $\{\{v_1v_6\},\{v_2v_5\},\{v_3\\v_4\}\}$ in $H_u$, a contradiction. If $v_2$ is adjacent to $v_7\in V(H_u)\setminus \{v_1,v_2,v_3,v_4,v_5\}$, then we will find a $3$-matching with the edge set $\{\{v_1v_5\},\{v_2v_7\},\{v_3v_4\}\}$ in $H_u$, a contradiction. If $v_3$ is adjacent to $v_8\in V(H_u)\setminus \{v_1,v_2,v_3,v_4,v_5\}$, then we will find a $3$-matching with the edge set $\{\{v_1v_4\},\{v_2v_5\},\{v_3v_8\}\}$ in $H_u$, a contradiction. If $v_4$ is adjacent to $v_9\in V(H_u)\setminus \{v_1,v_2,v_3,v_4,v_5\}$, then we will find a $3$-matching with the edge set $\{\{v_1v_5\},\{v_2v_3\},\{v_4v_9\}\}$ in $H_u$, a contradiction. Next we consider the case $N_{H_u}(v_5)= \{v_1,v_2,v_4\}$ and $N_{H_u}(v_5)= \{v_1,v_3,v_4\}$. By symmetry, we only need to consider $N_{H_u}(v_5)= \{v_1,v_2,v_4\}$. As $H_u$ is connected, there must be a vertex in $\{v_1,v_2,v_3,v_4\}$ sending one edge to $V(H_u)\setminus \{v_1,v_2,v_3,v_4,v_5\}$. If $v_1$ is adjacent to $v_6\in V(H_u)\setminus \{v_1,v_2,v_3,v_4,v_5\}$, then we will find a $3$-matching with the edge set $\{\{v_1v_6\},\{v_2v_5\},\{v_3v_4\}\}$ in $H_u$, a contradiction. If $v_2$ is adjacent to $v_7\in V(H_u)\setminus \{v_1,v_2,v_3,v_4,v_5\}$, then we will find a $3$-matching with the edge set $\{\{v_1v_5\},\{v_2v_7\},\{v_3v_4\}\}$ in $H_u$, a contradiction. If $v_3$ is adjacent to $v_8\in V(H_u)\setminus \{v_1,v_2,v_3,v_4,v_5\}$, then we will find a $3$-matching with the edge set $\{\{v_1v_4\},\{v_2v_5\},\{v_3\\v_8\}\}$ in $H_u$, a contradiction. If $v_4$ is adjacent to $v_9\in V(H_u)\setminus \{v_1,v_2,v_3,v_4,v_5\}$, then we will find a $3$-matching with the edge set $\{\{v_1v_5\},\{v_2v_3\},\{v_4v_9\}\}$ in $H_u$, a contradiction.
\par 
In conclusion, if $H$ is an $n$-vertex $3$-graph with $n\geq 7$ and $\delta_2(H)\geq 3$, then for any vertex $u\in V(H)$ we can find a $S_3$ with the center $u$.  

What's more, by observation \ref{S3center lower bound}, we have the bound in Theorem \ref{S3 center cover} is sharp.

\end{proof} 
\subsubsection{The proof of Proposition \ref{sk2}}

First we introduce a result about $k$-matching in extremal graph theory due to Erd\H{o}s and Gallai \cite{PTP} as follows.
\begin{thm}\cite{PTP} \label{kkkk}
	Let $G$ be an $n$-vertex graph. If $G$ has no $k$-matching, then $e(G)\leq \max \{\binom{2k-1}{2},\binom{n}{2}-\binom{n-k+1}{2}\}.$ 
\end{thm}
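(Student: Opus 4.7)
The plan is to prove Theorem \ref{kkkk} by induction on $k$, following the classical Erd\H{o}s--Gallai strategy. First I would verify that the stated bound is tight by exhibiting two extremal constructions: the disjoint union of $K_{2k-1}$ with $n-2k+1$ isolated vertices, which has matching number exactly $k-1$ and $\binom{2k-1}{2}$ edges; and the graph on $n$ vertices consisting of all edges having at least one endpoint in a fixed set $S$ of $k-1$ vertices, whose matching number is at most $k-1$ since every edge meets $S$, and whose edge count equals $\binom{n}{2}-\binom{n-k+1}{2}$. Depending on how $n$ compares with $k$, one of these has more edges and provides the matching lower bound.

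For the upper bound, I induct on $k$. The base case $k=1$ forces $\nu(G)=0$, hence $e(G)=0$, matching the convention $\binom{1}{2}=0$. In the inductive step, fix a maximum matching $M$ in $G$. If $|M|\le k-2$, applying the hypothesis with $k-1$ in place of $k$ gives an edge bound no larger than the stated one (since $\binom{2k-3}{2}\le\binom{2k-1}{2}$ and $\binom{n}{2}-\binom{n-k+2}{2}\le\binom{n}{2}-\binom{n-k+1}{2}$), so I may assume $|M|=k-1$ and write $V(M)=\{x_1,y_1,\ldots,x_{k-1},y_{k-1}\}$, $T=V(G)\setminus V(M)$. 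The set $T$ must be independent, else any edge in $T$ together with $M$ would form a $k$-matching.

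The core of the argument is a swap analysis. For any distinct $u,v\in T$ and any $e_i=\{x_i,y_i\}\in M$, maximality forbids having both $ux_i$ and $vy_i$ in $E(G)$, and likewise both $uy_i$ and $vx_i$ in $E(G)$: either would let us replace $e_i$ in $M$ by the two new disjoint edges to obtain a $k$-matching. Hence for each $i$, either every $T$-neighbor of $\{x_i,y_i\}$ is adjacent to one fixed endpoint only, or $|N_T(\{x_i,y_i\})|\le 1$. A parallel restriction applies to edges joining two different $M$-pairs $e_i,e_j$, since two suitably placed ``crossing'' edges would again enable an augmentation of $M$ into a $k$-matching.

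Finally, I would combine these constraints to bound $e(G)$. In the regime where many $M$-edges have fully one-sided $T$-neighborhoods, all their ``light'' endpoints can be absorbed into a single vertex cover of size $k-1$, placing $G$ inside the second extremal graph and yielding $e(G)\le\binom{n}{2}-\binom{n-k+1}{2}$. In the complementary regime, the non-isolated vertices are essentially confined to a set of size at most $2k-1$, giving $e(G)\le\binom{2k-1}{2}$. The main obstacle is the bookkeeping in this last step: one must carefully delineate which $M$-pairs fall into which regime and verify that no intermediate hybrid configuration exceeds the stated maximum. This balancing is exactly what makes the Erd\H{o}s--Gallai argument require more than the naive counting available from $M$-maximality alone.
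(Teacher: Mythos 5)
The paper does not prove this statement at all: it is quoted verbatim from Erd\H{o}s and Gallai \cite{PTP} and used as a black box, so there is no in-paper argument to compare yours against. Judged on its own, your outline follows the classical line (extremal examples, reduction to $\nu(G)=k-1$ via induction on $k$, independence of $T=V(G)\setminus V(M)$, and swap constraints from the maximality of $M$), and the first three stages are correct. In particular the monotonicity of the bound in $k$ and the observation that if two distinct vertices of $T$ each see $e_i=\{x_i,y_i\}$ then all $T$--$e_i$ edges must land on a single endpoint are both right.

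There are, however, two genuine gaps. First, your ``parallel restriction'' on edges joining two different $M$-pairs is not correct as stated: by Berge's theorem, $M$ fails to be maximum only if there is an $M$-augmenting path, and such a path must begin and end at vertices unmatched by $M$, i.e.\ in $T$. Two crossing edges between $V(e_i)$ and $V(e_j)$ alone never augment $M$ (they cover only the same four matched vertices), so maximality places \emph{no} direct restriction on the graph induced on $V(M)$; the forbidden configurations necessarily couple internal edges of $V(M)$ with $T$-attachments at both ends, and this coupling is what you must formalize. Second, the final step --- partitioning the $M$-edges into those whose $T$-neighbourhood is concentrated on one endpoint versus the rest, and showing that the total count of (edges inside $V(M)$) plus (edges between $V(M)$ and $T$) never exceeds $\max\{\binom{2k-1}{2},\binom{n}{2}-\binom{n-k+1}{2}\}$ --- is exactly the heart of the theorem, and you explicitly leave it as ``bookkeeping.'' Hybrid configurations (some matching edges acting as cover vertices, others as part of a clique-like core) are precisely where a naive count can overshoot one of the two extremal values individually, so without carrying out this optimization the bound is not established. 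A cleaner route, if you want a complete proof, is to derive the bound from the Berge--Tutte formula and a convexity argument maximizing $\binom{|S|}{2}+|S|(n-|S|)+\sum_i\binom{n_i}{2}$ over the admissible component structures.
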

Let $k$ be an integer with $k\geq 3$. Then we prove Proposition \ref{sk2}.
\begin{proof}[Proof of $(i)$ in Proposition \ref{sk2}]
Let $H$ be an $n$-vertex 3-graph  satisfying that $n\geq 2k+1$ and $\delta_2(H)> \max \{\frac{4k^2-6k+2}{n-1},k-2-\frac{k^2-nk}{n-1}\} $. 
Let $u$ be a vertex in $V(H)$ and $G_u$ be the link graph of $u$. As $\delta_2(H)> \max\{\frac{4k^2-6k+2}{n-1},k-2-\frac{k^2-nk}{n-1}\}$, we have $\delta(G_u)> \max\{\frac{4k^2-6k+2}{n-1},k-2-\frac{k^2-nk}{n-1}\}$. By handshaking theorem, we have:
\begin{align*}
	e(G_u)&=\frac{1}{2}\sum\limits_{x\in V(G_u)} d_{G_u}(\{x\})\\
	&\geq \frac{1}{2} (n-1)\cdot \delta(G_u)\\
	&> \max \{\binom{2k-1}{2},\binom{n}{2}-\binom{n-k+1}{2}\}
	\end{align*}
 \par 
	Then by Theorem \ref{kkkk}, we have there is a $k$-matching in $G_u$, which means there is a $k$-star $S_k$ with the center $u$ in $H$. 
	 \par
	Hence, if $H$ is an $n$-vertex 3-graph  satisfying that $n\geq 2k+1$ and $\delta_2(H)> \max\{\frac{4k^2-6k+2}{n-1},k-2-\frac{k^2-nk}{n-1}\} $, then for any vertex $u\in V(H)$ there is a $3$-star $S_3$ covering $u$, where the center of $S_3$ is $u$. 
   As a direct corollary, we have  $c_2(n,S_k)\leq  \max \{\frac{4k^2-6k+2}{n-1},k-2-\frac{k^2-nk}{n-1}\}$, which ends our proof. 
   
\end{proof}	

\begin{proof}[Proof of $(ii)$ in Proposition \ref{sk2} ]
Let $H$ be an $n$-vertex 3-graph  satisfying that $n\geq 2k+1$ and $\delta_1(H)> \max \{\binom{2k-1}{2},\binom{n-1}{2}-\binom{n-k}{2}\}$. 
Let $u$ be a vertex in $V(H)$ and $G_u$ be the link graph of $u$. As $\delta_1(H)> \max \{\binom{2k-1}{2},\binom{n-1}{2}-\binom{n-k}{2}\}$, we have $e(H_u)> \max \{\binom{2k-1}{2},\binom{n-1}{2}-\binom{n-k}{2}\}$. By Theorem \ref{kkkk}, we have there is a $k$-matching in $H_u$, which means there is a $S_k$ covering $u$.
As a direct corollary, we have   $c_1(n,S_k)\leq \max \{\binom{2k-1}{2},\binom{n-1}{2}-\binom{n-k}{2}\}$, which ends our proof. 
\end{proof}

	\subsection{Path covering}
	\subsubsection{Construction}
	\begin{Constrution}
		Let $V_7$ be a vertex set. Fix $u\in V_7$, let $V'=V_7\setminus\{u\}$ and $E_7=\{u\}\vee \binom{V'}{2}$. Let $G_7=(V_7,E_7)$ be a 3-graph. 
	\end{Constrution}
	\begin{ob}\label{p2lower bound}
		$G_7$ is a $3$-graph with $\delta_2(G_7)=1$ and $\delta_1(G_7)=n-2$. There is no $P_3$ covering $u$.
	\end{ob}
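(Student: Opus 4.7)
The plan is to verify each of the three assertions separately, all driven by the single structural feature that every edge of $G_7$ contains the fixed vertex $u$.

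For the codegree, I would split a pair $\{x,y\}\subseteq V_7$ according to whether $u\in\{x,y\}$. If $x=u$ and $y\in V'$, the edges through $\{u,y\}$ are precisely $\{u,y,z\}$ for $z\in V'\setminus\{y\}$, so the codegree is $n-2$. If $\{x,y\}\subseteq V'$, then the unique edge of $E_7$ containing both $x$ and $y$ is $\{u,x,y\}$, giving codegree exactly $1$. Hence $\delta_2(G_7)=1$, the minimum being attained on any pair inside $V'$.

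For the minimum degree I would count directly: $d_{G_7}(u)=\binom{n-1}{2}$, while for any $v\in V'$ every edge through $v$ has the form $\{u,v,w\}$ with $w\in V'\setminus\{v\}$, so $d_{G_7}(v)=n-2$. Thus $\delta_1(G_7)=n-2$.

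The only point worth emphasizing is the non-existence of a $P_3$ covering $u$. Since every edge of $G_7$ passes through $u$, any two edges of $G_7$ share the vertex $u$, so $G_7$ contains no pair of vertex-disjoint edges. But in the linear $3$-path with edge sequence $\{v_1v_2v_3\},\{v_3v_4v_5\},\{v_5v_6v_7\}$ the extreme edges $\{v_1v_2v_3\}$ and $\{v_5v_6v_7\}$ are vertex-disjoint. Therefore no three edges of $G_7$ can form a $P_3$ at all, and in particular no copy of $P_3$ covers $u$. The only obstacle in the whole argument is spotting this disjointness feature of the extreme edges of $P_3$; once observed, the $P_3$-freeness of $G_7$ is immediate from its star-like definition.
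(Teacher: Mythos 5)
Your proof is correct and follows essentially the same route as the paper: direct case analysis for the codegree and degree computations, and the observation that all edges of $G_7$ meet in $u$ to rule out a $P_3$. You are in fact slightly more explicit than the paper in pinpointing \emph{why} a common vertex in all edges forbids a $P_3$ (the two extreme edges of a linear $3$-path are vertex-disjoint), which is a welcome clarification but not a different argument.
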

	\begin{proof}
		Considering any two vertices $v_1,v_2\in V_7$, we have:
		\begin{itemize}
			\item If $v_1=u$ and $v_2\in V'$, then $d_{G_7}(\{u,v_2\})\geq 1$.
			\item If $v_1\in V'$ and $v_2\in V'$, then $d_{G_7}(\{v_1,v_2\})= 1$.
			
		\end{itemize}
		\par 
		Considering any vertex $v_0\in V_7$, we have:
		\begin{itemize}
			\item If $v_0=u$, then $d_{G_7}(\{u\})= \binom{n-1}{2}$.
			\item If $v_0\in V'$ , then $d_{G_7}(\{v_0\})= n-2$.
			
		\end{itemize}
		Hence  $\delta_2(G_7)=1$ and $\delta_1(G_7)=n-2$. Meanwhile, as all edges in $G_7$ intersect in $u$, it follows that there is no linear $3$-path $P_3$ covering $u$.
		
	\end{proof}
		\begin{Constrution}
		For $k\geq 4$, let $V_8$ be a vertex set with size more than $2k+1$. Let $A$ be a $(k-2)$-subset of $V_8$ and $B$ be the remain vertex set. Let $G_8=(A,B)$ be the complete bipartite $3$-graph.  
	\end{Constrution}
\begin{ob} \label{lower pk2}
	For $k\geq 4$, we have $\delta_2(G_8)=k-3$ and $G_8$ has no $P_k$ covering.
\end{ob}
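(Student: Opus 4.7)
The observation asserts two things: the codegree value $\delta_2(G_8)=k-3$, and the non-existence of a $P_k$-covering in $G_8$. I will address each separately, both relying on the bipartite structure of $G_8=(A,B)$ with $|A|=k-2$ and $|B|=|V_8|-(k-2)\geq k+3$.

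For the codegree computation, I plan to do a direct three-case analysis on an arbitrary pair $\{u,v\}\subseteq V_8$, according to its distribution across the bipartition: both in $A$, both in $B$, or one in each. In each case I count the number of third vertices $w$ such that $\{u,v,w\}$ is an edge of the complete bipartite $3$-graph. The pair of type ``one in $A$, one in $B$'' will be the bottleneck, since its co-neighbors are forced to lie in $A\setminus\{u\}$, contributing exactly $|A|-1=k-3$ co-neighbors; the other two types yield strictly larger values under $|B|\geq k+3$. Taking the minimum across the three cases gives $\delta_2(G_8)=k-3$.

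For the non-existence of a $P_k$-covering, I plan to exhibit a vertex $u^{*}$ that cannot lie in any embedded copy of $P_k$. The argument is a counting argument based on the rigid combinatorial structure of the linear $k$-path: it has $k$ edges on $2k+1$ vertices, with consecutive edges sharing exactly one vertex, so the internal odd-indexed vertices $v_3,v_5,\dots,v_{2k-1}$ each lie in two edges of the path while every other vertex lies in only one. Each edge of a hypothetical $P_k\subseteq G_8$ is forced by the bipartite structure to contain a specific number of vertices of $A$; summing these contributions across the $k$ edges and accounting for the double-counting at the shared positions gives a lower bound on the number of distinct $A$-vertices used by the path. Comparing this lower bound with the available $|A|=k-2$ will produce a contradiction. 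In fact I expect the argument to show the stronger statement that no $P_k$ embeds in $G_8$ at all, from which no vertex can be covered.

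The main obstacle is keeping the bookkeeping uniform over the possible positions $u^{*}$ could occupy inside a hypothetical $P_k$ (endpoint, shared odd-inner vertex, or even-inner vertex), since the number of edges containing $u^{*}$ is either $1$ or $2$ and this affects the translation from $A$-contribution to distinct $A$-vertex count. The cleanest route is to bound $|V(P_k)\cap A|$ from below by the edge-sum argument, independent of the choice of $u^{*}$, and then conclude that the inequality $|V(P_k)\cap A|\leq |A|=k-2$ fails.
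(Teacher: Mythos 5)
Your proposal has a genuine problem in each half. For the codegree computation, you have identified the wrong bottleneck case. The edge set that the paper's own calculation uses for $G_8$ is ``all triples meeting both $A$ and $B$'': a pair $\{u,v\}$ with $u\in A$, $v\in B$ already meets both sides, so \emph{every} third vertex completes an edge and its codegree is $n-2$, not $|A|-1$. The minimum is attained by pairs lying inside $B$, whose co-neighbourhood is exactly $A$; note this gives $|A|=k-2$, so even the value $k-3$ asserted in the observation appears to be an off-by-one (the paper's proof lists the three cases as $n-2$, $n-k+2$ and $k-3$, the last of which should be $|A|=k-2$). Your analysis would only produce $k-3$ for an edge set in which mixed pairs must be completed inside $A$, and under that edge set pairs inside $B$ would have codegree $0$.

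The second half is a more serious failure. Each edge of an embedded $P_k$ must contain at least one vertex of $A$, and each vertex of the path lies in at most two edges, so your double count yields only $|V(P_k)\cap A|\geq k/2$, which is perfectly compatible with $|A|=k-2$ for every $k\geq 4$: no contradiction appears. And none can, because $P_k$ \emph{does} embed in $G_8$. Place the $k-2$ vertices of $A$ on the connecting vertices $v_3,v_5,\dots,v_{2k-1}$ with one interior connecting vertex $v_{2i+1}$ ($2\leq i\leq k-2$) omitted, and the remaining $k+3$ vertices of the path in $B$; every edge of the path then contains a vertex of each side (the middle vertex $v_{2j}$ of each edge is always in $B$, and each edge retains a non-omitted connecting vertex in $A$), hence is an edge of the complete bipartite $3$-graph. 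This single copy covers all of $A$, and any vertex of $B$ may be taken as $v_1$, so the stronger statement you hoped for is false and the covering claim of the observation fails as stated. The paper's own argument -- taking a longest path, writing $P_l=(A',B')$, and comparing $|A|$ with $|A'|$ -- tacitly assumes that the bipartition of $P_k$ compatible with $G_8$ is unique, which it is not. To make this construction work one would need $|A|\leq\lfloor (k-1)/2\rfloor$, so that $k$ edges cannot all be hit by vertices each lying in at most two of them, but that yields a weaker lower bound on $c_2(n,P_k)$ than the one claimed.
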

\begin{proof}
	Considering any two vertices $v_1,v_2\in V_8$, we have:
	\begin{itemize}
		\item If $v_1\in A$ and $v_2\in B$, then $d_{G_8}(\{v_1,v_2\})= n-2$. As $n\geq 2k+1$, we have $d_{G_8}(\{v_1,v_2\})\geq  2k-1$.
		\item If $v_1$ and $v_2$  are vertices in $
	    A$, then $d_{G_8}(\{v_1,v_2\})= n-k+2$. As $n\geq 2k+1$, we have $d_{G_8}(\{v_1,v_2\})\geq  k+3$.
		\item If $v_1$ and $v_2$  are vertices in $
		B$, then $d_{G_8}(\{v_1,v_2\})= k-3$. 
	\end{itemize}
	\par 
	Hence we have $\delta_2(G_8)=k-3$. Next we prove that $G_8$ has no $P_k$-covering.
	\par 
	Let $P_l$ the longest linear path in $G_8$. If $l=k$, then let the vertex set of $P_l$ be $\{v_0,v_1,v_2,...,v_{2l-1},v_{2l}\}$ and the edge set of $P_l$ be $\{\{v_0v_1v_2\},\{v_2v_3v_4\},...,\{v_{2l-2}v_{2l-1}v_{2l}\\\}\}$. We denote the vertex set $\{v_2,v_4,...,v_{2l-2}\}$ by $A'$ and the vertex set $\{v_0,v_1,...,v_{2l-1},\\v_{2l}\}$ by $B'$. Then $P_l=(A',B')$ is a bipartite $3$-graph. As $l=k$, we have $|A|<|A'|$ and $|A|<|B'|$. Therefore, $P_l$ cannot be a subgraph of $G_8$, a contradiction with the hypothesis. Hence $l<k$ and $G_8$ has no $P_k$ covering.
\end{proof}
		\begin{Constrution}
		Let $k$ be an integer with $k\geq 3$. Let $V_9$ be an $n$-vertex set with $n\geq 4k$ and $E_9$ be a $3$-element set. Let $A$ be a $2k$-vertex subset of $V_9$ and $B$ be the remain vertex set $V\setminus A$. Let $E_9=\binom{A}{3}\cup \binom{B}{3}$. Let $G_9=(V_9,E_9)$ be a 3-graph. Actually, we have  $G_9=K_{2k}^3\cup K_{n-2k}^3.$
	\end{Constrution}
		\begin{ob} \label{k2kob}
	$G_9$ is a $3$-graph with $\delta_1(G_9)=\binom{2k-1}{2}$ and $G_9$ has no $P_k$-covering.	
	\end{ob}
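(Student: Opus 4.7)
The plan is to verify the two claims separately: first the minimum degree equality, then the absence of a $P_k$-covering.

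For the minimum degree, I would partition by which side of the bipartition a vertex lies in. Since $G_9 = K_{2k}^3 \cup K_{n-2k}^3$ and $A$, $B$ are disjoint with no crossing edges, the degree of a vertex depends only on its side. A vertex $v \in A$ has $d_{G_9}(\{v\}) = \binom{|A|-1}{2} = \binom{2k-1}{2}$, and a vertex $v \in B$ has $d_{G_9}(\{v\}) = \binom{n-2k-1}{2}$. The assumption $n \geq 4k$ gives $n-2k-1 \geq 2k-1$, so the $B$-degree dominates and $\delta_1(G_9) = \binom{2k-1}{2}$.

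For the covering claim, the key observation is that $P_k$ has exactly $2k+1$ vertices (since each new edge after the first adds two vertices to the $3$ initial ones). Pick any vertex $u \in A$. Since $E_9 \subseteq \binom{A}{3} \cup \binom{B}{3}$, any edge containing $u$ lies inside $A$, and inductively any copy of $P_k$ containing $u$ must have all its vertices inside $A$ (one can never cross to $B$ through an edge). But $|A| = 2k < 2k+1$, so no such copy of $P_k$ exists, proving that $u$ is uncovered.

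I do not foresee any substantive obstacle: both parts reduce to direct counting together with the trivial fact that the two cliques are vertex-disjoint and share no edges. The only small subtlety is making explicit that $P_k$ has $2k+1$ vertices and that any subhypergraph lying partly in $A$ and partly in $B$ cannot be connected, so a copy of $P_k$ meeting $A$ is necessarily contained in $A$.
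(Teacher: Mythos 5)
Your proposal is correct and follows essentially the same route as the paper: compute the degrees on each side of the partition and use that a copy of $P_k$, being connected on $2k+1$ vertices, cannot fit inside the $2k$-vertex clique $A$ nor cross into $B$. In fact your degree count $\binom{n-2k-1}{2}$ for a vertex of $B$ is the accurate one (the paper writes $\binom{n-2k}{2}$, a harmless slip), and your explicit remark that $|V(P_k)|=2k+1>|A|$ just spells out what the paper leaves implicit.
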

	\begin{proof}
		As $|A|=2k$, there is no $P_k$ in the induced graph $G[A]$. And $A$ and $B$ are disconnected, so there is no $P_k$ covering the vertices in $A$.\par 
		Now we prove that $\delta_1(G_9)=\binom{2k-1}{2}$. Let $v$ be a vertex in $V(G_9)$.\par
		If $v\in A$, then $$d_{G_9}(\{v\})= \binom{2k-1}{2}.$$\par 
		If $v\in B$, as $n\geq 4k$ we have $$d_{G_9}(\{v\})=\binom{n-2k}{2}\geq \binom{2k-1}{2} .$$\par 
		Therefore, we have $\delta_1(G_9)=\binom{2k-1}{2}$.
		
	\end{proof}

\subsubsection{The proof of Theorem \ref{p32 exact}}
	The lower bound of $c_2(n,P_3)$ is a direct corollary of Observation \ref{p2lower bound}. Therefore, it is sufficient to show that every $3$-graph $H$ on $n$ vertices with $\delta_2(H)\geq 2$ has a $P_3$-covering.
\par 
Suppose to the contrary that there is a $3$-graph $H$ on $n$ vertices with $\delta_2(H)\geq 2$ and a vertex $u\in V(H)$ that is not contained in a copy of $P_3$. As $\delta_2(H)\geq 2$, by Theorem \ref{pp2} we have for every vertex $v_0\in V(H)$ there is a $P_2$ with the center $v_0$ covering it. Let $P_2$ be such a linear $2$-path in $H$ with the vertex set $\{u,v_1,v_2,v_3,v_4\}$ and the edge set $\{\{uv_1v_2\},\{uv_3v_4\}\}$. We denote $V(H)\setminus \{u,v_1,v_2,v_3,v_4\}$ by $A$. Let $v_5$ and $v_6$ be any two vertices in $A$. Then any vertex in $\{v_1,v_2,v_3,v_4\}$ can not be the co-neighbor of $\{v_5,v_6\}$. Otherwise, without loss of generality let $v_4$ be a co-neighbor of $\{v_5,v_6\}$. We find there is a linear $3$-path $P_3$ with the edge set $\{\{v_1v_2u\},\{uv_3v_4\},\{v_4v_5v_6\}\}$ covering $u$, a contradiction. Hence we have $N_H(\{v_5v_6\})\subseteq A\cup \{u\}.$ As $\delta_2(H)\geq 2$, there is at least one co-neighbor in $A$. Let $v_7\in A$ be a co-neighbor of $\{v_5v_6\}$.
\par
Consider the co-neighbors of $\{v_4v_5\}$. As $v_4$ is not the co-neighbor of $\{v_5,v_6\}$, we have  $N_H(\{v_4v_5\})\subseteq \{v_1,v_2,v_3,u\}.$ If $v_1$ or $v_2$ is a co-neighbor of $\{v_4v_5\}$, then we assume without loss of generality that $v_1$ is a co-neighbor of $\{v_4v_5\}$. Then we  find a linear $3$-path $P_3$ with the edge set $\{\{uv_1v_2\},\{v_1v_4v_5\},\{v_5v_6v_7\}\}$ covering $u$, a contradiction. If neither $v_1$ nor $v_2$ is a co-neighbor of $\{v_4v_5\}$, then we have $N_H(\{v_4v_5\})\subseteq \{v_3,u\}$. By Theorem \ref{pp2} we have $\{v_1,v_2,v_4\}$ must be an edge in $H$. Otherwise, there is no $P_2$ with the center $u$. Hence we  find a linear $3$-path $P_3$ with the edge set $\{\{v_1v_2v_4\},\{v_4uv_5\},\{v_5v_6v_7\}\}$ covering $u$, a contradiction.
\par 
Therefore, we have $c_2(n,P_3)=1$ for $n\geq 8$.

\subsubsection{The proof of Theorem \ref{p31}}
We first prove a lemma before we prove Theorem \ref{p31}.
\begin{lem} \label{c1 p2 center}
		If $G$ is an $n$-vertex $3$-graph satisfying that $n\geq 5$ and $\delta_1(G)\geq n-1$, then for any vertex $u\in V(G)$, we can find $4$ vertices $p,q,s,t$ where $\{u,p,q\}$ and $\{u,s,t\}$ form a linear $2$-path $P_2$ covering $u$. 
\end{lem}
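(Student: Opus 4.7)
The plan is to translate the statement into a $2$-matching problem in the link graph of $u$ and then invoke the Erd\H{o}s--Gallai bound (Theorem \ref{kkkk}) with $k=2$. The key observation is that finding four distinct vertices $p,q,s,t$ with $\{u,p,q\},\{u,s,t\}\in E(G)$ is, by the definition of the link graph, exactly the same as finding two vertex-disjoint edges $\{p,q\},\{s,t\}$ in the simple graph $G_u$ on $n-1$ vertices.

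Next I would collect the numerical input. Since $\delta_1(G)\geq n-1$, we have $e(G_u)=d_G(u)\geq n-1$, while $|V(G_u)|=n-1$. If $G_u$ had no $2$-matching, then Theorem \ref{kkkk} applied with $k=2$ to the $(n-1)$-vertex graph $G_u$ would give
\[
e(G_u)\leq \max\Big\{\binom{3}{2},\ \binom{n-1}{2}-\binom{n-2}{2}\Big\}=\max\{3,\,n-2\}.
\]
Since $n\geq 5$, the right-hand side equals $n-2$, contradicting $e(G_u)\geq n-1$. Hence $G_u$ contains a $2$-matching, and the corresponding two edges $\{u,p,q\},\{u,s,t\}$ in $G$ form the desired $P_2$ centered at $u$.

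I do not anticipate a genuine obstacle here: the conclusion is essentially an immediate consequence of the Erd\H{o}s--Gallai matching bound already cited in the paper. The only slightly delicate point is the elementary comparison $n-2\geq 3$ for $n\geq 5$, which ensures that the relevant term in the Erd\H{o}s--Gallai bound is $n-2$ rather than the constant $3$; were $n$ allowed to be smaller, one would have to handle the tiny cases by hand, but the hypothesis $n\geq 5$ exactly avoids this.
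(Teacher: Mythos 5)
Your proof is correct and follows essentially the same route as the paper: pass to the link graph $G_u$, note $e(G_u)\geq n-1$ exceeds the Erd\H{o}s--Gallai bound $\max\{3,n-2\}$ for graphs on $n-1$ vertices with no $2$-matching, and lift the resulting $2$-matching back to a $P_2$ centered at $u$. Your write-up is in fact slightly cleaner, since you make explicit the comparison $n-2\geq 3$ that the paper's displayed inequality glosses over.
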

\begin{proof}[The proof of the Lemma \ref{c1 p2 center}]
Let $u$ be any vertex in $V(G)$. Let $G_u$ be the link graph of $u$. As $\delta_1(G)\geq n-1$, we have: $$e(G_x)\geq n-1>\max \{\binom{4-1}{2},\binom{n-1}{2}-\binom{n-2}{2}\}.$$
By Theorem \ref{kkkk}, there is a $2$ matching in $G_u$. We assume without loss of generality that $\{p,q\}$ and $\{s,t\}$ form a $2$-matching in $G_u$. Then we find a $P_2$ with the edge set $\{\{u,p,q\},\{u,s,t\}\}$ covering $u$, which ends our proof.

\end{proof}

Now we begin to prove Theorem \ref{p31}.

\begin{proof}[The proof of Theorem \ref{p31}]

The lower bound of $c_1(n,P_3)$ is a direct corollary of Observation \ref{p2lower bound}. Therefore, it is sufficient to show that every $3$-graph $H$ on $n$ vertices with $n\geq 9$ and $\delta_1(H)\geq n+5$ has a $P_3$-covering.
\par 
Suppose to the contrary that there is a $3$-graph $H$ on $n$ vertices with $n\geq 9$ and $\delta_1(H)\geq n+5$ and a vertex $u\in V(H)$ that is not contained in a copy of $P_3$. As $\delta_1(H)\geq n-1$, by Lemma \ref{c1 p2 center} we have there is a $P_2$ with the center $u$ covering it. Let $P_2$ be  such a linear $2$-path in $H$ with the vertex set $\{u,v_1,v_2,v_3,v_4\}$ and the edge set $\{\{uv_1v_2\},\{uv_3v_4\}\}$. We denote the vertex set $V(H)\setminus \{u,v_1,v_2,v_3,v_4\}$ by $A$. Then any two vertices in $A$ has no co-neighbor in $\{v_1,v_2,v_3,v_4\}$. Otherwise, without loss of generality we assume $v_1$ is a co-neighbor of $\{v_5v_6\}$ with $v_5,v_6\in A$. Then there is a $P_3$ with the edge set $\{\{v_5v_6v_1\},\{v_1v_2u\},\{uv_3v_4\}\}$ covering $u$, a contradiction.
\par 
If there is a vertex $v$ in $\{v_1,v_2,v_3,v_4\}$ such that $\{uv\}$ has a co-neighbor in $A$, then we assume without loss of generality that $v_5\in A$ is the co-neighbor of $\{uv_1\}$. Let $H_{v_5}$ be the link graph of $v_5$. As $\delta_1(H)\geq n+5$, we have $e(H_{v_5})\geq n+5$. In $H_{v_5}$, there are at most $n-6$ edges between $A$ and $\{u,v_1,v_2,v_3,v_4\}$. And $\{u,v_1,v_2,v_3,v_4\}$ can span at most $\binom{5}{2}$ edges. Hence there is at least one edge induced in $A$. Let it be $\{v_6v_7\}$. Then we find a $P_3$ with 
the edge set $\{\{uv_1v_5\},\{uv_3v_4\},\{v_5v_6v_7\}\}$ covering $u$, a contradiction.
\par 

If there is no vertex $v$ in $\{v_1,v_2,v_3,v_4\}$ such that $\{uv\}$ has a co-neighbor in $A$, then there must exist $v_8,v_9\in A$ such that $\{uv_8v_9\}$ is an edge in $H$ as $\delta_1(H)\geq n+5>\binom{4}{2}$. Let $H_{v_1}$ be the link graph of $v_1$. 
\par 
If there is a vertex $a$ in $A$ such that $\{a,v_3\}$ or $\{a,v_4\}$ is an edge in $H_{v_1}$, then we assume without loss of generality that $\{a,v_4\}$ is an edge in $H_{v_1}$. Considering the position of $a$ in $A$, we have:
\begin{itemize}
	\item If $a$ is a different vertex form $v_8$ and $v_9$ in $A$, then we find a $P_3$ with the edge set $\{\{av_1v_4\},\{v_4v_3u\},\{uv_8v_9\}\}$ covering $u$, a contradiction.
	\item If $a$ is $v_8$ or $v_9$, then we can assume $a$ is $v_8$. Let $H_{v_8}$ be the link graph of $v_8$. As $\delta_1(H)\geq n+5$, we have $e(H_{v_8})\geq n+5$. In $H_{v_8}$, there are at most $n-6$ edges between $A$ and $\{u,v_1,v_2,v_3,v_4\}$. And $\{u,v_1,v_2,v_3,v_4\}$ can span at most $\binom{5}{2}$ edges. Hence, in $H_{v_8}$, there is at least one edge induced in $A$. Let it be $\{v_{10}v_{11}\}$. Then we find a $P_3$ with 
	the edge set $\{\{uv_1v_2\},\{uv_8v_9\},\{v_8v_{10}v_{11}\}\}$ covering $u$, a contradiction.
\end{itemize}

If there is no vertex $a$ in $A$ such that $\{a,v_3\}$ or $\{a,v_4\}$ is an edge in $H_{v_1}$, then $\{v_1v_3v_4\}$ must be an edge in $H$ as there must be at least one $P_2$ with the center $v_1$. Then we find a $P_3$ with the edge set $\{\{v_8v_9u\},\{uv_2v_1\},\{v_1v_3v_4\}\}$ covering $u$, a contradiction.
\par 
In conclusion, we have  $n-2 \leq c_2(n,P_3)\leq n+4$ for $n\geq 9$.

\end{proof}

\subsubsection{The proof of Theorem \ref{p32 position 2}}
	Let $H$ be an $n$-vertex $3$-graph with $n\geq 8$ and $\delta_2(H)\geq 3$. Let $u$ be any vertex in $V(H)$. By Theorem \ref{S3 center cover}, we have there is a $S_3$ with the center $u$ covering $u$. Let $V(S_3)$ be $\{u,v_1,v_2,v_3,v_4,v_5,v_6\}$ and $E(S_3)$ be $\{\{uv_1v_2\},\{uv_3v_4\},\{uv_5v_6\}\}$. Let $v_7$ be a vertex in $V(H)\setminus\{u,v_1,v_2,v_3,v_4,v_5,v_6\}$.
As $\delta_2(H)\geq 3$, we have $d_{H}(\{v_6v_7\})\geq 3$. Hence there is at least one co-neighbor of $\{v_6v_7\}$ in $V(H)\setminus\{u,v_5,v_6,v_7\}$. If there is a co-neighbor of $\{v_6v_7\}$ in $V(H)\setminus\{u,v_1,v_2,v_3,v_4,v_5,v_6,v_7\}$, let $v_8\in V(H)\setminus\{u,v_1,v_2,v_3,v_4,v_5,v_6,v_7\}$ be a co-neighbor of $\{v_6v_7\}$. Then we find a $P_3$ with the edge set $\{\{uv_1v_2\},\{uv_5v_6\},\{v_6v_7v_8\}\}$ covering $u$. If there is a  co-neighbor of $\{v_6v_7\}$ in $\{v_1,v_2,v_3,v_4\}$, without loss of generality let $v_1$ be a co-neighbor of $\{v_6v_7\}$. Then we find a $P_3$ with the edge set $\{\{uv_3v_4\},\{uv_5v_6\},\{v_6v_7v_1\}\}$ covering $u$.
\par 
Hence, If $H$ is an $n$-vertex $3$-graph with $n\geq 8$ and $\delta_2(H)\geq 3$, then for any vertex $u\in V(H)$ we can find a $P_3$ with the vertex set $\{u,v_1,v_2,v_3,v_4,v_5,v_6\}$ and the edge set $\{\{uv_1v_2\},\{uv_3v_4\},\{v_4v_5v_6\}\}$ covering $u$.

\subsubsection{The proof of Proposition \ref{pk2}}

We use the same method of Lemma 2.1 in \cite{FZC} to prove $(i)$ in Proposition \ref{pk2}.
\begin{proof}[Proof of $(i)$ in Proposition \ref{pk2}]
		The lower bound of $c_2(n,P_k)$ is a direct corollary of Observation \ref{lower pk2}. Therefore, it is sufficient to show that $c_2(n,P_k)\leq 2k-2$. We prove that if $H$ is an $n$-vertex $3$-graph with $n\geq 2k+1$ and $\delta_2(H)\geq 2k-1$, then for any vertex we can find a linear $k$-path $P_k$ covering it.\par 
For $k\geq 4$, we order the vertices of $P_k$ as $x_0,x_1,x_2,...,x_{2k-1},x_{2k}$ such that the edge set of $P_k$ is $\{\{x_0x_1x_2\},\{x_3x_4x_5\},...,\{x_{2k}x_{2k-1}x_{2k}\}\}$. 
Let $H$ be an $n$-vertex $3$-graph with $n\geq 2k+1$ and $\delta_2(H)\geq 2k-1$. Fix a vertex $v_0$ in $V(H)$. We can find a copy of $P_k$ by mapping $x_0$ to $v_0$, $x_1$ to any other vertex $v_1$ in $V(H)$, and $x_2$ to any $v_2\in N_H(\{v_0v_1\})$. Suppose that $x_0,...,x_{i-1}$ has been embedded to $v_0,...,v_{i-1}$. Considering the embedding of $x_i$ for $i\leq 2k$, if $i$ is odd, then we embedded $x_i$ to any vertex $v_i\in V(H)\setminus \{v_0,v_1,...,v_{i-1}\}$. If $i$ is even, as $\delta_2(H)\geq 2k-1$,  $\{v_{i-2}v_{i-1}\}$ has at least $2k-1$ co-neighbors. Hence $\{v_{i-2}v_{i-1}\}$ has at least one co-neighbor in $V(H)\setminus \{v_0,v_1,...,v_{i-1}\}$, let it be $v_i$. Then we embed $x_i$ to $v_i$. Continuing this process, we obtain a copy of $P_k$ when we embed the $2k+1$ vertices. 
\par 
Hence, if $H$ is an $n$-vertex $3$-graph with $n\geq 2k+1$ and $\delta_2(H)\geq 2k-1$, then for any vertex we can find a linear $k$-path $P_k$ covering it. As a direct corollary, we have 	
$c_2(n,P_k)\leq 2k-2$ for $n\geq 2k+1$ and $k\geq 4$.

\end{proof}

\begin{proof}[Proof of $(ii)$ in Proposition \ref{pk2}]
	The lower bound of $c_1(n,P_k)$ is a direct corollary of Observation \ref{p2lower bound} and Observation \ref{k2kob}. Therefore, it is sufficient to show that $c_1(n,P_k)\leq \binom{n-1}{2}-\binom{n-2k+1}{2}$. We prove that if $H$ is an $n$-vertex $3$-graph with $n\geq 4k$ and $\delta_1(H)\geq \binom{n-1}{2}-\binom{n-2k+1}{2}+1$, then for any vertex we can find a linear $k$-path $P_k$ covering it.\par 
	For any vertex $u$, suppose we have found a linear $i$-path $P_i$ with $1\leq i\leq k-1$ containing $u$. Now we prove that we can extend this $P_i$ to  $P_{i+1}$. We assume that the vertex set of $P_i$ is $\{v_1,...,v_{2i+1}\}$ and the edge set of $P_i$ is $\{\{v_1v_2v_3\},\{v_3v_4v_5\},...,\{v_{2i-1}v_{2i}\\v_{2i+1}\}\}$. As $\delta_1(H)\geq \binom{n-1}{2}-\binom{n-2k+1}{2}+1$, we have $d_H(\{v_{2i+1}\})\geq \binom{n-1}{2}-\binom{n-2k+1}{2}+1$. Hence there must be two vertices $v_{2i+2}$ and $v_{2i+3}$ in $V(H)\setminus \{v_1,...,v_{2i+1}\}$ such that $\{v_{2i+1}v_{2i+2}v_{2i+3}\}$ is an edge in $H$. Then we find a linear $(i+1)$-path $P_{i+1}$ by adding $\{v_{2i+1}v_{2i+2}v_{2i+3}\}$ to $P_i$. When $i=k-1$, we can get a linear $k$-path $P_{k}$ covering $u$.\par 
	Hence we have for $n\geq 4k$ and $k\geq 3$, $\max \{n-2, \binom{2k-1}{2}\}\leq c_1(n,P_k)\leq \binom{n-1}{2}-\binom{n-2k+1}{2}.$
	
	\end{proof}
\section{Acknowledgments}
	Ran Gu was partially supported by
	National Natural Science Foundation of China (No. 11701143).


\begin{thebibliography}{1}
		\bibitem{FZC}
		V. Falgas-Ravry and Y. Zhao, Codegree thresholds for covering 3-uniform hypergraphs, SIAM J.Discrete Math., 30(4)(2016),1899-1917.
		\bibitem{FMZT}
		V. Falgas-Ravry, K. Markstr\"{o}m and Y. Zhao, Triangle-degrees in graphs and tetrahedron coverings in 3-graphs. Combinatorics, Probability and Computing, 30(2)(2021), 175-199.
		\bibitem{YHMLE}
		L. Yu, X. Hou, Y. Ma and B. Li, Exact minimum codegree thresholds for $K_4^-$-covering and $K_5^-$-covering, The Electronic Journal of Combinatorics, 27(3)(2020), P3.22. 
		\bibitem{TMH}
		Y. Tang, Y. Ma and X. Hou, The degree and codegree threshold for linear tringle covering in 3-graphs, arxiv:2212.03718.
		\bibitem{FTDR}
		A. Freschi and A. T. Dirac-type results for tilings and coverings in ordered graphs. Forum of Mathematics, Sigma, Vol.10, Issue. 2022.
		\bibitem{FZS}
		Z. F\"{u}redi and Y. Zhao, Shadows of 3-Uniform Hypergraphs under a Minimum Degree Condition. SIAM Journal on Discrete Mathematics, 36(4)(2022), 2453-3057. 
		\bibitem{ZM}
		C. Zhang, Matching and tilings in hypergraphs, PhD thesis, Georgia State University, 2016.
		\bibitem{PTP}
		P. Erd\H{o}s and T. Gallai, On maximal paths and circuits of graphs, Acta Mathematica Hungarica, 10(3)(1959), 337-356.
	\end{thebibliography}
\end{document}